  \numberwithin{equation}{section} 
\newcommand{\N}{\mathbb N}
\newcommand{\R}{\mathbb R}
\newcommand{\Z}{\mathbb Z}
\newcommand{\E}{\mathbb E}
\newcommand{\T}{\mathbb T}
\renewcommand{\S}{\mathbb S}
\newcommand{\cE}{\mathcal E}
\renewcommand{\P}{\mathbb P}
\newcommand{\Path}{\mathcal{P}}
\newcommand{\1}[1]{\mathbbm{1}_{\left\{#1\right\}}}
\newcommand{\Var}{\mathrm{Var}}
\newcommand{\e}{{\rm e}}
\newcommand{\pp}{\mathrm P}
\newcommand{\ep}{\mathrm E}
\newcommand{\diam}{\mathrm{diam}}
\newcommand{\sG}{\overline{G}}
\newcommand{\sD}{ \overline{D}}
\newcommand{\spi}{ \overline \pi}
\newcommand{\tmix}{ t_{\rm mix}}
\newcommand{\jmax}{j_{\max}}%{j_{{}_{\rm {MAX}}}}
\newcommand{\imax}{i_{{}_{\max}}}
\newcommand{\jbar}{\overline j}
\newcommand{\jbarbar}{{\widehat j}}
\newcommand{\tGamma}{{\widetilde{\Gamma}}}
\newcommand{\tD}{{\widetilde{D}}}
\newcommand{\ttau}{{\tilde \tau}}
\newcommand{\talpha}{{\tilde \alpha}}
\newcommand{\tgamma}{{\tilde \gamma}}
\newcommand{\tW}{{\widetilde W}}
\newcommand{\Wmax}{   W_{{}_{\max}}}
\newcommand{\cR}{\mathcal R}
\def\be{\begin{eqnarray}}
\def\ee{\end{eqnarray}}
\def\ben{\begin{eqnarray*}}
\def\een{\end{eqnarray*}}
\newtheorem{proposition}{Proposition}[section]
\newtheorem{lemma}[proposition]{Lemma}
\newtheorem{theorem}[proposition]{Theorem}
\newtheorem{remark}[proposition]{Remark}
\newtheorem{observation}[proposition]{Observation}
\newtheorem{conjecture}[proposition]{Conjecture}
\newtheorem*{assumptionA2'}{Assumption A2'}
\newcounter{example}
\begin{document}
%% Titolo e autori vanno dopo begin{document}
\title[Scale-free percolation mixing time]{Scale-free percolation mixing time}

\author[A.\ Cipriani]{Alessandra Cipriani}
\address{{TU Delft} (DIAM), Mekelweg 4, 2628 CD, Delft, The Netherlands}

\email{a.cipriani@tudelft.nl}

\author[M.\ Salvi]{Michele Salvi}
\address{Dipartimento di Matematica, Universit\`a di Roma Tor Vergata, Via della ricerca scientifica 1, 00133, Rome, Italy}
\email{salvi@mat.uniroma2.it} 
\urladdr{\url{https://www.mat.uniroma2.it/~salvi/}}

\begin{abstract}
%	Consider the one-dimensional torus of size $N$ and assign to each vertex independently a random weight with a heavy tail of parameter $\tau-1>0$. Connect then each couple of vertices on the torus with a probability which is, roughly, proportional to the product of their weights but that decays polynomially with exponent $\alpha>0$ in their distance. The resulting graph is called scale-free percolation. The goal of this work is to study the mixing time of the simple random walk on this structure. We depict a  phase diagram in $\alpha$ and $\tau$. Calling $\gamma=\alpha(\tau-1)$ we show that the mixing time behaves as follows, up to correcting factors: when $\gamma<1$, it is logarithmic in $N$; when $1<\gamma<2$ and $\tau>2$, it is of order $N^{\gamma-1}$; when $1<\alpha<2$ and $\gamma>2$, it is of order $N^{\alpha-1}$ or larger; when $\alpha>2$ and $\gamma>2$ it jumps to order $N^2$. 
%	This shows that the presence of nodes with a very high degree can speed up the mixing of the chain. We use different techniques for each phase, 
%	the most interesting of which is a bootstrap procedure to reduce the model with $1<\gamma<2$ and $\tau>2$ to the case $\gamma<1$.
Assign to each vertex of the one-dimensional torus i.i.d. weights with a heavy-tail of index $\tau-1>0$. Connect then each couple of vertices with probability roughly proportional to the product of their weights and that decays polynomially with exponent $\alpha>0$ in their distance. The resulting graph is called scale-free percolation. The goal of this work is to study the mixing time of the simple random walk on this structure. We depict a rich phase diagram in $\alpha$ and $\tau$. %Calling $\gamma=\alpha(\tau-1)$ we show that the mixing time behaves as follows, up to correcting factors: when $\gamma<1$, it is logarithmic in $N$; when $1<\gamma<2$ and $\tau>2$, it is of order $N^{\gamma-1}$; when $1<\alpha<2$ and $\gamma>2$, it is of order $N^{\alpha-1}$ or larger; when $\alpha>2$ and $\gamma>2$ it jumps to order $N^2$. 
	In particular we prove that the presence of hubs can speed up the mixing of the chain. We use different techniques for each phase, the most interesting of which is a bootstrap procedure to reduce the model from a phase where the degrees have bounded averages to a setting with unbounded averages.
	
	\noindent  \emph{AMS  subject classification (2010 MSC)}: 
	05C80 %random graphs
	%05C63 %infinite graphs
	60K37, %processes in random environments
	%82C22, %Interacting particle systems
	%60Fxx, %Limit theorems
	%82D30.%	Random media, disordered materials (including liquid crystals and spin glasses)
	05C82  	%Small world graphs, complex networks
	05C90  	%Graph theory -> Applications 
	
	\noindent
	\emph{Keywords}: random graph, mixing time, scale-free percolation, degree distribution.
\end{abstract}

\maketitle

%\tableofcontents

\section{Introduction}
In this article we study the mixing time of the simple random walk on a scale-free percolation random graph defined on the one-dimensional torus. 
\subsection{Spatial random graphs and scale-free percolation}
We say that a graph is {\it spatial} if its vertices occupy a position in a given metrical space. 
It is reasonable to believe that spatial random graphs are good candidates when one tries to model real-world networks where  agents have a geographical or physical position (commercial or social networks, telecommunications, brain cells...). This intuition has been confirmed by the fact that some of these models exhibit properties that are often observed in data: for example, it is often the case that the nodes of the network have a degree distribution with polynomial tails (the network is said to be {\it scale-free}), the nodes are separated by a relatively low number of edges (the network is said to be {\it small-world}) and they are highly clustered. While physicists have been studying this kind of networks for some time (see e.g.~the review \cite{B11}), mathematicians have begun to prove rigorous results on these models only in recent years. 
\smallskip

The scale-free percolation random graph (from now on SFP)  falls into the category of {\it inhomogeneous} spatial models: not only does the probability of linking two vertices of the graph depend on their position in space, but also on a random importance, or weight, that is assigned to each of them. %Furthermore, SFP is an inhomogeneous model, that is, each node has a different random importance, or weight.
SFP can be considered a combination of long-range percolation (a random graph where the probability of linking two nodes decays, roughly, polynomially in their distance, ~\cite{S83}) and inhomogeneous random graphs, such as the Norros-Reittu model (where nodes with a high weight are likelier to be linked, \cite{NR06}). Since its introduction in \cite{DVH13} SFP has been the object of intense research. In particular, the scale-free property has been proved for the discrete-space model, where the nodes lay on $\Z^d$, in \cite{DVH13}. In its continuum counterpart, where the position of the nodes is given by a Poisson point process, the graph has been shown to be scale-free in an annealed sense in \cite{DW18} and in a quenched sense in \cite{DS21}. The convergence in distribution for the maximum of the degrees on a growing observation window has been studied in \cite{BS19}. The problem of graph distances in SFP has been addressed in the original paper for discrete-space and in \cite{DW18} for the continuum. Depending on the parameters of the model, SFP can exhibit the small-world property (the graph distance between vertices behaves asymptotically as the $\log$ of their Euclidean distance), the ultra-small-world property (the graph distance is $\log\log$ of the Euclidean distance) or can be comparable to Euclidean distances. A good deal of effort is being put in finding the precise order of these distances, see \cite{DHW15, HK17, HH21}. Finally, in \cite{DS21} the authors show the positivity of the clustering coefficient for continuous SFP. 

\smallskip

Among the other few spatial models for which these properties have been proved to various extents, we mention the ultra-small scale-free geometric network~\citep{Y06}, the hyperbolic
random graph~\citep{GPP12}, the spatial preferential attachment model~\citep{JM15}, the age-dependent random connection model~\citep{GGLM19} and the geometric inhomogeneous random graph~\citep{BKL19}.
As noted in \cite{GHMM19}, most of these models can be thought as particular cases of the more general weight-dependent random connection model. To further confirm our original motivation, we point out that 
some of these random graphs have been proposed to model real-world networks such as the Internet \citep{PKBV10}, banking systems \citep{DHW15} and livestock trades \citep{DS21}. 

\subsection{Stochastic processes on spatial inhomogeneous random graphs}
Motivated by applications (the spread of fake news on social media, the outbreak of an epidemics, the diffusion of a computer virus...), we take a step further and look at stochastic processes that evolve {\it over} inhomogeneous spatial networks. 	
The interest of the mathematical community on this topic is quite recent and a few references are available. Among them we find \cite{CF16} and \cite{KL16}, where the authors study bootstrap percolation on the hyperbolic random graph and on the geometric inhomogeneous random graph respectively; \cite{KL20}, dealing with first passage percolation on different graph models; \cite{JM17}, about a push\&pull protocol on the spatial preferential attachment.

\smallskip

One of the most basic and studied processes on a graph is the simple random walk, where at each time-step a particle moves from its current location to any neighboring vertex with equal probability.
As far as we could check, the only available results for the simple random walk on spatial inhomogeneous random graphs  are the analysis of 
transience and recurrence for weight-dependent random connection models in \cite{GHMM19} and
for SFP in \cite{HHJ17}. %, where the authors also identify the underlying hierarchical structure of SFP. 
%In the present article we study the {\it mixing time} of the simple random walk on SFP. 
The {\it mixing time} of the simple random walk is, roughly put, the time needed for the distribution of the chain to approach its invariant measure. 
Quantifying the mixing time of a Markov chain is
of primary importance, for example, for its connection with the spectral gap of the chain (see~Remark \ref{yri}) and in computer science for sampling through Monte Carlo procedures.
We refer to \cite{LP17} for a complete account on the subject. %
We could find very few works where the random walk mixing time is studied for models that go beyond lattices or graphs without an underlying geometry. \cite{BBY08} and \cite{CS12} deal with long-range percolation in dimension $d=1$ and $d\geq 2$ respectively, while \cite{DGGJV20} analyzes another closely related model. %To our knowledge, the present paper is the first attempt at studying the mixing time on an inhomogeneous spatial random graph.

\subsection{Our contribution}
In the present paper we address the problem of finding the order of the mixing time for the simple random walk on an SFP constructed on the one-dimensional torus of size $N$. To our knowledge, this is the first time the mixing time is analyzed for a walk on an {\it inhomogeneous} spatial random graph.
 The graph, called $G_N$, is built as follows: to each node in $\T_N:=\{1,\dots,N\}$ we assign independent weights $(W_x)_{x\in\T_N}$ following a Pareto distribution of parameter $\tau-1$, with $\tau>1$. Once we have fixed the weights, we add an edge between node $x$ and node $y$ with probability 
\begin{align*}
1-\exp\{-W_xW_y\|x-y\|^{-\alpha}\}
\end{align*}
where $\|\cdot\|$ is the torus-distance and 
where $\alpha>0$ is the parameter which tunes the influence of the distance between the nodes over the linking probability. So $\alpha$ and $\tau$ are the two parameters of the model, and we call  $\gamma:=\alpha(\tau-1)$. It is possible to show that the degrees of the nodes have a heavy tail of parameter $\gamma$, see \cite{DVH13}. We consider then a lazy simple random walk on $G_N$ and study its mixing time $\tmix(G_N)$, see Section \ref{orizzonti} for a precise definition.

\smallskip

We are inspired by \cite{BBY08}, who studied the same problem for the long-range percolation random graph on $\T_N$. Long-range percolation is equivalent to a version of SFP where all the weights are set equal to $1$, which morally corresponds to the case $\tau=\infty$. The authors of \cite{BBY08} prove that the simple random walk on long-range percolation undergoes a phase transition in the parameter $\alpha$: %when $\alpha<1$, the mixing time is logharitmic in $N$; 
when $1<\alpha<2$, the mixing time is of order
$N^{\alpha-1}$, whereas for $\alpha>2$ it is of order $N^2$ (all up to polylogarithmic factors). Note in particular the remarkable discontinuity of the exponent at $\alpha=2$.

\smallskip

For SFP we depict an almost complete phase diagram in $\alpha$ and $\tau$ with a rich variety of phases. Up to correcting factors, we show that $\tmix(G_N)$ behaves as follows, cfr. Figure \ref{fig:fd_mix}:  

\smallskip

\begin{enumerate}[label=(\roman*),ref=(\roman*)]
	\item \label{case1} for $\gamma<1$, $\tmix(G_N)$ is upper bounded by a power of $\log N$ (Theorem \ref{gammino}).
	
	\smallskip
	
	\item \label{case2} For $1<\gamma<2$  and $\tau<2$, $\tmix(G_N)$ is of order $N^{\gamma-1}$ (Theorem \ref{thm:gamma_tra_12}).
	
	\smallskip
	
	\item \label{case3} For $\alpha\in(1,2)$ and $\tau>2$, $\tmix(G_N)$ is at least of order $N^{\alpha-1}$ (Theorem \ref{strn}).
	
	\smallskip
	
	\item \label{case4} For $\alpha>2$ and $\gamma>2$, $\tmix(G_N)$ is of order $N^2$ (Theorem \ref{brng}).
\end{enumerate}
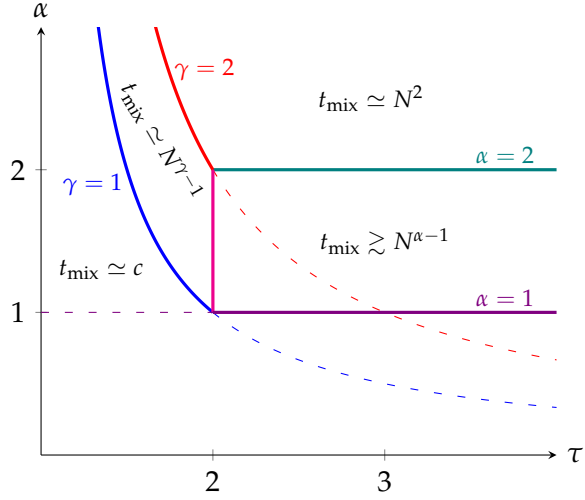
\begin{figure}[ht!]
	\begin{center}
		\begin{tikzpicture}
		\begin{axis}[
		axis lines = left,
		xlabel = $\tau$,
		ylabel = {$\alpha$},
		every axis x label/.style={	at={(ticklabel* cs:1.00)},anchor=west,},
		every axis y label/.style={ at={(ticklabel* cs:1.00)}, anchor=south,},
		xmin=0, xmax=3, ymin=0, ymax=3, xtick={1,2}, ytick={1,2}, xticklabels={$2$,$3$}
		]
		
		\addplot [domain=1:3, samples=100, color=red,style={loosely dashed}]{2/x};
		\addplot [domain=0:1, samples=100, color=blue,style=very thick]{1/x};
		\addplot [domain=1:3, samples=100, color=blue,style=loosely dashed,]{1/x};
		\addplot [domain=1:3, samples=100, teal, very thick]{2};
		\addplot[samples=20, magenta, very thick] coordinates {(1,1)(1,2)};
		\addplot [domain=0:1, samples=100, color=red,style=very thick,smooth]{2/x};
		\addplot [domain=1:3, samples=100, color=violet,style=very thick,]{1};
		\addplot [domain=0:1, samples=10, color=violet,style=loosely dashed,]{1};
		%  \node[] at (axis cs: 0.18,2.7) {\color{blue}\footnotesize $\gamma=1$};
		\node[] at (axis cs: 0.3,1.9) {\color{blue}\footnotesize $\gamma=1$};
		%\node[] at (axis cs: 0.94,2.7) {\color{red}\footnotesize $\gamma=2$};
		\node[] at (axis cs: 0.95,2.7) {\color{red}\footnotesize $\gamma=2$};
		\node[] at (axis cs: 2.7,2.1) {\color{teal}\footnotesize $\alpha=2$};
		\node[] at (axis cs: 2.7,1.1) {\color{violet}\footnotesize $\alpha=1$};  
		\node[] at (axis cs: 1.92,2.5) {\footnotesize $t_{\rm mix}\simeq N^2$};
		\node[] at (axis cs: 2,1.5) {\footnotesize $t_{\rm mix}\gtrsim N^{\alpha-1}$};
		\node[] at (axis cs: 0.35,1.3) {\footnotesize $t_{\rm mix}\simeq c$};
		\node[rotate=300] at (axis cs: 0.70,2.2) {\footnotesize $t_{\rm mix}\simeq N^{\gamma-1}$};
		\end{axis}
		\end{tikzpicture}
	\end{center}
	\caption{Phase diagram of the mixing time of the simple random walk on the SFP random graph on the one-dimensional torus of size $N$. The symbols $\gtrsim$~and~$\simeq$~indicate an (in)equality up to a slowly varying function in $N$.}	\label{fig:fd_mix}
\end{figure}

Let us further comment on these results also comparing them with the phase-diagram borrowed from \cite{HHJ17}, see Figure~\ref{mrvjs}. In this diagram, we report the asymptotic behaviour of the graph distances between vertices with respect to their Euclidean distance for SFP on $\Z^d$ (so we are only interested in the case $d=1$).   

{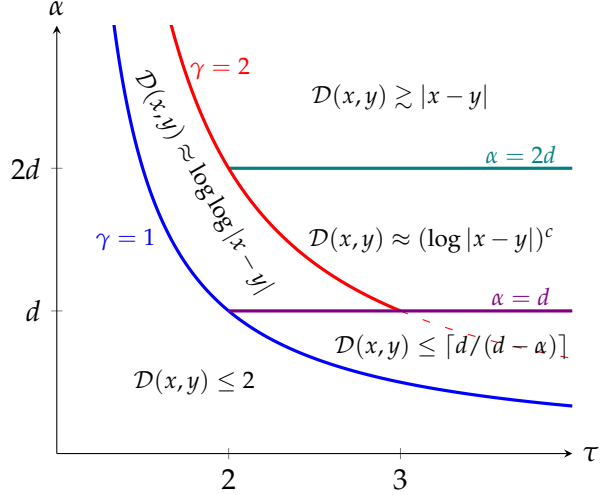
\begin{figure}[ht!]
		\centering
		\begin{tikzpicture}
		\begin{axis}[
		axis lines = left,
		xlabel = $\tau$,
		ylabel = {$\alpha$},
		every axis x label/.style={	at={(ticklabel* cs:1.00)},anchor=west,},
		every axis y label/.style={ at={(ticklabel* cs:1.00)}, anchor=south,},
		xmin=0, xmax=3, ymin=0, ymax=3, xtick={1,2}, ytick={1,2}, xticklabels={$2$,$3$}, yticklabels={$d$,$2d$},
		]
		
		\addplot [domain=0:1, samples=100, color=blue,style=very thick]{1/x};
		\addplot [domain=1:3, samples=100, color=blue,style=very thick]{1/x};
		\addplot [domain=1:3, samples=100, color=teal, style=very thick]{2};
		\addplot [domain=0:2, samples=100, color=red,style=very thick,]{2/x};
		\addplot [domain=2:3, samples=100, color=red,style={loosely dashed}]{2/x};
		\addplot [domain=1:3, samples=100, color=violet,style=very thick,]{1};
		\node[] at (axis cs: 0.4,1.5) {\color{blue}\footnotesize $\gamma=1$};
		%\node[] at (axis cs: 0.94,2.7) {\color{red}\footnotesize $\gamma=2$};
		\node[] at (axis cs: 0.95,2.7) {\color{red}\footnotesize $\gamma=2$};
		\node[] at (axis cs: 2.7,2.1) {\color{teal}\footnotesize $\alpha=2d$};
		\node[] at (axis cs: 2.7,1.1) {\color{violet}\footnotesize $\alpha=d$};  
		\node[] at (axis cs: 2,2.5) {\footnotesize $\mathcal D(x,y)\gtrsim  |x-y |$};
		\node[] at (axis cs: 2.18,1.5) {\footnotesize $\mathcal D(x,y)\approx (\log |x-y |)^c$};
		\node[] at (axis cs: 0.8,0.5) {\footnotesize $\mathcal D(x,y)\leq 2$};
		\node[] at (axis cs: 2.3,0.75) {\footnotesize $\mathcal D(x,y)\leq \lceil d/(d-\alpha)\rceil$};
		\node[rotate=300] at (axis cs: 0.85,1.9) {\footnotesize $\mathcal D(x,y)\approx \log\log |x-y|$};
		\end{axis}
		\end{tikzpicture}
		\caption{Phase diagram of graph distances in SFP on the infinite lattice $\Z^d$ from \cite{HHJ17}. $\mathcal D(x,y)$ indicates the graph distance between points $x$ and $y$ (conditioned on being in the infinite cluster of the graph, see the paper for more details), while {$|\cdot |$} is the Euclidean distance. See \cite{HH21} for the precise meaning of the symbol $\approx$.}
				\label{mrvjs}
	\end{figure} 
\ref{case1}: in this case, the mean degree of the nodes of SFP on $\T_N$ goes to infinity as $N$ grows. Nevertheless, the resulting graph is far from being the complete graph (even if the graph distances on the infinite lattice for $\gamma<1$ are bounded by 2, see Figure \ref{mrvjs}) and the bound of $\tmix(G_N)$ in this regime presents several challenges, see Section \ref{sec:techniques}. Our result here holds almost surely and we point out that this fact becomes fundamental for the proof of \ref{case2}. When $\alpha<1$ and $\gamma>1$ the mean degree is also unbounded and $\tmix(G_N)$ should be again polylogarithmic.
We do not study this regime since it is irrelevant for the investigation of the polynomial mixing of the other cases.

\smallskip

\ref{case2}: we consider this to be the most interesting regime, both from a mathematical viewpoint and for applications (see \cite{SHL19}). The degrees have bounded first moment in $N$, but unbounded variance, while the weights have infinite mean. The statement on $\tmix(G_N)$ holds in probability, see Theorem \ref{thm:gamma_tra_12}, and its proof consists in a bootstrap procedure that 
brings us back to the model with $\gamma<1$, see Section \ref{sec:techniques} for more details. The exponent $\gamma-1=\alpha(\tau-1)-1$ of the mixing time confirms the intuitive fact that the presence of nodes with a very high degree (due to small values of $\tau$), also called hubs, speeds up the mixing (note that, conversely, there are  dynamics that are slowed down by hubs, see e.g.~\cite{JM17}). This is a fundamental novelty with respect to the long-range percolation of \cite{BBY08}, where this phase clearly does not appear.
Another interesting point is that the graph with $1<\gamma<2$ and $\tau<2$ has a very small diameter (cfr.~Proposition \ref{diametro}), but a polynomial mixing time. In this regime the graph distances in the infinite lattice behave even as the $\log\log$ of the Euclidean distances, see Figure~\ref{mrvjs}: the graph exhibits the {ultra-small world} property.

\smallskip

\ref{case3}: in this case we are only able to show a lower bound of order $N^{\alpha-1}$. We make the conjecture that an upper bound of the same order applies:
\begin{conjecture}
	Let $1<\alpha<2$ and $\tau>2$. There exist  $c>0$ such that
	\begin{align*}
	\P(\tmix(G_N)<N^{\alpha-1}(\log N)^{c})\xrightarrow{N\to\infty}1\,.%\qquad{\P}-\mbox{a.s.}
	\end{align*}
\end{conjecture}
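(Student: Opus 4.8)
\emph{Proof idea.} The plan is to argue in the quenched setting: fix the weights $(W_x)_{x\in\T_N}$ and prove, with $\P$-probability tending to $1$, a scale-dependent Faber--Krahn (spectral-profile) lower bound for the lazy walk on $G_N$ --- that for some $c>0$ every non-empty $A\subsetneq\T_N$ with $\pi(A)\le\tfrac12$ has first Dirichlet eigenvalue at least $(\log N)^{-c}\,(N\pi(A))^{-(\alpha-1)}$. This is the SFP analogue of the isoperimetric estimate established for long-range percolation in \cite{BBY08}. Granting it, the upper bound $\tmix(G_N)\le N^{\alpha-1}(\log N)^{c'}$ follows from the standard spectral-profile (Goel--Montenegro--Tetali) / Nash-type machinery exactly as in \cite{BBY08}: the controlling integral $\int_{\pi_{\min}}^{1/2}\frac{\dif r}{r\,(Nr)^{-(\alpha-1)}}$ is $\asymp N^{\alpha-1}$, because $\alpha-1\in(0,1)$ makes the integrand integrable near $0$ and dominated by its value at the upper endpoint. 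So the whole difficulty is the environment estimate.

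\emph{The environment estimate.} Since $\tau>2$ the weights have a finite second moment, and since $\gamma=\alpha(\tau-1)>1$ the total degree of $G_N$ is $\Theta(N)$, so $\pi(x)\propto\deg_{G_N}(x)$ puts only $o(1)$ on any single vertex; moreover a positive density of vertices carry weight of order $1$, and among those the connection probability between $x$ and $y$ is $\asymp\|x-y\|^{-\alpha}\wedge1$, i.e.\ that of long-range percolation (LRP) with exponent $\alpha$. One would then truncate: delete the ``hub'' set $\mathcal H=\{x:W_x>K\}$ for a slowly growing level $K=K(N)$, obtaining a graph $\widehat G_N$ that is \emph{quantitatively} close to an LRP($\alpha$) graph --- edge counts between prescribed blocks concentrating around their LRP values, uniformly, by a Bernstein estimate that the truncation makes available --- so that the Faber--Krahn argument of \cite{BBY08} applies to $\widehat G_N$ essentially verbatim. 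It then remains to reinstate the hubs. Since $\E[\deg]<\infty$ in this regime, $\mathcal H$ carries only $o(N)$ of the total edge weight, so $\pi_{G_N}\asymp\pi_{\widehat G_N}$ off $\mathcal H$; the Dirichlet form only grows, $\mathcal{E}_{\widehat G_N}\le\mathcal{E}_{G_N}$; and no hub is a trap, because its large degree forces any single excursion through it to last only $(\log N)^{O(1)}$ steps. A comparison of the Diaconis--Saloff-Coste type then transfers the Faber--Krahn bound, and hence the mixing-time estimate, from $\widehat G_N$ to $G_N$.

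\emph{Main obstacle.} The delicate point is the choice of $K$. When $\tau<1+2/\alpha$ one has $\gamma<2$, so before truncation the number of edges across a cut has infinite variance and no usable concentration holds; and the hubs, of degree up to $\asymp N^{1/\gamma}$ --- almost linear when $\gamma$ is just above $1$ --- genuinely distort the local geometry away from one-dimensional LRP. Thus $K$ must be small enough that $\widehat G_N$ has LRP-like cut densities at all scales simultaneously (which the truncation should buy, by concentration over the polynomially many dyadic block pairs), yet large enough that $\mathcal H$ is sparse enough that reinstating it costs only polylogarithmic factors in both $\pi$ and $\tmix$. Whether a single threshold achieves both is unclear, and a multi-scale variant --- peeling the weights off dyadically, i.e.\ running the bootstrap of Theorem~\ref{thm:gamma_tra_12} ``towards'' the $\tau=\infty$ regime rather than away from it --- may be needed. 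Controlling this interplay between the $\sim N^{1/\gamma}$ high-degree vertices and the long-range backbone is the step I expect to be genuinely hard, and is the reason the statement is only a conjecture.
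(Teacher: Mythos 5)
This statement is labelled a \emph{conjecture} in the paper and is explicitly left unproved; the authors only assert (without details) that it can be shown in unspecified sub-regions of the phase $1<\alpha<2,\,\tau>2$. So there is no proof in the paper against which to compare, and you are right to present only a sketch. Your high-level plan --- truncate the hubs, show the truncated graph satisfies an LRP-type spectral-profile/Faber--Krahn estimate as in \cite{BBY08}, then reinstate the hubs by a Dirichlet-form comparison --- is a reasonable candidate route, and you correctly identify the core obstruction: in the sub-phase $\gamma<2$ the cut-edge counts have heavy tails and it is unclear that a single truncation level simultaneously (a) makes the truncated graph concentrate at all scales and (b) leaves $\mathcal H$ cheap to reinstate. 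That is indeed why the statement remains open.

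Two concrete corrections and cautions. First, the line ``Since $\tau>2$ the weights have a finite second moment'' is wrong: with $\P(W\ge t)=t^{-(\tau-1)}$ one has $\E[W]<\infty$ for $\tau>2$ but $\E[W^2]<\infty$ only for $\tau>3$; the window $2<\tau\le 3$, in which the weight variance is infinite, overlaps exactly with the hard ``triangle'' $\gamma<2$, so no step of the argument should lean on a second moment for $W$. Second, the reinstatement step is under-specified. A Diaconis--Saloff-Coste transfer requires controlling $\pi_{G_N}/\pi_{\widehat G_N}$ pointwise and routing the extra edges by paths of bounded congestion in $\widehat G_N$. Adding the hub edges indeed only increases the unnormalized Dirichlet form, but it also changes the normalization $D_{G_N}$ and the stationary measure, and the ratio $\pi_{G_N}(x)/\pi_{\widehat G_N}(x)$ is not uniformly polylog at the hubs themselves (a hub of weight $\asymp N^{1/(\tau-1)}$ has degree $\asymp N^{1/\gamma}$, which can be a large power of $N$ when $\gamma$ is close to $1$). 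One would need to treat $\mathcal H$ separately --- e.g.\ by collapsing each hub together with its neighbourhood and arguing on the quotient chain, or by a multi-scale peeling as you suggest --- rather than absorbing it into a single comparison constant. As it stands the ``no hub is a trap'' heuristic points the right way (extra long edges should only speed the walk up) but does not by itself yield the isoperimetric bound for sets that contain hubs.

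In short: since the paper proves nothing here, the benchmark is whether your sketch closes the gap, and it does not --- as you say yourself. The approach is sensible but the two issues above (the moment claim, and the hub-reinstatement comparison) must be repaired before the truncation argument could be turned into a proof.
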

There are several reasons to believe this statement to be true. First of all, when $\tau>2$ the weights have a finite mean, so it is reasonable to think that the model would behave like long-range percolation and the order of $\tmix(G_N)$ would match the $N^{\alpha-1}$ of \cite{BBY08}. Secondly, we can prove the conjecture in certain sub-regions of this phase.
Thirdly, the probability of linking two distant nodes undergoes a phase-transition at $\tau=2$, cfr.~Lemma \ref{bosh}.
One might object, by looking at Figure \ref{mrvjs}, that the ultra-small world regime extends to the ``triangle'' $\tau>2,\,\alpha>1,\,\gamma<2$ (the area between the pink, the purple and the red dotted lines in Figure \ref{fig:fd_mix}), so one might think that the order $N^{\gamma-1}$ could extend at least to that part of the diagram. Indeed, our proof for the upper bound of \ref{case2} works also in the triangle, yielding an upper bound of $N^{\gamma-1}$. Nevertheless, we believe this bound to be suboptimal (notice that for $\tau>2$ one has that $\gamma-1>\alpha-1$). This is because we believe $\tmix(G_N)$ to be an increasing function on $\tau$ (that is, higher weights bring to a faster mixing), and if $N^{\gamma-1}$ was the right order in the triangle and $N^{\alpha-1}$ outside of the triangle, we would have a sudden decrease of the order of $\tmix(G_N)$ when increasing $\tau$ in correspondence of $\gamma=2$ (the red dotted line in Figure~\ref{fig:fd_mix}).
 
\smallskip 

\ref{case4}: the last regime has the slowest possible mixing, $N^2$. 
This could be expected by noticing in Figure~\ref{mrvjs} that the graph distances between points behave linearly in their Euclidean distance. %the degrees of the nodes have a bounded second moment

}

\subsection{Techniques and outline of the paper}\label{sec:techniques}
We introduce precisely our model and state our main theorems in Section~\ref{modelandresults}. After giving some preliminary results in Section~\ref{preliminary}, we carry out the proofs. We summarize here the key ideas and techniques we use.
\begin{itemize}[leftmargin=*]
\item Upper bound of \ref{case1}, Section \ref{sec:gammino}:
the main idea is to study first a simplified model where the torus geometry is ignored: we introduce a new random graph $\sG_N$ where two nodes are linked if and only if the product of their weights is larger than $N^\alpha(\log N)^2$. We show that, if the weights follow independent Pareto distributions of parameter $\tau-1$, then  the mixing time of the simple random walk on $\sG_N$ is polylogarithmic in Proposition \ref{gmnsmplif}. In order to do so, we study the Cheeger constant of  $\sG_N$. While the Cheeger constant is usually used to find lower bounds on the mixing thanks to a test set, here we analyze the  bottleneck ratio of {\it all} the possible sets of vertices. This requires a thorough slicing of the  set of vertices according to their weights and then concentration inequalities to control the number of nodes and their degree in each slice (Proposition \ref{prop:VjConcentra}). Once the mixing of the toy model $\tmix(\sG_N)$ has been established, we prove that it is substantially equivalent to $\tmix(G_N)$, see Proposition \ref{Ub_con_ridotto}.

\smallskip

\item Upper bound of case \ref{case2}, Sections \ref{sec:importante} and \ref{sec:importante2}: this bound requires the most elaborate ideas. The initial approach is inspired by \cite{BBY08}: we divide the torus $\T_N$ into $K$ chunks $S_1,\dots,S_K$ of length $L=N^{\gamma-1+\varepsilon}$ for some small $\varepsilon>0$. We collapse all the points of the $S_j$'s into a unique point and we obtain a new graph $\Gamma$ on the torus $\T_K$. While in \cite{BBY08} the  graph resulting from a similar operation stochastically dominates  an Erd\H os-R\'enyi random graph with link probability $\log K/K$, we end up we something quite different. By a rescaling of order $L^{-(\tau-1)^{-1}}$ of the weights and a coupling procedure, we can show that $\Gamma$ stochastically dominates a random graph $\tGamma$ which is {\it again} an SFP. The fundamental point is that this time $\tGamma$ has parameters $\talpha<\alpha$ and $\ttau=\tau$ such that $\tgamma=\talpha(\ttau-1)<1$. Furthermore, by using  multicommodity flows as a tool, it is possible to bound  $\tmix(G_N)$ by some quantities related to $G_N$ and $\tGamma$, see Lemma \ref{keylemma}. This is a refinement of the approach of \cite{BBY08}, since for long-range percolation much cruder bounds are sufficient.

The second part of the proof in Section \ref{sec:importante2} is devoted to the estimate of these quantities. The first one is $\tmix(\tGamma)$, which we already know to be at most polylogarithmic in $K$ by case \ref{case1}. The second one is the largest diameter $\Delta_{G_N}$ of the graphs induced by $G_N$ on each $S_j$; we bound $\Delta_{G_N}$ by adapting an argument of \cite{DVH13} for graph distances on $\Z^d$. The third is the ratio of the total number of edges in $G_N$ and in $\tGamma$, which we prove to be very stable thanks to a Bernstein-type concentration inequality in Proposition \ref{ratiodegrees}. The last two quantities, called $\Pi_{G_N}$ and $R_{G_N,\tGamma}$ in Lemma \ref{keylemma}, involve the equilibrium measure on $G_N$ and its relation with the equilibrium measure on $\tGamma$. Their study, Propositions \ref{ratiopis} and \ref{ags}, is quite involved and technical. One is forced not only to bound the maximum of the degrees on each $S_j$, but also its product with the sum of all the other degrees in $S_j$. We achieve an optimal bound by  using several times the Fuk--Nagaev inequality, see Theorem \ref{thm:fuknagaev}.

\smallskip

\item Upper bound of case \ref{case4}, Section \ref{tbls}: we use a second moment method to show the concentration of the total degree of $G_N$ around its mean, see Lemma \ref{conc_Dtot}. In turn, this allows us to easily bound $\tmix(G_N)$ via the hitting times of the chain in Proposition \ref{prop:gammaalphagrande}.

\smallskip

\item Lower bounds of all regimes, 
Section \ref{lowerbounds}: the lower bounds of cases \ref{case2} and \ref{case3} are obtained at once in Proposition \ref{boundary_boundal} using a test set in the Cheeger constant. The lower bound of case \ref{case4} uses the parallel between random walks on graphs and electrical networks: similarly to \cite{benjaberger} we show that there exists a positive fraction of nodes that are cut-points for the graph (in a particular sense, see Lemma \ref{lem:good_cut}) and infer in Proposition \ref{prop:LBag22} that the mixing must be at least the square of the number of vertices.
\end{itemize}

\section{Model and results}\label{modelandresults}

\subsection{Scale-free percolation on the torus}\label{shesshion}
We describe now in detail the distribution of the SFP random graph $G_N$.  The set of vertices of $G_N$ is $\{1,2,...,N\}$, which we identify with $\T_N:=\Z/N\Z$, the torus of size $N$.
%We are now going to construct a random graph $G_N$. 
The edge set $E(G_N)$ with law $\P$
%contains all edges connecting two neighbouring points  of $V_N$ (this is in order to avoid problems of connectedness), including vertices $1$ and $N$, and a random set of edges which 
is constructed in two steps:
\begin{itemize}[leftmargin=*]
	\item for $\tau>1$, we associate to each $x\in \T_N$ a random weight $W_x$ such that the weights under $\P$ are independent and follow a Pareto distribution with parameter $\tau-1$, that is 
	\begin{align}\label{weightdistribution}
	\P(W_1 \geq t)=\begin{cases}
	           t^{-{(\tau-1)}}& t\ge 1\\
	           1 & t<1\,.
	          \end{cases}
	\end{align}
	\item Once we have fixed the weights of the nodes of the graph, we connect independently any couple of nodes $x,\,y\in \T_N$ with probability
	\begin{align}\label{eq:con_proba}
	\P(x\leftrightarrow y\,|\,W_x\,,W_y):=1-\e^{-\frac{W_xW_y}{\|x-y\|^\alpha}}
	\end{align}
	where $\alpha>0$ is another parameter of the graph and $\|x-y\|$ denotes the distance of $x$ and $y$ on the torus, that is, $\|x-y\|:= |x-y|\wedge (N-|x-y|)$, where $\wedge$ indicates the minimum between the two. 
\end{itemize}
To make sure that $G_N$ is connected, we will also impose that $x$ and $ x+1$ are linked for all $x\in \T_N$ (we identify $N+1$ with $1$). 

\begin{remark}
	The Pareto tail for the weights in \eqref{weightdistribution} has been chosen for convenience, rather than some more general distribution as in \cite{DVH13}. We preferred to sacrifice generality for cleaner and more readable calculations in the proofs. We believe that our results would substantially remain true for weights whose distribution has a regularly varying tail of index $\tau-1$.
\end{remark}

For a given graph $G=(V,E)$ we write $\{x\stackrel{G }{\leftrightarrow} y\}$, or simply $\{x\leftrightarrow y\}$ when there is no risk of confusion, for the event that $x$ and $y$ are connected by an edge. 
$D_x=D_x(G):=\sum_{y\neq x}\1{y \leftrightarrow x}$  indicates the degree of node $x\in V$. For a set $A\subseteq V$, we write $D_A=D_A(G):=\sum_{x\in A}D_x$, so that  $D_G=2|E|$ is twice the total number of edges. For two sets $A,B\subseteq V$, we also let $D_{A,B}:=\sum_{x\in A,\,y\in B}\1{x\leftrightarrow y}$ be the number of edges going from $A$ to $B$. The diameter of $G$ is defined as
\[
 \diam(G):=\max_{x,\,y\in V}\mathcal D(x,\,y)\, ,
\]
where $\mathcal D(x,\,y)$ denotes the graph distance between points $x$ and $y$, that is, the minimal number of edges of the graph one has to cross to go from $x$ to $y$.

\subsection{The simple random walk on $G_N$ and its mixing time}\label{orizzonti}
For a given realization of the graph $G_N$, we define now the simple random walk $(X_n)_{n\in\N_0}$. This is the Markov chain with transition matrix given by
\begin{equation}\label{transitionmatrix}
\begin{cases}
P(x,y)&= \frac{1}{2D_x}\1{y\leftrightarrow x}\\
P(x,x)&=\frac 12
\end{cases}.
\end{equation}
We consider this {\it lazy} version of the walk in order to avoid periodicity issues. The invariant (in fact, reversible) measure $\pi$ for the walk $(X_n)_{n\in\N_0}$  is defined as 
\begin{align}\label{invariant}
\pi(x)=\pi_{G_N}(x):=\frac{D_x}{D_{G_N}},\quad x\in \T_N\,.
\end{align}
Laziness ensures that, no matter the starting point of the walk, the distribution of $X_n$ will approach  $\pi$
by the ergodic theorem. Our goal is to quantify how long we will have to wait before these two measures are in some sense close to each other. To this end, we define 
\begin{align*}
d(n):=\sup_{x\in \T_N}\|P^n(x,\cdot)-\pi(\cdot)\|_{ {}_{\rm TV}},\quad n\in\N
\end{align*}
as the distance between the distribution of the walk on $G_N$ at time $n$ and $\pi$ when starting from the worst possible vertex. Recall that the total variation distance for two measures $\mu$ and $\nu$ on $G_N$ is given by
\begin{align*}
\| \mu-\nu\|_{ {}_{\rm TV}}:=\frac 12\sum_{x\in \T_N}|\mu(x)-\nu(x)|\,.
\end{align*}
The time for $(X_n)_{n\in\N_0}$  to get close to $\pi$ is the so-called mixing time of the chain:
\begin{align*}
\tmix(G_N):=\inf\big\{n\in\N:\,d(n)<\tfrac 14\big\}\,.
\end{align*}
For other graphs $G$ we will write $\tmix(G)$ for the analogous quantity. Notice that the quantity $1/4$ in the definition is arbitrary, see ~\citet[Section~4.5]{levinperes}.

\smallskip

\subsection{Main results}\label{mainresults}
We call
\begin{align*}
\gamma:=\alpha(\tau-1)\,.
\end{align*}
The main results of our work are the following.

\begin{theorem}\label{gammino}
	Let $\gamma<1$. There exists $c>0$ such that, $\P\mbox{\,--\,a.s.}$, for $N$ large enough,
	\begin{align*}
		\tmix(G_N)\le (\log N)^c \,.
	\end{align*}
\end{theorem}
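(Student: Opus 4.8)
The plan, following the outline in Section~\ref{sec:techniques}, is to first discard the torus geometry, bound the mixing time of a purely weight-driven surrogate graph, and then transfer the bound to $G_N$. Let $\sG_N$ be the graph on $\T_N$ in which $x$ and $y$ are joined exactly when $W_xW_y>N^\alpha(\log N)^2$, together with the deterministic backbone $x\sim x+1$. Since $\|x-y\|\le N$, such a pair has $\P(x\leftrightarrow y\mid W_x,W_y)\ge 1-\e^{-(\log N)^2}$ in $G_N$, so a union bound over the at most $N^2$ pairs and Borel--Cantelli give that, $\P$--a.s., $\sG_N\subseteq G_N$ for all $N$ large. It then suffices to prove (a) $\tmix(\sG_N)\le(\log N)^{c}$ a.s.\ and (b) that replacing $\sG_N$ by $G_N$ costs only a polylogarithmic factor.

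\textbf{Mixing of $\sG_N$.} I would lower bound the bottleneck ratio
\[
\Phi_*\ =\ \min_{S:\ \pi_{\sG_N}(S)\le 1/2}\ \frac{D_{S,S^c}(\sG_N)}{2\,D_S(\sG_N)}
\]
by $(\log N)^{-c'}$; as $D_x\ge 2$ and $D_{\sG_N}\le N^2$ we get $\log(1/\pi_{\min})\le 2\log N$, and the standard estimate $\tmix\le C\Phi_*^{-2}\log(1/\pi_{\min})$ then settles $\sG_N$. The delicate point is that $\Phi_*$ is an infimum over \emph{all} subsets, not one cleverly chosen test set. The mechanism is that in $\sG_N$ a vertex of weight $w$ is joined to every vertex of weight $>N^\alpha(\log N)^2/w$; in particular every vertex is joined to the whole ``super-core'' $C:=\{y:W_y>N^\alpha(\log N)^2\}$, of size $\asymp N^{1-\gamma}$ up to polylog — a positive power of $N$ because $\gamma<1$. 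Concretely I would slice $\T_N$ into weight bands $V_j=\{x:2^j\le W_x<2^{j+1}\}$ and, via Chernoff/Bernstein estimates, show that $\P$--a.s.\ for every $j$ both $|V_j|$ and the band degree-sum $D_{V_j}(\sG_N)$ stay within constant factors of their means (a concentration statement like Proposition~\ref{prop:VjConcentra}); granted this, a case analysis on how the $\pi_{\sG_N}$-mass of $S$ is distributed among the bands yields $D_{S,S^c}(\sG_N)\gtrsim(\log N)^{-c'}D_S(\sG_N)$ whenever $\pi_{\sG_N}(S)\le 1/2$, with sets of tiny mass handled directly by the backbone. Collecting the cases is Proposition~\ref{gmnsmplif}, and Borel--Cantelli upgrades ``with high probability'' to ``a.s.\ for $N$ large''.

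\textbf{Transfer to $G_N$.} On the a.s.\ event $\{E(\sG_N)\subseteq E(G_N)\}$ the lazy Dirichlet forms satisfy $\mathcal E_{G_N}(f,f)\ge\tfrac{D_{\sG_N}}{D_{G_N}}\,\mathcal E_{\sG_N}(f,f)$; comparing the two stationary measures reduces the problem to controlling $\max_x D_x(G_N)/D_x(\sG_N)$ and $D_{G_N}/D_{\sG_N}$, which I would do by computing $\E[D_x(G_N)\mid W_x]$ and concentrating it uniformly in $x$ with a Bernstein/Fuk--Nagaev bound. These ratios turn out to be polylogarithmic except when $\alpha<1$ and $\tau>2$, where $G_N$ is polynomially denser than $\sG_N$ (already at the low-weight vertices) and the crude comparison is too lossy; in that corner I would instead run the bottleneck estimate directly on $G_N$, exploiting that its short-range edges form a long-range percolation with exponent $<1$, which is an expander, so that $D_{S,S^c}(G_N)\gtrsim(\log N)^{-c}D_S(G_N)$. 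In either case one concludes $\tmix(G_N)\le(\log N)^{c}$ a.s. This is the content of Proposition~\ref{Ub_con_ridotto}.

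\textbf{Main obstacle.} The crux is the Cheeger estimate for $\sG_N$: lower bounding the bottleneck ratio uniformly over \emph{every} subset, which forces the weight-slicing to be matched by concentration bounds on cardinalities and degrees in each band that are strong enough to survive union bounds over bands and over $N$, so as to yield an almost-sure conclusion. The transfer is delicate only when $\alpha<1$ and $\tau>2$, where the degrees of $G_N$ are a power of $N$ and one must check that the many extra edges cannot open a bottleneck.
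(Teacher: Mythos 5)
Your two-step blueprint --- first bound the Cheeger constant of the weight-only surrogate $\sG_N$ uniformly over \emph{all} subsets via a slicing of the vertices by weight, then transfer the resulting polylog mixing bound to $G_N$ using a ratio of degrees --- is exactly what the paper does (Propositions~\ref{gmnsmplif} and \ref{Ub_con_ridotto}). Your slicing is dyadic rather than the paper's $(\log N)^{j}$-wide bands, which is cosmetic: both give $O(\mathrm{polylog}\,N)$ slices, and the subsequent ``which bands carry the $\pi$-mass of $S$'' case analysis can be run either way. The one genuine departure is in the transfer: you compare Dirichlet forms and stationary measures, which gives $\tmix(G_N)\lesssim\tmix(\sG_N)$, whereas the paper routes a multicommodity flow and pays a square, $\tmix(G_N)\lesssim\big(\tmix(\sG_N)\big)^2$. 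Since the final answer is polylogarithmic, this changes nothing substantively, but your route is cleaner. In either case the bound hinges on the same quantity, $\max_{x}D_x(G_N)/\sD_x$, so the two transfers are really the same comparison dressed differently.

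Where your argument remains incomplete is the corner you flag yourself, $\tau>2$ (which under $\gamma<1$ forces $\alpha<1$). There, since the weights have finite mean, a low-weight vertex has $\E[D_x\mid W_x]\asymp W_x\,\E[W]\sum_{d}d^{-\alpha}\asymp W_x N^{1-\alpha}$, whereas \eqref{expecteddegreesimplified} gives $\E[\sD_x\mid W_x]\asymp W_x^{\tau-1}N^{1-\gamma}(\log N)^{-2(\tau-1)}$; since $\gamma-\alpha=\alpha(\tau-2)>0$, the ratio is a genuine positive power of $N$, not a polylog, and the transfer fails. (Incidentally, this also shows that the paper's bound \eqref{expecteddegreeoriginal} and hence \eqref{corri} cannot hold as stated when $\tau>2$: the derivation in Appendix B bounds $\frac{W_1}{d^\alpha}\int_1^{d^\alpha/W_1}w^{1-\tau}\,\dif w$ by $W_1^{\tau-1}d^{-\gamma}$, which is only true for $\tau\le 2$; for $\tau>2$ the integral is $O(1)$ and the summand is $\asymp W_1 d^{-\alpha}$, so the summation over $d$ gives $W_1 N^{1-\alpha}$, not $W_1^{\tau-1}N^{1-\gamma}$.) Your proposed fix --- run the Cheeger bound directly on $G_N$ because the weight-$O(1)$ vertices alone form a long-range percolation with exponent $\alpha<1$, which expands --- is plausible but is not a proof: you would still have to show that a set $S$ containing a hub of weight $\gg N^{\alpha}$, for which $D_S$ may be dominated by that single vertex, has $D_{S,S^c}$ of the same order, and none of the slicing machinery you set up for $\sG_N$ is directly usable on $G_N$. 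Until that case is handled, the $\tau>2$ portion of the statement is not established by your write-up.
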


\smallskip

Recall that a measurable function $\ell:\,\N\to(0,\infty)$ is said to be slowly varying if, for all $s\in(0,\infty)$, it holds that $\lim_{t\to\infty}\ell(st)/\ell(t)=1$~\cite[Section~1.2]{BGTT89}.
\begin{theorem}\label{thm:gamma_tra_12}
	Let $1<\gamma<2$ and $1<\tau<2$. There exist a constant $c>0$ and a slowly varying function $\ell:\,\N\to(0,\infty)$ such that
	\begin{align*}
		\P\big(N^{\gamma-1}(\log N)^{c}
		\leq \tmix(G_N)\leq
		N^{\gamma-1}\ell(N)\big)\xrightarrow{N\to\infty}1\,.
	\end{align*}
\end{theorem}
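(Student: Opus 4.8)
The plan is to prove the upper and lower bounds separately, as they require quite different tools, and to lean heavily on Theorem~\ref{gammino} for the upper bound.

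\textbf{Lower bound.} First I would establish $\tmix(G_N) \geq N^{\gamma-1}(\log N)^c$ via a bottleneck (Cheeger) argument. The natural test set is a half-torus $A = \{1, \dots, \lfloor N/2 \rfloor\}$, whose $\pi$-measure is bounded away from $0$ and $1$. The bottleneck ratio is $\Phi(A) = D_{A, A^c}/D_A$. The denominator $D_A$ concentrates around a constant times $N$ by a routine second-moment computation (the degrees have bounded first moment in $N$ since $\gamma > 1$). For the numerator, $D_{A,A^c}$ counts edges crossing the two boundary points of $A$ in the torus; the expected number of such crossing edges is, up to slowly varying corrections, of order $N^{2-\gamma}$ — this comes from summing $\P(x \leftrightarrow y) \approx \E[W_x W_y] \|x-y\|^{-\alpha}$ over pairs $x \in A$, $y \in A^c$ straddling a boundary point, which diverges like $\sum_{k} k \cdot k^{-\alpha}$ restricted appropriately, and using that the weight tail index $\tau - 1 < 1$ forces a truncation at scale $N^{\alpha/(\tau-1)}$; the precise exponent $2-\gamma$ should fall out of Lemma~\ref{bosh}. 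An upper tail concentration bound (again Fuk--Nagaev, since individual contributions are heavy-tailed) gives $D_{A,A^c} \leq N^{2-\gamma}\ell(N)$ with high probability. Then $\Phi(A) \lesssim N^{2-\gamma}\ell(N)/N = N^{1-\gamma}\ell(N)$, and the standard bound $\tmix \geq (1-o(1))/(4\Phi_*) \geq c/\Phi(A)$ yields the claimed lower bound $N^{\gamma-1}$ up to slowly varying factors. (This is exactly the strategy announced for Proposition~\ref{boundary_boundal}.)

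\textbf{Upper bound.} This is the bootstrap/renormalization argument and is the main obstacle. I would partition $\T_N$ into $K = N/L$ blocks $S_1, \dots, S_K$ of length $L = N^{\gamma - 1 + \eps}$ for small $\eps > 0$, and form the quotient multigraph $\Gamma$ on $\T_K$ by collapsing each $S_j$ to a point. The key structural claim is that, after rescaling the block-weights by $L^{-1/(\tau-1)}$, $\Gamma$ stochastically dominates a fresh SFP graph $\tGamma$ on $\T_K$ with parameters $\ttau = \tau$ and $\talpha < \alpha$ chosen so that $\tgamma = \talpha(\tau - 1) < 1$: indeed the collapsed weight of a block behaves like the max of $L$ i.i.d.\ Pareto$(\tau-1)$ variables, which after the rescaling is again Pareto-like, and the distance exponent degrades from $\alpha$ to roughly $\alpha - (\gamma - 1 + \eps)\cdot(\text{something})$; one tunes $\eps$ so that $\tgamma < 1$. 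Then invoke the multicommodity-flow comparison (Lemma~\ref{keylemma}): $\tmix(G_N)$ is controlled by the product of (a) $\tmix(\tGamma)$, which is $\mathrm{polylog}(K) = \mathrm{polylog}(N)$ by Theorem~\ref{gammino} applied on the torus $\T_K$ — this is why the a.s.\ statement of case~(i) is needed, since we apply it to a random $K$ and random parameters — (b) the maximal within-block diameter $\Delta_{G_N} = \max_j \diam(G_N[S_j])$, bounded by adapting the \cite{DVH13} argument to give something like $\mathrm{polylog}(N)$, times a factor $L$ from the scale, (c) the edge-count ratio $D_{G_N}/D_{\tGamma}$, controlled by Bernstein/Fuk--Nagaev concentration (Proposition~\ref{ratiodegrees}), and (d) the equilibrium-measure ratios $\Pi_{G_N}$ and $R_{G_N,\tGamma}$, which require bounding not just $\max_{x \in S_j} D_x$ but also $(\max_{x\in S_j} D_x)\cdot D_{S_j}$, done via repeated Fuk--Nagaev estimates. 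Combining, $\tmix(G_N) \lesssim L \cdot \mathrm{polylog}(N) = N^{\gamma - 1 + \eps}\,\mathrm{polylog}(N)$; sending $\eps \downarrow 0$ along a sequence and absorbing the polylog and the $N^\eps$ into a single slowly varying function $\ell(N)$ gives $\tmix(G_N) \leq N^{\gamma-1}\ell(N)$ in probability.

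\textbf{Main obstacle.} The delicate part is unquestionably the upper bound: making the stochastic domination $\Gamma \succeq \tGamma$ rigorous (the collapsed block is not literally a single weighted vertex — one must compare connection probabilities $1 - \exp\{-\sum_{x \in S_i, y \in S_j} W_x W_y \|x-y\|^{-\alpha}\}$ against the SFP form with the rescaled max-weight, uniformly in $i,j$), and then controlling the five quantities in Lemma~\ref{keylemma} simultaneously with errors that multiply to only a slowly varying factor. In particular the equilibrium-measure comparison $R_{G_N,\tGamma}$ is where the heavy tails bite hardest, because a single hub in a block can distort $\pi_{G_N}$ relative to $\pi_{\tGamma}$, and the Fuk--Nagaev inequality must be invoked at the right truncation level to keep this under control. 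Everything else (the second-moment concentrations, the within-block diameter bound) is comparatively routine.
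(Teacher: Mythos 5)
Your proposal tracks the paper's own proof closely: the lower bound via the half-torus test set in Proposition~\ref{boundary_boundal}, and the upper bound via the block renormalization to a fresh SFP $\tGamma$ on $\T_K$ with $\tgamma<1$ plus the multicommodity-flow comparison of Lemma~\ref{keylemma}, with Theorem~\ref{gammino} applied to $\tGamma$. Two small details are attributed differently than in the paper: the factor $N^{\gamma-1}$ in the upper bound actually enters through the edge-count ratio $|E(G_N)|/|E(\tGamma)|$ and through the quantity $|E(G_N)|\Pi_{G_N}$ (Propositions~\ref{ratiodegrees} and~\ref{ags}), while the within-block diameter $\Delta_{G_N}$ is by itself only polylogarithmic (Proposition~\ref{diametro}), and the paper's stochastic domination in Proposition~\ref{prop:stochasticdomination} compares only the max-weight vertices of the blocks (as in~\eqref{mipe}) rather than the full block-to-block connection probability $1-\exp\{-\sum W_xW_y\|x-y\|^{-\alpha}\}$; for the lower bound the paper gets away with Markov's inequality on $\E[|\partial S|]$ together with the trivial bound $D_S\ge N$ from nearest-neighbour edges, rather than any concentration estimate. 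None of these change the structure of the argument, so the plan is essentially identical to the paper's.
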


\smallskip

\begin{theorem}\label{strn}
	Let $1<\alpha<2$ and $\tau>2$. There exists  $c>0$ such that, $\P\mbox{\,--\,a.s.}$, for $N$ large enough,
	\begin{align*}
	\tmix(G_N)
		\geq N^{\alpha-1}(\log N)^{c}\,.%\qquad{\P}-\mbox{a.s.}
	\end{align*}
\end{theorem}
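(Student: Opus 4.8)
The approach is to lower-bound $\tmix(G_N)$ through the \emph{bottleneck ratio} of the lazy walk. Recall (see e.g.~\cite{LP17}) that for a reversible lazy chain $\tmix(G_N)\ge \tfrac{1}{4\Phi_*}$, where $\Phi_*:=\min\{\Phi(S):\ \pi(S)\le \tfrac12\}$ and $\Phi(S)$ is the bottleneck ratio of $S$. For the simple random walk defined by \eqref{transitionmatrix}--\eqref{invariant} a one-line computation gives the convenient form $\Phi(S)=D_{S,S^c}/(2D_S)$, so it suffices to exhibit a single set $S\subseteq\T_N$ with $\pi(S)\le\tfrac12$ whose bottleneck ratio is of order $N^{1-\alpha}$, up to a suitable power of $\log N$. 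I would take $S$ to be an arc of $\T_N$ of length $\asymp N/2$, with its precise length tuned so that $D_S\le D_{S^c}$.

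The denominator is handled deterministically: since the edges $\{x,x+1\}$ belong to $G_N$ by construction, every vertex has degree at least $2$, hence $D_S\ge 2|S|\asymp N$. For the numerator we use that $\tau>2$ makes the weights integrable, $\E[W_1]=\tfrac{\tau-1}{\tau-2}<\infty$. Combining this with $1-\e^{-t}\le t$ and \eqref{eq:con_proba},
\begin{align*}
\E\big[D_{S,S^c}\big]=\sum_{x\in S,\,y\in S^c}\E\Big[1-\e^{-W_xW_y\|x-y\|^{-\alpha}}\Big]\ \le\ (\E W_1)^2\sum_{x\in S,\,y\in S^c}\|x-y\|^{-\alpha}\,.
\end{align*}
A direct estimate on the torus shows that for an arc $S$ of length $\asymp N$ and $1<\alpha<2$ one has $\sum_{x\in S,\,y\in S^c}\|x-y\|^{-\alpha}\asymp N^{2-\alpha}$, the dominant contribution coming from vertices $x$ deep inside $S$ together with the $\bO(1)$ vertices of $S^c$ closest to the two endpoints of $S$, summed over the relevant distances (this is the computation underlying the lower bound in \cite{BBY08}). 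Hence $\E[D_{S,S^c}]\lesssim N^{2-\alpha}$ and $\E[\Phi(S)]\lesssim N^{1-\alpha}$, which already yields $\tmix(G_N)\gtrsim N^{\alpha-1}$ in probability.

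Upgrading to an almost-sure bound requires controlling $D_{S,S^c}$ pathwise. Conditionally on the weights, $D_{S,S^c}$ is a sum of independent Bernoulli variables of total mean $\mu_W:=\sum_{x\in S,\,y\in S^c}(1-\e^{-W_xW_y\|x-y\|^{-\alpha}})\le \sum_{x\in S,\,y\in S^c}W_xW_y\|x-y\|^{-\alpha}$, so a conditional Chernoff bound gives $D_{S,S^c}\le 2\mu_W$ with probability $1-\e^{-c\mu_W}$ as soon as $\mu_W\gg\log N$, and it remains to bound $\mu_W$ almost surely. Since the weights have infinite variance when $\tau<3$, I would first discard the $\P$-a.s.\ negligible event that some weight exceeds $N^{1/(\tau-1)}\log N$, and then estimate the heavy-tailed partial sums $\sum_{y\in S^c}W_y\|x-y\|^{-\alpha}$ ($x\in S$) via the Fuk--Nagaev inequality of Theorem \ref{thm:fuknagaev}; a Borel--Cantelli argument then gives $\mu_W\le N^{2-\alpha}(\log N)^{c'}$ eventually a.s., whence $\tmix(G_N)\ge\tfrac14\Phi(S)^{-1}\gtrsim D_S/D_{S,S^c}\gtrsim N^{\alpha-1}(\log N)^{-c'}$ eventually a.s. To obtain a \emph{positive} power of $\log N$ one refines the choice of $S$: among the $\asymp N$ possible positions of each endpoint one selects, using independence of the $(W_x)$, windows atypically depleted of large weights, which suppresses the short-range part of $\mu_W$ and turns the logarithmic loss into a gain.

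The step I expect to be the main obstacle is exactly this almost-sure control of $\mu_W$ (equivalently of $D_{S,S^c}$) in the regime $2<\tau<3$: the second-moment method is useless there, and one must run the Fuk--Nagaev estimate carefully, tracking both the truncated-mean part and the ``single big jump'' part of the partial sums, and uniformly over the polynomially many admissible positions of the cut. The rest of the argument -- the bottleneck-ratio inequality, the deterministic lower bound on $D_S$, and the $\sum\|x-y\|^{-\alpha}\asymp N^{2-\alpha}$ computation -- is routine.
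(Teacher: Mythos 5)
Your strategy is the same as the paper's. The paper proves this lower bound (together with the case $1<\tau<2,\,1<\gamma<2$) in Proposition~\ref{boundary_boundal}, using exactly the mechanism you describe: the half-torus arc $S=\{1,\dots,\lfloor N/2\rfloor\}$ as a test set in the Cheeger inequality~\eqref{tmixcheeger}, the deterministic bound $D_S\gtrsim N$ coming from the nearest-neighbour edges, the first-moment estimate $\E[D_{S,S^c}]\lesssim N^{2-\alpha}$ (the paper goes through Lemma~\ref{bosh}, your shortcut via $(\E W_1)^2\sum\|x-y\|^{-\alpha}$ is equivalent since $\tau>2$), and then Markov's inequality. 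So the core of your proposal matches the paper.

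Two remarks. First, the paper's proof in fact only delivers a statement in probability, namely $\P\big(\tmix(G_N)<(\log N)^{-3}N^{\alpha-1}\big)\to 0$, and makes no attempt at either the almost-sure claim or the \emph{positive} power $(\log N)^c$ that Theorem~\ref{strn} nominally asserts; both appear to be slips in the theorem statement (cf.\ Section~\ref{sec:not}, where $\gtrsim$ hides polylogarithmic corrections in either direction), since Markov on $\E[D_{S,S^c}]$ cannot produce a negative factor in the exponent. Your worry about the a.s.\ upgrade is therefore well founded; the conditional Chernoff + Fuk--Nagaev + Borel--Cantelli route you sketch is a reasonable way to strengthen the conclusion, but the paper does not carry it out, so you are being more careful than the source.

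Second, and this is a genuine flaw, your final paragraph about choosing "windows atypically depleted of large weights" to turn the logarithmic loss into a gain does not work. For $1<\alpha<2$ the deterministic sum $\sum_{x\in S,\,y\in S^c}\|x-y\|^{-\alpha}\asymp N^{2-\alpha}$ is dominated by pairs at distance of order $N$, not by the $\mathrm{O}(1)$ short-range pairs near the two cut points; moving the endpoints to avoid big nearby weights only changes an $\mathrm{O}(1)$ piece of $D_{S,S^c}$. Moreover $D_{S,S^c}$ has fluctuations of order $\mathrm{o}(N^{2-\alpha})$ around its mean, uniformly over the $\mathrm{O}(N)$ arc positions, so no choice of arc pushes the bottleneck ratio below $N^{1-\alpha}$ by a polylogarithmic factor. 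Drop that paragraph and state the bound with a negative power of $\log N$, as the paper effectively does.
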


\smallskip

\begin{theorem}\label{brng}
	Let $\alpha>2$ and $\gamma>2$. There exist  $c_1,c_2>0$ such that, $\P\mbox{\,--\,a.s.}$, for $N$ large enough,
	\begin{align*}
	c_1 N^{2}
	\leq \tmix(G_N) 
	\leq N^{2}(\log N)^{c_2}\,.%\qquad{\P}-\mbox{a.s.}
	\end{align*}
\end{theorem}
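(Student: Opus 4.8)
The plan is to treat the upper and lower bounds separately, since $N^2$ is both the diffusive scaling one expects for a one-dimensional chain and the maximal possible mixing time on a connected graph of diameter $\Theta(N)$.

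For the \emph{upper bound}, I would follow the strategy outlined in Section \ref{tbls}. The key point is that when $\gamma>2$ the degrees have a finite second moment, so the total degree $D_{G_N}=\sum_x D_x$ concentrates around its mean $\Theta(N)$; this is the content of the announced Lemma \ref{conc_Dtot}, proved by a second moment computation using independence of the edge-indicator variables given the weights and the tail bounds coming from \eqref{weightdistribution} together with $\gamma=\alpha(\tau-1)>2$. Having $D_{G_N}\asymp N$ with high probability, I would bound the mixing time through hitting times: by the standard relation $\tmix(G_N)\lesssim \max_{x,y}\ep_x[\mathcal T_y]$ (up to constants, with $\mathcal T_y$ the hitting time of $y$), and the electrical-network identity $\ep_x[\mathcal T_y]+\ep_y[\mathcal T_x]=2|E(G_N)|\,R_{\mathrm{eff}}(x,y)$, it suffices to control the effective resistance between any two vertices. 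Since the hard-wired edges $\{x,x+1\}$ already give a path of length $\|x-y\|\le N/2$, monotonicity of effective resistance under adding edges yields $R_{\mathrm{eff}}(x,y)\le \|x-y\|\le N/2$, hence $\max_{x,y}\ep_x[\mathcal T_y]\lesssim |E(G_N)|\cdot N\asymp N^2$, with the high-probability control on $|E(G_N)|$ absorbing the $(\log N)^{c_2}$ slack (in fact one expects to do without logs, but the statement only claims a polylog upper bound, so I would not fight for the cleanest constant). The almost-sure version follows by a Borel--Cantelli argument along the sequence $N$, again using the finite-second-moment tail estimates.

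For the \emph{lower bound} $\tmix(G_N)\ge c_1 N^2$, I would use the electrical/cut-point argument from Section \ref{lowerbounds} (Lemma \ref{lem:good_cut} and Proposition \ref{prop:LBag22}), modelled on \cite{benjaberger}. The idea is that when $\alpha>2$ (so long edges are very rare) a positive density of vertices are \emph{cut-points}: deleting such a vertex disconnects the torus, and moreover the edges near it are short, so the graph looks locally like a segment of $\Z$. Concretely, for a window of $\Theta(\log N)$ consecutive vertices one shows with probability bounded below (in fact $\to 1$ after tiling the torus into $\Theta(N/\log N)$ such windows and using independence across well-separated windows) that no edge jumps over the window; summing, there are $\gtrsim N$ cut-points, spread out so that any arc of length $\eps N$ contains $\gtrsim \eps N$ of them. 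One then lower-bounds the effective resistance across such an arc by the number of series cut-points it contains (Rayleigh monotonicity / the fact that cut-points act as bottlenecks in series), getting $R_{\mathrm{eff}}$ across the torus $\gtrsim N$. Combining with $|E(G_N)|\asymp N$ and the commute-time identity gives $\max_{x,y}\ep_x[\mathcal T_y]\gtrsim N^2$, and finally the general inequality $\tmix(G_N)\gtrsim \max_{x,y}\ep_x[\mathcal T_y]/(\text{const})$ — more precisely a test-set argument showing that the chain started from one end needs $\gtrsim N^2$ steps to reach total-variation distance $<1/4$ from $\pi$ because a linear-sized piece of $\pi$-mass sits on the far side of $\Theta(N)$ series bottlenecks.

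The main obstacle, as the authors signal, is the lower bound: establishing a \emph{linear} number of well-separated cut-points requires a careful argument that long edges (which do exist, though rarely, for all $\alpha>0$) do not bridge over the chosen windows, and one must handle the weights — a rare heavy weight could create a long edge even when $\alpha>2$. This is exactly where $\gamma=\alpha(\tau-1)>2$ is used: the probability that a fixed vertex in a window emits an edge of length $\ge \ell$ is, after integrating out the Pareto weight, of order $\ell^{-(\gamma-1)}$ up to constants (cf.\ the tail computations behind Lemma \ref{bosh}), so the expected number of edges straddling a length-$m$ window is $O(m\cdot m^{-(\gamma-1)})=O(m^{2-\gamma})\to0$; choosing $m=\Theta(\log N)$ (or even constant, if $\gamma>2$ strictly) makes this summable enough to get a density-bounded-below set of cut-points via a second-moment or direct union-bound argument, and then a Borel--Cantelli / concentration step upgrades "positive density in probability" to the almost-sure statement. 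Everything else is a fairly routine assembly of the commute-time identity, Rayleigh monotonicity, and the standard bottleneck lower bound on mixing.
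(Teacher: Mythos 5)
Your upper bound runs along essentially the same lines as the paper's Section~\ref{tbls}: concentrate $D_{G_N}$ around $\Theta(N)$ via a second--moment (Chebyshev) bound, then reduce to hitting times. The paper simply cites the general bound $t_{\mathrm{hit}}\lesssim |E|\cdot N$ from \citet[Proposition~10.7]{levinperes}, whereas you unpack it via the commute--time identity and Rayleigh monotonicity with the wired nearest--neighbour path; these are the same argument, yours slightly more explicit. That part is fine.

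For the lower bound you land on the same framework (linearly many cut--points, treated as series resistors), but the \emph{key lemma} — a linear density of good cut--points — is established very differently. The paper passes to the infinite graph $G_N(\Z)$, checks that a fixed $x_0$ is a good cut--point (meaning $x_0-1,\,x_0,\,x_0+1$ are all cut--points, so the graph is locally a segment) with a \emph{constant} probability $\psi(\alpha,\tau)>0$ because $\sum_{x\le x_0-1,\,y\ge x_0+1}\|x-y\|^{-\alpha}<\infty$ when $\alpha>2$, and then invokes the ergodic theorem for the translation--invariant measure $\mu$ together with dominated convergence; the restriction to the segment $\S_N$ inherits the cut--points. Your proposal instead tiles the torus into $\Theta(N/\log N)$ windows and appeals to ``independence across well--separated windows'' followed by a union/second--moment bound. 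Two issues: first, those events are \emph{not} independent, even for well--separated windows, because a single heavy weight $W_z$ affects long edges that could straddle several windows; you would have to condition on the weights and then quantify how rare the offending heavy--weight configurations are, which is precisely the bookkeeping the ergodic--theorem route avoids. Second, the exponent you quote for the probability of emitting a long edge, $\ell^{-(\gamma-1)}$, should be $\ell^{1-(\alpha\wedge\gamma)}$ (by Lemma~\ref{bosh}): when $\tau>2$ it is $\ell^{1-\alpha}$, not anything involving $\gamma$. The qualitative conclusion (summability, hence a positive probability per site) is unaffected, since both exponents are below $-1$ in this regime, but the stated exponent is only correct in the subregime $\tau<2$.

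Finally, note the paper phrases its lower bound (Proposition~\ref{prop:LBag22}) as a statement in probability rather than almost surely, so a Borel--Cantelli upgrade is not actually attempted there; a bare second--moment estimate would only give an $O(1/N)$ failure probability, which is not summable, so if you do want the a.s.\ version you would need a stronger concentration input than the one you sketch.
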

 The results of these theorems are summarized in the phase-diagram of Figure~\ref{fig:fd_mix}, where we use the symbols $\lesssim$ and $\gtrsim$ to omit corrections with slowly varying functions, see Section \ref{sec:not}.
\begin{remark}\label{yri}
We point out that there is a close relation between $\tmix(G_N)$ and the spectral gap of the chain. Recall that, letting $1=\lambda_1(N)>\,\lambda_2(N)\geq\dots\geq\lambda_N(N)$ be the eigenvalues of the matrix $P=(P(x,y))_{x,y\in\T_N}$, the spectral gap is defined as $1-\lambda_2(N)$. Then (see for example \citet[Theorems 12.3 and 12.4]{LP17}) it holds 
	\begin{align*}
	\Big(\frac{1}{1-\lambda_2(N)}-1\Big)\log 2
		\leq\tmix(G_N)
		\leq \frac{1}{1-\lambda_2(N)}\log\Big(\frac 4{\pi_{\min}}\Big)
	\end{align*}
	with $\pi_{\min}:=\min_{x\in\T_N}\pi(x)$. In particular, all our results can be read in terms of the spectral gap rather then the mixing time of the chain. 
\end{remark}

\subsection{Notation}\label{sec:not}

We will use the notation $a\wedge b:=\min\{a,\,b\}$ for $a,\,b\in\R$ as well as $a\vee b:=\max\{a,\,b\}$. The use of the symbols $c,\,c_1,\,c_2\ldots$ refers to positive constants whose value may change from line to line. Their value might depend on the model parameters $\alpha$ and $\tau$, but will not depend from other variables (for example, $N$) unless specified otherwise. 

The symbols $\lesssim$ and $\gtrsim$ indicate respectively $\leq $ and $\geq$ eventually up to a slowly varying function in $N$. Actually, except for the upper bound in Theorem \ref{thm:gamma_tra_12}, all the $\lesssim$ and $\gtrsim$ refer to polylogarithmic corrections.

\section{Preliminary results}\label{preliminary}

Let us recall the fundamental inequality
\begin{equation}\label{exp_ineq}
 \exp(x)\ge x+1,\quad x\in \R
\end{equation}
which we will use frequently.% to bound the link probabilities~\eqref{eq:con_proba}.
%\begin{align}\label{expecteddegree}
%\E[D_0|W_0]=...
%\end{align}

\subsection{The Cheeger constant}\label{sec:cheeger}
We now give a quick overview on the Cheeger constant and its relation to the mixing time. We recall that for the lazy simple random walk on a graph $G=(V,E)$, the bottleneck ratio of a subset $S\subset V$ 
of its state space is defined as~\cite[Remark~7.2]{levinperes}
\begin{align}\label{cmic}
	\Phi(S)=\frac{D_{S,S^c}}{D_S}
\end{align}
	and the Cheeger constant of the chain is defined as
	\begin{align}\label{cheeger}
	\Phi_*=\min_{S:\,\pi(S)\leq 1/2}\Phi(S)
	\end{align}
where $\pi$ is the invariant measure of the walk.
The following result links the mixing time of the chain and its Cheeger constant~\cite[pg. 58]{sinclair2012}:
\begin{align}\label{tmixcheeger}
\frac{1-\Phi_*}{2\Phi_*}\log 4
	\leq \tmix(G) 
	\leq \frac{2}{\Phi_*^2}\log \left(\frac4{\pi_{\min}}\right)
\end{align}
where $\pi_{\min}:=\min_{x\in V} \pi(x)$.

\subsection{Multicommodity flows}\label{paths}
Consider a reversible Markov chain on the vertices $V$ of a graph $G$ with its set of unoriented edges $E$, transition matrix $P$ and reversible measure $\pi$. Let
\[
 \cE(G)=\{(x,\,y)\in V\times V:\,{\{x,y\}\in E}\}
\]
be the set of oriented edges obtained by doubling the unoriented edges.
%We denote as $\pi$ its stationary measure (or $\pi_G$ when we want to stress the graph dependence). 
An $\cE$-path from $x\in V$ to $y\in V$ is a sequence $p=e_1 \,e_2\,\dots\,e_m$  of edges in $\cE(G)$  such that $e_1=(x,\,x_1),\,e_2=(x_1,\,x_2),\,\ldots,\,e_m=(x_{m-1},\,y)$ for some vertices $x_i \in V$. The length of a path $p$ is indicated as $|p|$. The set of all paths is called $\Path$ and the set of all simple paths from $x$ to $y$ is called $\Path(x,\,y)$. We will also use the notation $\Path(G)$ and $\Path(x,\,y,\,G)$ when we need to specify in which graph the path is taken.
A flow is a function $f:\Path\to[0,\,1]$ which satisfies
\[
 \sum_{p\in \Path(x,\,y)}f(p)=\pi(x)\pi(y)\qquad \forall x,\,y\in V,\,x\neq y\,.
\]
Extending the definition of $f$ also to oriented edges, we let the edge load of an edge $e\in \cE(G)$ be
\begin{equation}\label{eq:edge_load}
 f(e):=\sum_{p\in \Path\atop p\ni e}f(p)|p|\,.
\end{equation}
The congestion of a flow $f$  is
\begin{equation}\label{eq:def_congestion}
 \rho(f):=\max_{(a,\,b)\in \mathcal E(G)}\frac{f((a,\,b))}{\pi(a)P(a,\,b)}\,.
\end{equation}
\citet{sinclair1992} establishes the relation between congestion rate and mixing time of the chain (notice that in \citet{sinclair1992} the congestion of a flow $f$ as defined in \eqref{eq:def_congestion} is called $\overline\rho(f)$, while the letter $\rho$ is used for a related  quantity). The author shows 
(see Theorem~5' and Proposition 1 therein) that for any flow $f$ 
\begin{equation}\label{eq:comb_gap}
 \tmix(G)\le \rho(f)\log(4|\cE(G)|)
\end{equation}
and that there exists a flow $f^*$ (see Theorem 8 and Remark (a) therein) such that
\begin{equation}
 {\rho(f^*)}%\max_{(a,b)\in\cE(G)} \frac{\sum_{p\in \Path:\, p\ni (a,b)}f^*(p)}{\pi(a)P(a,b)}
 \le 32\, \big(\tmix(G)\big)^2\,.\label{eq:sinclair}
\end{equation}

\subsection{A simple lemma} The following lemma will be used for the upper bound of Theorem \ref{thm:gamma_tra_12} to pin the right polynomial order of the mixing time in that regime. It should be a well-known fact about slowly varying functions, but we could not find a reference. 
\begin{lemma}\label{okaramata}
	Let $(X_N)_{N\in\N}$ be a sequence of random variables such that, for each $\varepsilon>0$, 
	\begin{align*}
	\P(X_N>N^{\varepsilon})\xrightarrow{N\to\infty}0\,.
	\end{align*}
Then there exists a slowly varying function $\ell:\,\N\to(0,\infty)$ such that
\begin{align*}
\P(X_N>\ell(N))\xrightarrow{N\to\infty}0\,.
\end{align*}	
\end{lemma}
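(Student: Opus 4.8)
The statement is a standard ``diagonal extraction'' result for slowly varying functions, and the plan is to build $\ell$ explicitly by a truncation-and-interpolation argument. First I would fix a sequence $\varepsilon_k \downarrow 0$, say $\varepsilon_k = 1/k$, and apply the hypothesis with each $\varepsilon_k$: there exists $N_k$ such that $\P(X_N > N^{1/k}) \le 1/k$ for all $N \ge N_k$. Without loss of generality the $N_k$ are strictly increasing and $N_1 = 1$. The natural candidate is then to set $\ell(N) := N^{1/k}$ on the block $N_k \le N < N_{k+1}$, i.e.\ $\ell(N) = N^{1/k(N)}$ where $k(N)$ is the unique index with $N_{k(N)} \le N < N_{k(N)+1}$.

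\textbf{Verifying the probability bound.} For $N$ in the block $[N_k, N_{k+1})$ we have $\ell(N) = N^{1/k} \ge N^{1/k}$ with exponent exactly $1/k$, so $\P(X_N > \ell(N)) = \P(X_N > N^{1/k}) \le 1/k$ by the choice of $N_k$. Since $k(N) \to \infty$ as $N \to \infty$ (because the $N_k$ are strictly increasing, each block is finite), we get $\P(X_N > \ell(N)) \le 1/k(N) \to 0$, which is the desired conclusion.

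\textbf{Verifying slow variation.} This is the step I expect to require the most care. I need $\ell(sN)/\ell(N) \to 1$ for each fixed $s > 0$; by monotonicity considerations it suffices to treat $s > 1$ (the case $s < 1$ follows by replacing $N$ with $N/s$ and taking reciprocals, and $s=1$ is trivial). For large $N$, both $N$ and $sN$ eventually lie in high-index blocks, and the key point is that $k(N) \to \infty$ while $k(sN) \ge k(N)$ (since $k$ is nondecreasing and $sN > N$). Writing $\ell(N) = \exp\big(\tfrac{\log N}{k(N)}\big)$ and $\ell(sN) = \exp\big(\tfrac{\log s + \log N}{k(sN)}\big)$, we have
\begin{align*}
\log \frac{\ell(sN)}{\ell(N)} = \frac{\log s + \log N}{k(sN)} - \frac{\log N}{k(N)}.
\end{align*}
The term $\frac{\log s}{k(sN)} \to 0$ trivially. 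For the remaining part $\log N \big(\tfrac{1}{k(sN)} - \tfrac{1}{k(N)}\big)$, one must ensure $\log N$ does not grow too fast compared to the gap $\tfrac{1}{k(N)} - \tfrac{1}{k(sN)}$. This is where I would tune the construction: instead of taking the $N_k$ as given, replace them by $N_k' := \max\{N_k,\, 2^{2^k}\}$ (still valid, since enlarging $N_k$ only strengthens the tail bound on that block), so that on the block $[N_k', N_{k+1}')$ one has $\log N \le \log N_{k+1}' $, and for $N$ large and $sN$ in the same block the gap term vanishes, while if $sN$ crosses into block $k+1$ then $\tfrac{1}{k} - \tfrac{1}{k+1} = \tfrac{1}{k(k+1)}$ and $\log N \le 2^{k+1}\log 2$, so $\log N \cdot \tfrac{1}{k(k+1)} \to 0$ is false — hence the spacing must be made much more aggressive, e.g.\ $N_k' := \max\{N_k, \exp(\exp(k^3))\}$ or simply $N_k'$ chosen recursively so that $\log N_{k+1}' / (k(k+1)) $ is small uniformly, which is always possible. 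The main obstacle is thus purely this bookkeeping: picking the block boundaries sparse enough that $\log N / (k(N)(k(N)+1))$ stays bounded (indeed $\to 0$) across block transitions, and I would phrase the construction recursively to make this automatic. With that, $\log(\ell(sN)/\ell(N)) \to 0$, hence $\ell$ is slowly varying, completing the proof.
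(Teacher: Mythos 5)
There is a genuine gap, and it is exactly at the step you flagged as the delicate one. Your candidate $\ell(N) = N^{1/k(N)}$ is slowly varying only if $\log N_{k+1}'/(k(k+1))\to 0$, but you cannot force this. The thresholds $N_k$ are dictated by the hypothesis and may grow arbitrarily fast (e.g.\ if $\P(X_N > N^{1/k}) < 1/k$ only starts holding at $N_k = \exp(\exp(k^2))$, then $\log N_{k+1}/(k(k+1))\to\infty$, and $N^{1/k(N)}$ visibly drops by a factor $(N_{k+1}')^{1/(k(k+1))}\to\infty$ across each block boundary, so it is not slowly varying). Your proposed remedy goes in the wrong direction: replacing $N_k$ by the \emph{larger} $N_k' = \max\{N_k, \exp(\exp(k^3))\}$ only \emph{increases} $\log N_{k+1}'$, making the offending ratio worse, not better; and the claim that one can choose $N_k'$ recursively so that $\log N_{k+1}'/(k(k+1))$ is uniformly small is false, because $N_k'\ge N_k$ is a hard constraint. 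A recursive repair is possible in principle, but it requires adapting the exponents $\varepsilon_k$ (making them decrease very slowly so that $(\varepsilon_k-\varepsilon_{k+1})\log N_{k+1}'\to 0$) rather than only adjusting the block boundaries, and this balancing act is more delicate than your sketch suggests.

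The paper sidesteps this entirely by not attempting to make $N^{\varepsilon(N)}$ itself slowly varying; instead it \emph{dominates} it by a slowly varying function furnished by Karamata's representation theorem. Concretely, with $\varepsilon(N)$ the nonincreasing step function you construct, set $\ell(N) := \exp\bigl(\sum_{k=1}^{N}\varepsilon(k)/k\bigr)$. Since $\varepsilon(N)\to 0$, Karamata's representation theorem gives that $\ell$ is slowly varying; and since $\varepsilon$ is nonincreasing, $\sum_{k\le N}\varepsilon(k)/k \ge \varepsilon(N)\sum_{k\le N}1/k \ge \varepsilon(N)\log N$, so $\ell(N)\ge N^{\varepsilon(N)}$ and $\P(X_N>\ell(N))\le\P(X_N>N^{\varepsilon(N)})\to 0$. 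The function $\ell$ can be vastly larger than $N^{\varepsilon(N)}$ near the block boundaries, which is precisely what absorbs the jump that defeats your direct ansatz; this is the idea you would need to add.
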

\begin{proof}
	Take a sequence $(\varepsilon_i)_{i\in\N}$ such that $\varepsilon_i\downarrow 0$. We know that for all $i\in \N$ there exists a $N(i)\in\N$ such that
	\begin{align*}
	\P(X_N>N^{\varepsilon_i})<\varepsilon_i\qquad \forall N\geq N(i)\,.
	\end{align*}
Calling now $\varepsilon(N):=\{\varepsilon_i:\,N(i)\leq N< N(i+1)\}$, we obtain, for $N$ large enough,
	\begin{align*}
\P(X_N>N^{\varepsilon(N)})<\varepsilon(N)\,.
\end{align*}
Since $\lim_{N\to\infty}\varepsilon(N)=0$, we are done if we find a slowly varying function $\ell$ such that 
$$
N^{\varepsilon(N)}\leq \ell(N)\,.
$$ 
By Karamata's representation theorem (see e.g.~\citet[Theorem~1.3.1]{BGTT89}), the function $\ell(N)=\exp\{\sum_{k=1}^N\theta(k)/k\}$ is a slowly varying function as long as $\lim_{N\to\infty}\theta(N)=0$. So it is sufficient to find a function $\theta$ with $\lim_{N\to\infty}\theta(N)=0$ for which
\begin{align*}
\varepsilon(N)\log N\leq\sum_{k=1}^N\frac{\theta(k)}{k}\,.
\end{align*}
This is clearly possible by choosing a $\theta$ which decays to $0$ slowly enough.
\end{proof}

\subsection{Preliminary results on SFP} We will use in different places the following bound on linking probabilities for SFP.
\begin{lemma}\label{bosh}
	For $x,y\in\T_N$ with $\|x-y\|>1$, it holds
	\begin{align}\label{gpand}
	\P(x\leftrightarrow y)\leq\begin{cases}
	c\, \|x-y\|^{-\alpha}\qquad&\mbox{if }\,\tau>2\\
	c\, \|x-y\|^{-\gamma}(\log \|x-y\|)^2 &\mbox{if }\,\tau\leq 2
	\end{cases}
	\end{align}
	where $c>0$ is a constant that only depends on $\alpha$ and $\tau$.
\end{lemma}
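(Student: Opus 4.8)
The plan is to compute, or rather bound from above, the unconditional connection probability $\P(x\leftrightarrow y)$ by integrating the conditional probability \eqref{eq:con_proba} against the product law of the two Pareto weights $W_x,W_y$. Write $r:=\|x-y\|>1$ and set $Z:=W_xW_y$. Using \eqref{exp_ineq} in the form $1-\e^{-t}\le t\wedge 1$, we get
\begin{align*}
\P(x\leftrightarrow y)
=\E\!\left[1-\e^{-Z r^{-\alpha}}\right]
\le \E\!\left[\big(Z r^{-\alpha}\big)\wedge 1\right]
=\P\!\left(Z> r^{\alpha}\right)+r^{-\alpha}\,\E\!\left[Z\,\1{Z\le r^{\alpha}}\right].
\end{align*}
So everything reduces to understanding the distribution of the product $Z=W_xW_y$ of two i.i.d.\ Pareto$(\tau-1)$ variables: I need the tail $\P(Z>s)$ and the truncated first moment $\E[Z\1{Z\le s}]$, for $s=r^\alpha$.

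For the tail of the product, the standard fact is that $\P(Z>s)$ is regularly varying of the same index $\tau-1$ as each factor but with an extra logarithmic factor: more precisely one shows $\P(Z>s)\asymp s^{-(\tau-1)}\log s$ for $s$ large (this comes from conditioning on $W_x=u$ and integrating $u^{-(\tau-1)}\wedge 1$ times the density of $W_y$ — the contribution $1\le u\le s$ of the smaller variable produces the $\log s$). Plugging $s=r^\alpha$ gives the term $c\, r^{-\alpha(\tau-1)}\log(r^\alpha)=c\,r^{-\gamma}\log r$ up to constants. For the truncated mean $\E[Z\1{Z\le s}]\le \E[Z\1{W_x\le s,\,W_y\le s}]=\big(\E[W\1{W\le s}]\big)^2$, and here the behavior of $\E[W\1{W\le s}]$ dichotomizes exactly at $\tau=2$: if $\tau>2$ the weight has finite mean, so $\E[W\1{W\le s}]\le \E[W]=:c<\infty$ and the whole second term is bounded by $c\,r^{-\alpha}$; if $1<\tau<2$, then $\E[W\1{W\le s}]\asymp s^{2-\tau}$, so the second term is $\asymp r^{-\alpha} (r^\alpha)^{2(2-\tau)}=r^{\alpha(3-2\tau)}$, and the borderline $\tau=2$ gives a logarithm, $\E[W\1{W\le s}]\asymp \log s$, hence second term $\asymp r^{-\alpha}(\log r)^2$.

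Now I combine the two contributions in each regime. For $\tau>2$: tail term $\lesssim r^{-\gamma}\log r$ with $\gamma=\alpha(\tau-1)>\alpha$, so it is dominated by $r^{-\alpha}$; truncated-mean term $\lesssim r^{-\alpha}$; total $\le c\, r^{-\alpha}$, which is the first line of \eqref{gpand}. For $\tau\le 2$: the tail term is $\lesssim r^{-\gamma}\log r$; one checks the truncated-mean term $r^{\alpha(3-2\tau)}$ (or $r^{-\alpha}(\log r)^2$ at $\tau=2$) is at most of order $r^{-\gamma}(\log r)^2$ — indeed $\alpha(3-2\tau)\le -\alpha(\tau-1)=-\gamma$ is equivalent to $3-2\tau\le -( \tau-1)$, i.e.\ $\tau\ge 2$... which is the wrong direction, so one must be a little more careful: for $1<\tau<2$ the exponent $\alpha(3-2\tau)$ can exceed $-\gamma$, and in fact $\alpha(3-2\tau)$ versus $-\gamma=\alpha(1-\tau)$ gives $3-2\tau$ vs $1-\tau$, i.e.\ $3-2\tau\le 1-\tau\iff\tau\ge2$. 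So for $\tau<2$ one actually has $r^{\alpha(3-2\tau)}$ larger than $r^{-\gamma}$, meaning this bound as stated would fail — hence the right move is \emph{not} to bound $\E[Z\1{Z\le s}]$ by $(\E[W\1{W\le s}])^2$ but to keep the true product structure: decompose $\{Z\le s\}$ into $\{W_x\le\sqrt s,W_y\le\sqrt s\}$ and the two asymmetric pieces where one weight is large and the other small. In the asymmetric piece $W_x\in(\sqrt s, s]$, $W_y\le s/W_x$, the constraint forces $W_y$ small and a direct computation of $\E[W_x W_y\1{W_x>\sqrt s}\1{W_yW_x\le s}]$ against the joint density yields exactly an $s^{-(\tau-1)}\log s$-type contribution after multiplying by $r^{-\alpha}=s^{-1}$. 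This is the step I expect to be the main obstacle: getting the truncated product moment to match $r^{-\gamma}(\log r)^2$ rather than the crude (and too large) $(\E[W\1{W\le s}])^2$ requires splitting the truncation region according to which of the two weights carries the bulk, and carefully tracking the logarithm. Once that is done, both contributions in the $\tau\le 2$ case are $\lesssim r^{-\gamma}(\log r)^2$, the constant $c$ depending only on $\alpha,\tau$ through the Pareto normalization and the implicit constants in the regular-variation estimates, which proves \eqref{gpand}.
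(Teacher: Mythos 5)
Your plan is essentially the paper's: split $1-\e^{-Zr^{-\alpha}}$ into the tail event $\{Z>r^{\alpha}\}$ plus the linearized term $r^{-\alpha}\,\E[Z\1{Z\le r^{\alpha}}]$, compute the tail of the product of two Pareto variables to get $r^{-\gamma}\log r$, and then analyze the truncated moment separately across the three regimes $\tau>2$, $\tau=2$, $\tau<2$, landing on $r^{-\alpha}$, $r^{-\alpha}(\log r)^2$, $r^{-\gamma}\log r$ respectively. You correctly self-diagnose that the shortcut $\E[Z\1{Z\le s}]\le\big(\E[W\1{W\le s}]\big)^2$ is safe for $\tau\ge2$ but overshoots for $\tau<2$, since $\alpha(3-2\tau)>-\gamma$ there; the fix you identify (keep the product structure and truncate jointly) is exactly what the paper does by conditioning on $W_x$:
\begin{align*}
r^{-\alpha}\,\E\!\left[W_x\,\E\!\left[W_y\1{W_y\le r^{\alpha}W_x^{-1}}\,\middle|\,W_x\right]\1{W_x\le r^{\alpha}}\right]
= c\,r^{-\alpha}\,\E\!\left[W_x\1{W_x\le r^{\alpha}}\!\int_1^{r^{\alpha}W_x^{-1}}\!w^{1-\tau}\,\dif w\right]
\end{align*}
which directly gives $c\,r^{-\gamma}\log r^{\alpha}$ for $\tau<2$ without any splitting at $\sqrt{s}$. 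So the detour through the crude bound is unnecessary, and your "asymmetric piece" decomposition, while it would work, is a heavier way of arriving at the same integral. The only thing actually missing from your write-up is carrying out that one conditional-expectation computation instead of asserting it; once done, the constants depend only on $\alpha,\tau$ as claimed. Everything else — the tail estimate $\P(W_xW_y>r^{\alpha})\asymp r^{-\gamma}\log r$ and the regime dichotomy governed by whether $\gamma\gtrless\alpha$ as $\tau\gtrless 2$ — matches the paper.
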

\begin{proof}
	Abbreviate $d=\|x-y\|$ and calculate, using \eqref{exp_ineq},
	\begin{align}\label{avera}
	\P(x\leftrightarrow y)
	= \E\Big[1-{\rm e}^{-W_xW_y d^{-\alpha}}\Big]
	\leq \P(W_xW_y>d^\alpha) +\E\Big[W_xW_y d^{-\alpha}\mathbbm{1}_{W_xW_y\leq d^{\alpha}}\Big]\,.
	\end{align}
	The first term on the right hand side of \eqref{avera} is equal to
	\begin{align}\label{luigi}
	\P(W_xW_y>d^\alpha) &=\P(W_x>d^\alpha)+\E\Big[\P\big(W_y>d^{\alpha}W_x^{-1}|W_x\big)\mathbbm{1}_{W_x\leq d^\alpha}\Big] \nonumber\\
	%&=d^{-\gamma}+d^{-\gamma}\E\Big[W_x^{\tau-1}\mathbbm{1}_{W_x<d^{\alpha}}\Big]\\
	&=d^{-\gamma}+c_1 d^{-\gamma}\int_1^{d^\alpha}w^{-1}\,{\rm d}w
	=d^{-\gamma}(1+c_2\log d)\,.
	\end{align}
	For the second term in \eqref{avera} we must now distinguish between the different regimes. For $\tau>2$ the $W$'s have finite expectation, so 
	\begin{align}\label{shkhn}
	\E\Big[W_xW_y d^{-\alpha}\mathbbm{1}_{W_xW_y\leq d^{\alpha}}\Big]
	\leq c\,d^{-\alpha}
	\end{align}
	for some $c>0$ only depending on $\tau$. For $\tau\leq 2$
	we have
	\begin{align*}
	\E\Big[W_xW_y d^{-\alpha}\mathbbm{1}_{W_xW_y\leq d^{\alpha}}\Big]
	&=	d^{-\alpha}\E\Big[W_x\E\Big[W_y \mathbbm{1}_{W_y\leq d^{\alpha}W_x^{-1}}|W_x\Big]\mathbbm{1}_{W_x\leq d^{\alpha}}\Big]\\
	&=c d^{-\alpha}\E\Big[W_x\mathbbm 1_{W_x\leq d^{\alpha}}\int_1^{d^{\alpha}W_x^{-1}}w^{1-\tau}\,{\rm d}w \Big]\,.
	\end{align*}
	In particular, when $\tau=2$
	\begin{align}\label{bghra}
	d^{-\alpha}\E\Big[W_x\log(d^\alpha W_x^{-1})\mathbbm 1_{W_x\leq d^{\alpha}} \Big]
	\leq d^{-\alpha} \log d^\alpha \int_1^{d^\alpha} w^{-1}\,{\rm d}w
	=d^{-\alpha}(\log d^\alpha)^2\,,
	\end{align}
	while for $\tau<2$ 
	\begin{align}\label{bal}
	d^{-\alpha}\E\Big[W_x\mathbbm 1_{W_x\leq d^{\alpha}}(d^{\alpha}W_x^{-1})^{2-\tau}\Big]
	&=	d^{-\alpha}\E\Big[W_x\mathbbm 1_{W_x\leq d^{\alpha}}(d^{\alpha}W_x^{-1})^{2-\tau}\Big]
	=c \,d^{-\gamma}\log d^\alpha\,.
	\end{align}
	Since $\gamma>\alpha$ for $\tau>2$, $\gamma=\alpha$ for $\tau=2$ and $\gamma<\alpha$ for $\tau<2$, putting  \eqref{shkhn}, \eqref{bghra}, \eqref{bal} together with \eqref{luigi}  into \eqref{avera} yields the desired result.
\end{proof}

\section{Case {$\gamma<1$}: upper bound in Theorem \ref{gammino}}\label{sec:gammino}
In this Section we prove the upper bound of Theorem \ref{gammino}. First of all we will show that the result holds for a simplified model where the geometry of the torus plays no role.  Then we will prove that we can dominate the mixing time of the original model by the square of the mixing time of the simplified model up to a polylogarithmic factor.

%\subsection{The simplified model}
\subsection{Proof of Proposition \ref{gammino}}\label{simplifiedmodel}
Under the measure $\P$, we construct another random graph called $\sG_N$ with vertices on the torus $\T_N$. We use  the same random weights $\{W_x\}_{x\in \T_N}$ that we use to construct $G_N$, so the two random graphs are coupled. Then, we put an edge between two vertices in $\sG_N$ if and only if the product of their weights exceeds $N^\alpha(\log N)^2$, that is, for all $x\neq y\in \T_N$,
\begin{align*}
x\leftrightarrow y \quad \Leftrightarrow \quad W_xW_y\geq N^\alpha (\log N)^2\,.
\end{align*}
We refer to $\sG_N$ as the {\it simplified model}. We indicate with $\sD_x$ the degree of a node in $\sG_N$, $\sD_A$ the sum of the degrees of the vertices in $A\subseteq \T_N$, $\spi$ the invariant measure of the lazy simple random walk on $\sG_N$, that is $\spi(x)=\sD_x /\sD_{\sG_N}$. 
Proposition \ref{gmnsmplif} shows that the lazy simple random walk on $\sG_N$ mixes fast, while Proposition \ref{Ub_con_ridotto} tells us that the mixing time on $G_N$ is bounded by the square of that of $\sG_N$ up to a correction. We will prove the two propositions in the next sections and we point out that their combination  yields Theorem \ref{gammino}.

\begin{proposition}\label{gmnsmplif}
	Let $\gamma:=\alpha(\tau-1)<1$. There exists $c>0$ such that $\P$--a.s., for $N$ large enough,
\begin{align*}
\tmix(\sG_N)
	\leq(\log N)^c\,.
\end{align*}
\end{proposition}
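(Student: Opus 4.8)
The plan is to control the mixing time of the simplified model $\sG_N$ through its Cheeger constant, using the bound \eqref{tmixcheeger}: since $\log(4/\spi_{\min})$ is at most polylogarithmic (the degrees are at most $N$, so $\spi_{\min}\geq 1/N^2$), it suffices to show that $\Phi_*(\sG_N)\geq (\log N)^{-c}$ almost surely for large $N$. The key structural feature of $\sG_N$ is that it is a ``threshold graph on the weights'': $x\leftrightarrow y$ iff $W_xW_y\geq N^\alpha(\log N)^2$, so the connectivity of a vertex is governed entirely by its weight. In particular there is a set of \emph{hubs} $H$ — nodes with weight at least, say, $N^{\alpha/2}\log N$ — which are all mutually connected and connected to essentially every node of moderate weight, so $\sG_N$ restricted to $H$ together with its neighbourhood looks like a dense graph with small diameter.

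\medskip

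\textbf{Step 1: Slicing the vertices by weight.} Following the strategy announced in Section~\ref{sec:techniques}, I would partition $\T_N$ into slices $V_j:=\{x:\,W_x\in[2^{j},2^{j+1})\}$ for $j=0,1,\dots,O(\log N)$, plus the (typically empty for $\gamma<1$ one has to check) slice of the very largest weights. Since the $W_x$ are i.i.d.\ Pareto$(\tau-1)$, $\P(W_x\in[2^j,2^{j+1}))\asymp 2^{-j(\tau-1)}$, so $\E|V_j|\asymp N2^{-j(\tau-1)}$. By a Chernoff/Bernstein concentration bound (this is the role of Proposition~\ref{prop:VjConcentra} cited in the outline), with probability $1-N^{-2}$ all slices with $\E|V_j|\geq (\log N)^2$ satisfy $|V_j|=\Theta(\E|V_j|)$, and slices with smaller expectation contain at most $O(\log N)$ vertices; a Borel--Cantelli argument then makes this hold $\P$--a.s.\ for $N$ large. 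Simultaneously I would record that the degree in $\sG_N$ of a node $x\in V_j$ is, up to constants, $\sum_{k:\,j+k\gtrsim \alpha\log_2 N}|V_k|$, which is a deterministic function of the slice sizes up to the same concentration, and in particular $\sD_x$ is monotone in $j$.

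\medskip

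\textbf{Step 2: Bounding the bottleneck ratio of an arbitrary set $S$.} Unlike the usual use of the Cheeger constant, here I must lower bound $\Phi(S)=D_{S,S^c}/\sD_S$ for \emph{every} $S$ with $\spi(S)\leq 1/2$. The idea is that a node of weight $\geq N^{\alpha/2}\log N$ is connected to all nodes of weight $\geq N^{\alpha/2}\log N$ (hence a clique among high-weight nodes), and is connected to a positive fraction of the nodes in every slice $V_k$ with $k$ not too small. So: either $S$ contains at least half of the hub-set $H$, in which case $\sD_S\lesssim (\log N)\cdot |H|\cdot(\text{max degree})$ while $D_{S,S^c}$ is large because each hub in $S$ has $\Omega$ of its edges going to $S^c$ unless $S^c$ captures few low-weight nodes — and if $S^c$ has few nodes total then $\spi(S^c)$ is small, contradiction — or $S$ contains few hubs, in which case most of $\sD_S$ is carried by edges incident to $S^c\supseteq$ (most of $H$), again forcing $D_{S,S^c}/\sD_S$ to be bounded below by a reciprocal power of $\log N$. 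Making this dichotomy quantitative, tracking the slicewise contributions $D_{V_j\cap S, V_k\cap S^c}$, is the technical heart; one uses that the bipartite density between $V_j$ and $V_k$ is (after the weight threshold is met) either $0$ or exactly $1$, which removes all the edge-randomness from the problem once the slice sizes are controlled.

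\medskip

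\textbf{Main obstacle.} The delicate point is the \emph{uniformity over all $S$} in Step~2: one cannot simply test a single well-chosen set as in a lower-bound argument, and the naive union bound over the $2^N$ subsets is hopeless, so the argument must be reorganised so that $\Phi(S)$ depends on $S$ only through the numbers $|V_j\cap S|$ — finitely many integers, each in $\{0,\dots,|V_j|\}$ — reducing the problem to an optimisation over $O(\log N)$ bounded integer variables that can be analysed deterministically once the high-probability event on the slice sizes is fixed. Getting the weight-threshold bookkeeping right (in particular which pairs $(j,k)$ of slices are connected, given the $N^\alpha(\log N)^2$ threshold, and checking that the hub slices are non-empty precisely when $\gamma<1$) is where the condition $\gamma<1$ enters and is the place I would be most careful. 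Once $\Phi_*\geq(\log N)^{-c}$ is established on a $\P$--a.s.\ event, \eqref{tmixcheeger} gives $\tmix(\sG_N)\leq 2\Phi_*^{-2}\log(4/\spi_{\min})\leq(\log N)^{c'}$, completing the proof.
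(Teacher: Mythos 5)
Your overall framework matches the paper's: bound $\tmix(\sG_N)$ via the Cheeger inequality~\eqref{tmixcheeger}, slice $\T_N$ into weight bands, and exploit that the bottleneck ratio $\Phi(S)$ depends on $S$ essentially only through the slice intersections. Two details need repair, the second being a genuine gap. First, the claim that the bipartite density between $V_j$ and $V_k$ is ``either $0$ or exactly $1$'' is not quite true with dyadic slicing: products of weights from $V_j\times V_k$ range over a factor of $4$, so for a couple of values of $j+k$ near $\log_2\big(N^\alpha(\log N)^2\big)$ the density is strictly intermediate. The paper avoids this by anchoring the slices at the threshold itself (so that $\inf V_j\cdot\inf V_{j^c}=N^\alpha(\log N)^2$), which makes Observation~\ref{empln} a clean one-sided guarantee; the ambiguous pairs are then tracked only where they have to be (the paper's Case~C2).

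Second, and more seriously, your hub-dichotomy does not cover the degenerate configurations which turn out to be the hard part. If $S$ contains all but a $Q^{-2}$-fraction of every slice (the paper's Case~B2), the cut $\sD_{S,S^c}$ gives nothing; one must instead show $\spi(S)>1/2$ to exclude $S$ from the minimisation~\eqref{cheeger}. Your reasoning ``if $S^c$ has few nodes total then $\spi(S^c)$ is small, contradiction'' has a hole precisely here: degrees in $\sG_N$ are monotone in the weight and heavy-tailed, so a small $S^c$ that sits in the high-weight corner of each slice could a priori carry a constant fraction of $\sD_{\sG_N}$. The paper closes this with the auxiliary subsets $V_j^+$ (the top $\approx Q^{-1}$-fraction by weight of each $V_j$): the concentration bounds $|V_j^+|\geq Q^{-2}|V_j|$ and $\sD_{V_j}>2\sD_{V_j^+}$ (see \eqref{plna} and \eqref{D_large_bounds}) force $\sD_{V_j\cap S}\geq\sD_{V_j\setminus V_j^+}\geq\sD_{V_j}/2$ whenever $|V_j\cap S^c|\leq Q^{-2}|V_j|$, hence $\spi(S)>1/2$. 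Your Step~1 records the degree monotonicity within slices, which is the right ingredient, but never extracts this within-slice degree comparison, so as sketched the argument does not close.
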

\begin{proposition}\label{Ub_con_ridotto}
	$\P$--a.s., for all $N$ large enough, it holds
	\begin{align*}
	\tmix(G_N)\lesssim \big(\tmix(\sG_N)\big)^2\,.%(\log N)^c\,.
	\end{align*}
\end{proposition}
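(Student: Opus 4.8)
The plan is to compare the simple random walk on $G_N$ with the one on $\sG_N$ using the multicommodity flow machinery of Section~\ref{paths}. The key observation is that, by construction, every edge of $\sG_N$ is present in $G_N$ (if $W_xW_y\geq N^\alpha(\log N)^2$ then the connection probability \eqref{eq:con_proba} is overwhelmingly close to $1$ — well, more carefully, $\sG_N$ is not literally a subgraph of $G_N$, so one should instead \emph{couple} the two so that $\sG_N\subseteq G_N$ with high probability, or simply observe that $G_N$ contains enough of $\sG_N$'s structure). Granting $\sG_N\subseteq G_N$, I would take an optimal flow $f^*$ on $\sG_N$ realizing $\rho(f^*)\le 32\,(\tmix(\sG_N))^2$ via \eqref{eq:sinclair}, and push it forward to a flow on $G_N$: a path of $\sG_N$ is also a path of $G_N$, so $f^*$ is literally a flow on $G_N$ once we renormalize by the ratio of invariant measures $\pi_{G_N}/\overline\pi$.

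The main work is then to control the congestion $\rho(f^*)$ when computed with respect to $G_N$'s transition matrix and stationary measure rather than $\sG_N$'s. Writing out \eqref{eq:def_congestion} for $G_N$, for an oriented edge $(a,b)\in\cE(\sG_N)\subseteq\cE(G_N)$ one has
\begin{align*}
\frac{f^*((a,b))}{\pi_{G_N}(a)P_{G_N}(a,b)}
=\frac{f^*((a,b))}{\overline\pi(a)\overline P(a,b)}\cdot
\frac{\overline\pi(a)\overline P(a,b)}{\pi_{G_N}(a)P_{G_N}(a,b)}\,,
\end{align*}
and the first factor is at most $\rho_{\sG_N}(f^*)$. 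The correction factor equals $\frac{D_a(\sG_N)/D_{\sG_N}\cdot \frac{1}{2D_a(\sG_N)}}{D_a(G_N)/D_{G_N}\cdot\frac{1}{2D_a(G_N)}}=\frac{D_{G_N}}{D_{\sG_N}}$, i.e.\ precisely the ratio of the total number of edges. So I would need an almost-sure bound $D_{G_N}/D_{\sG_N}\le(\log N)^{c'}$ for $N$ large, which should follow from the same kind of Fuk--Nagaev / Bernstein concentration arguments used elsewhere in the paper (Lemma~\ref{bosh} gives $\E[D_{G_N}]$ up to polylog factors, and the degrees in $\sG_N$ are, for $\gamma<1$, already of polynomial size a.s., so the ratio is polylogarithmic). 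Then \eqref{eq:comb_gap} gives
\begin{align*}
\tmix(G_N)\le \rho_{G_N}(f^*)\log(4|\cE(G_N)|)
\le \frac{D_{G_N}}{D_{\sG_N}}\,\rho_{\sG_N}(f^*)\,\log(4|\cE(G_N)|)
\lesssim \big(\tmix(\sG_N)\big)^2\,,
\end{align*}
since $\log|\cE(G_N)|\le\log N^2$ is itself only a logarithmic factor, absorbed into $\lesssim$.

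The step I expect to be the genuine obstacle is making the domination $\sG_N\subseteq G_N$ rigorous (or circumventing it): $\sG_N$ is a \emph{deterministic} function of the weights, whereas edges of $G_N$ are only present with probability $1-\e^{-W_xW_y\|x-y\|^{-\alpha}}$, which for $\|x-y\|$ close to $N$ and $W_xW_y\approx N^\alpha(\log N)^2$ is $1-\e^{-(\log N)^2}$ — very close to $1$ but not equal. So strictly one needs a Borel--Cantelli argument showing that, a.s.\ for $N$ large, \emph{every} $\sG_N$-edge is realized in $G_N$; the number of candidate pairs is at most $N^2$ and each fails with probability at most $\e^{-(\log N)^2}=N^{-\log N}$, so a union bound over $N$ and over pairs is summable. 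Once this is in hand, the rest is the congestion bookkeeping above plus the polylogarithmic control of $D_{G_N}/D_{\sG_N}$, which I would establish by bounding $D_{G_N}$ from above via Lemma~\ref{bosh} and a concentration inequality, and $D_{\sG_N}$ from below by the presence of at least one hub of weight $\gtrsim N^{\alpha/2}(\log N)$ connecting to a positive density of vertices (this lower bound being precisely the content needed, and available, in the $\gamma<1$ regime).
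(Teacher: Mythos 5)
Your approach is the same as the paper's: couple $G_N$ and $\sG_N$ so that $E(\sG_N)\subseteq E(G_N)$ a.s.\ for $N$ large (Proposition~\ref{coscc}\,\ref{coscc:ii}), take an optimal flow $\overline f^*$ on $\sG_N$ realizing \eqref{eq:sinclair}, push it forward to $G_N$ with the renormalization $f(p)=\overline f^*(p)\cdot\frac{\pi(x)\pi(y)}{\spi(x)\spi(y)}$, and close via \eqref{eq:comb_gap}. Your observations about the Borel--Cantelli argument for the domination and about $\log|\cE(G_N)|$ being harmless are both correct and match the paper.

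However, there is a genuine bookkeeping gap in the congestion computation. You renormalize the flow by $\pi_{G_N}/\spi$ (which you must, to make it a valid flow for $G_N$), but then compute the congestion of $\overline f^*$, not of the renormalized $f$. When you carry the renormalization through, the edge load satisfies $f(e)\le\bigl(\max_z\pi(z)/\spi(z)\bigr)^2\,\overline f^*(e)$, and combined with your (correct) identity $\frac{\spi(a)\overline P(a,b)}{\pi_{G_N}(a)P_{G_N}(a,b)}=\frac{D_{G_N}}{\sD_{\sG_N}}$ this yields
\[
\rho_{G_N}(f)\le \frac{D_{G_N}}{\sD_{\sG_N}}\Bigl(\max_{z\in\T_N}\frac{\pi(z)}{\spi(z)}\Bigr)^{2}\rho_{\sG_N}(\overline f^*)
= \frac{\sD_{\sG_N}}{D_{G_N}}\Bigl(\max_{z\in\T_N}\frac{D_z}{\sD_z}\Bigr)^{2}\rho_{\sG_N}(\overline f^*)
\le \Bigl(\max_{z\in\T_N}\frac{D_z}{\sD_z}\Bigr)^{2}\rho_{\sG_N}(\overline f^*)\,.
\]
So the quantity that must be shown polylogarithmic is the \emph{pointwise} degree ratio $\max_{z}D_z/\sD_z$, not merely the aggregate ratio $D_{G_N}/\sD_{\sG_N}$ which you propose to bound. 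These are genuinely different: the maximum ratio can be much larger than the ratio of sums, and your heuristic justification (``there is at least one hub connecting to a positive density of vertices'') controls only the aggregate. The paper establishes the pointwise bound \eqref{corri} by comparing $\E[D_z|W_z]$ and $\E[\sD_z|W_z]$ via \eqref{expecteddegreeoriginal}--\eqref{expecteddegreesimplified} together with the concentration statements \eqref{degreeconcentrationoriginal}--\eqref{degreeconcentrationsimplified}; this works precisely because both conditional expectations scale like $W_z^{\tau-1}$ up to polylogarithmic factors, so the ratio is uniformly polylogarithmic across vertices. With that correction, your argument would be complete.
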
 
\subsection{Preliminary results on the simplified model}

Before proving Propositions \ref{gmnsmplif} and \ref{Ub_con_ridotto}, we collect hereby some facts about $G_N$ and $\sG_N$.

\begin{proposition}\label{coscc}
	$\P$-a.s., for $N$ large enough, the following properties hold:
	\begin{enumerate}[label=(\roman*)]
		\item\label{coscc:i} $\sG_N$ is fully connected.
		\item\label{coscc:ii} $E(\sG_N)\subseteq E(G_N)$.
		
%		${}$		\vspace{-\baselineskip}
%			$$E(\sG_N)\subseteq E(G_N)$$
		\item\label{coscc:iii}
		
		${}$		\vspace{-\baselineskip}
		\begin{align}
		\E[D_1\,|\,W_1]
		&\lesssim N^{1-\gamma} W_1^{\tau-1}\;\wedge N,
		\label{expecteddegreeoriginal}
		\\
		 \E[\sD_1\,|\,W_1]
		&=(N-1)\cdot\big\{N^{-\gamma} W_1^{\tau-1}(\log N)^{-2(\tau-1)}\;\wedge \; 1\big\}\label{expecteddegreesimplified}\,.
		\end{align}
		\item\label{coscc:iv}
		For all $x\in\T_N$
%		\begin{align}\label{degreeconcentrationsimplified}
%		\big | \sD_x - \E[\sD_x\,|\,W_x]\big| \leq { \E[\sD_x\,|\,W_x]^{1/2}} \log N\,.
%		\end{align}
		\begin{align}
				\big | D_x - \E[D_x\,|\,W_x]\big| 
		&\lesssim { \E[D_x\,|\,W_x]^{1/2}},\label{degreeconcentrationoriginal}\\
		\big | \sD_x - \E[\sD_x\,|\,W_x]\big| 
			&\lesssim { \E[\sD_x\,|\,W_x]^{1/2}}\,.\label{degreeconcentrationsimplified}
		\end{align}
		\item\label{coscc:v}
		
		${}$		\vspace{-\baselineskip}	%	\vspace{-\baselineskip}
%		\begin{align}
%		&|\sD_{\sG_N}-\E[\sD_{\sG_N}]|
%		\leq N^{\tfrac {3- \gamma} 2}\log N \label{zmb}\\
%		\mbox{with }\quad&\E[\sD_{\sG_N}]
%		=N^{2-\gamma}\mathrm O((\log N)^{3-2\tau})\qquad\label{ashp}
%		\end{align}
		\begin{align}
		&\big|\sD_{\sG_N}-\E[\sD_{\sG_N}]\big|
			\lesssim N^{(3-\gamma)/2}%\tfrac {3- \gamma} 2},
			 \label{zmb}\\
%			\mbox{with }\quad
%		&\E[\sD_{\sG_N}]
%		=N^{2-\gamma}\mathrm O((\log N)^{3-2\tau})\qquad\label{ashp}
			&N^{2-\gamma}\lesssim \E[\sD_{\sG_N}]\lesssim N^{2-\gamma}\,.\qquad
		\label{ashp}
\end{align}
		%		\item[(iv)] for all $j=1,\dots,\jmax$
		%		\begin{align*}
		%		\big||V_j|-\E[|V_j|]\big|&\leq\\
		%		\big||V_{j^c}|-\E[|V_{j^c}|]\big|&\leq\\
		%		|V_{j^c}^+|&\in
		%		\end{align*}
	\end{enumerate}
\end{proposition}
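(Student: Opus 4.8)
The plan is to prove all five items as almost sure statements in the standard way: for each one I bound the probability that the asserted property fails by a quantity summable in $N$ and conclude by the Borel--Cantelli lemma, so that everything reduces to one-weight Pareto tail computations and conditional concentration inequalities.

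For \ref{coscc:i} I would first extract a hub. Since $\gamma=\alpha(\tau-1)<1$ we have $\alpha<1/(\tau-1)$; fixing $\delta>0$ small enough that $1/(\tau-1)-\delta>\alpha$, the bound $\P\big(\max_{x\in\T_N}W_x<N^{1/(\tau-1)-\delta}\big)=(1-N^{-1+\delta(\tau-1)})^N\le\e^{-N^{\delta(\tau-1)}}$ is summable, so a.s.\ for $N$ large there is a vertex $x_\star$ with $W_{x_\star}\ge N^{1/(\tau-1)-\delta}$; then $W_{x_\star}W_y\ge N^{1/(\tau-1)-\delta}\ge N^{\alpha}(\log N)^2$ for every $y$ and $N$ large (a polynomial beats the polylogarithm), so $x_\star$ is adjacent to all other vertices in $\sG_N$, which is therefore connected. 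For \ref{coscc:ii}, if $\{x,y\}\in E(\sG_N)$ then $W_xW_y\ge N^{\alpha}(\log N)^2$ and, since $\|x-y\|\le N$, also $W_xW_y\|x-y\|^{-\alpha}\ge(\log N)^2$, so by \eqref{eq:con_proba} the edge $\{x,y\}$ is missing in $G_N$ with probability at most $\e^{-(\log N)^2}=N^{-\log N}$; conditioning on the weights, the family of such pairs has at most $N^2$ members and the corresponding $G_N$-edges are independent, so $\P\big(E(\sG_N)\not\subseteq E(G_N)\big)\le N^{2-\log N}$, which is summable.

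For \ref{coscc:iii}: conditionally on $W_1$, $\sD_1$ is a sum of $N-1$ i.i.d.\ indicators and \eqref{weightdistribution} gives $\P(W_y\ge N^{\alpha}(\log N)^2/W_1\mid W_1)=N^{-\gamma}W_1^{\tau-1}(\log N)^{-2(\tau-1)}\wedge 1$, which is exactly \eqref{expecteddegreesimplified}. For \eqref{expecteddegreeoriginal} the bound by $N$ is immediate from $D_1\le N-1$, while for the other bound I would rerun the computation in the proof of Lemma \ref{bosh} with $W_1$ frozen, which produces $\P(1\leftrightarrow y\mid W_1)\lesssim\big(\|1-y\|^{-\gamma}W_1^{\tau-1}\wedge 1\big)$ together with lower-order terms, and then sum over $y$ using $\gamma<1$, so that $\sum_{d\le N}d^{-\gamma}\asymp N^{1-\gamma}$; some care is needed to keep the estimate uniform in $W_1$ and to separate the regimes of small and large $W_1$ around the truncation level $N^{\alpha}$. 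Item \ref{coscc:iv} is a conditional Chernoff/Bernstein bound: given $W_x$ both $D_x$ and $\sD_x$ are sums of independent $\{0,1\}$-valued random variables with the stated conditional means, so $\P\big(|D_x-\E[D_x\mid W_x]|>t\mid W_x\big)\le 2\exp\{-t^2/(2\E[D_x\mid W_x]+\tfrac{2}{3}t)\}$; taking $t$ of order $\sqrt{\E[D_x\mid W_x]\log N}+\log N$ and union bounding over the $N$ vertices gives a summable failure probability, and since $\gamma<1$ one has $\E[\sD_x\mid W_x]\gtrsim N^{1-\gamma}(\log N)^{-2(\tau-1)}\to\infty$ and $\E[D_x\mid W_x]\ge 2$ (the forced edges), so this $t$ is absorbed by the $\lesssim$.

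Finally \ref{coscc:v}. For \eqref{ashp} I compute $\E[\sD_{\sG_N}]=N(N-1)\,\E[\min(bW_1^{\tau-1},1)]$ with $b:=N^{-\gamma}(\log N)^{-2(\tau-1)}$; since $\P(W_1^{\tau-1}\ge s)=s^{-1}$ for $s\ge 1$, one gets $\E[\min(bW_1^{\tau-1},1)]=b\big(1+\log(1/b)\big)\asymp b\log N$, so $\E[\sD_{\sG_N}]\asymp N^{2-\gamma}(\log N)^{3-2\tau}$ and $(\log N)^{3-2\tau}$ is slowly varying. For \eqref{zmb} I would not attack $\sD_{\sG_N}$ directly, since its summands $\1{W_xW_y\ge N^{\alpha}(\log N)^2}$ are strongly dependent; instead write $\sD_{\sG_N}=\sum_{x}\sD_x$, use \eqref{degreeconcentrationsimplified} to replace each $\sD_x$ by $\E[\sD_x\mid W_x]$ at a total cost $\lesssim\sum_x\E[\sD_x\mid W_x]^{1/2}\le\sqrt N\big(\sum_x\E[\sD_x\mid W_x]\big)^{1/2}$ by Cauchy--Schwarz, and then note that $\sum_x\E[\sD_x\mid W_x]$ is a sum of i.i.d.\ random variables bounded by $N-1$, with mean $\E[\sD_{\sG_N}]\asymp N^{2-\gamma}$ and variance $\lesssim N^{3}\E[\min(bW_1^{\tau-1},1)^2]\asymp N^{3-\gamma}$; a Bernstein inequality (whose variance term dominates its linear term precisely because $3-\gamma>(5-\gamma)/2$ when $\gamma<1$) then yields a summable deviation probability at scale $N^{(3-\gamma)/2}$ up to a polylogarithmic factor, and since $\sqrt N\cdot(N^{2-\gamma})^{1/2}=N^{(3-\gamma)/2}$ this closes \eqref{zmb}. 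I expect \eqref{zmb} to be the main obstacle: a plain second-moment bound on $\sD_{\sG_N}$ only controls the deviation at the worse scale $N^{(3-\gamma)/2+\delta}$, so one is forced through the two-step route above (item \ref{coscc:iv} plus an independent-sum Bernstein bound), and the sharp exponent $(3-\gamma)/2$ appears only thanks to $\gamma<1$.
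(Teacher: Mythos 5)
Your proposal is correct and follows essentially the same strategy as the paper: reduce everything to Pareto tail computations, conditional Bernstein-type concentration, and Borel--Cantelli. Items \ref{coscc:i}--\ref{coscc:iv} match the paper's argument almost step by step (in \ref{coscc:i} the paper simply demands a vertex with $W_x\ge N^\alpha(\log N)^2$, which makes it adjacent to everyone outright, rather than your hub at level $N^{1/(\tau-1)-\delta}$, but both thresholds work since $\gamma<1$; in \ref{coscc:iv} the paper packages the conditional Bernstein bound into its Proposition~\ref{prop:beb}, which is what you are doing by hand). The only genuine variation is in \eqref{zmb}. The paper writes $\sD_{\sG_N}\lesssim\sum_x\E[\sD_x\,|\,W_x]+\sum_x\E[\sD_x\,|\,W_x]^{1/2}$ and then applies Proposition~\ref{prop:beb} \emph{twice}: once to $\sum_x\E[\sD_x\,|\,W_x]$ with $A_N=N$, and once more to $\sum_x\E[\sD_x\,|\,W_x]^{1/2}$ with $A_N=N^{1/2}$, computing $\E\big[\E[\sD_1\,|\,W_1]^{1/2}\big]$ separately. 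You instead dispatch the error term by the one-line Cauchy--Schwarz bound $\sum_x\E[\sD_x\,|\,W_x]^{1/2}\le\sqrt{N}\big(\sum_x\E[\sD_x\,|\,W_x]\big)^{1/2}$ and reuse the concentration of the first sum, which lands on the same $N^{(3-\gamma)/2}$ exponent with one fewer concentration argument. Your closing observation is also correct: the single indicators $\1{W_xW_y\ge N^\alpha(\log N)^2}$ are only pairwise independent, a direct Chebyshev bound on $\sD_{\sG_N}$ gives the right scale $N^{(3-\gamma)/2}$ but a non-summable failure probability, so the two-step route through item~\ref{coscc:iv} (or equivalently through the conditionally i.i.d.\ structure) is genuinely what makes the almost-sure statement available. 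Two small things you should tidy up when writing this out: to make the conditional Bernstein bound in \ref{coscc:iv} deliver $\E[D_x\,|\,W_x]^{1/2}$ (and not just $\log N$) you should take $t\asymp(\log N)\sqrt{\E[D_x\,|\,W_x]\vee 1}$ rather than $\sqrt{\E\log N}+\log N$ so that the union-bound exponent is uniformly $\gtrsim(\log N)^2$; and note that for $\sD_x$ the conditional mean is at least of order $N^{1-\gamma}(\log N)^{-2(\tau-1)}$ by \eqref{expecteddegreesimplified} (not merely $\ge 2$), which is what makes the deterministic part of $t$ harmless there.
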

\begin{proof}
	The proof is postponed to Section \ref{appe} in the appendix.
\end{proof}

\subsection{Proof of Proposition \ref{gmnsmplif}}
In view of the upper bound in \eqref{tmixcheeger} and the fact that, for $N$ large enough, $\spi_{\min}\geq 1/N^2$ (using item \ref{coscc:v} in Proposition \ref{coscc}), it will be enough to show that there exists a $c>0$ such that, $\P$--a.s., for $N$ large enough, $\Phi_*(\sG_N)\geq (\log N)^{-c}$ where $\Phi_*(\sG_N)$ is the Cheeger constant (cfr.~\eqref{cheeger}) associated to the lazy simple random walk on $\sG_N$. 
Therefore, we will be done if we can prove the following:
there exists $c>0$ such that, $\P$--a.s.~for $N$ large enough, for each $S\subseteq \T_N$ with $\pi(S)<1/2$ we have 
\begin{align}\label{cheeger_lower}
\Phi(S)	=\frac{\sD_{S,S^c}}{\sD_S}
	\geq (\log N)^{-c}\,,
\end{align}
where in this Section $\Phi(S)$ indicates the bottleneck ratio of the set $S$ associated to the lazy simple random walk on $\sG_N$.

\begin{observation}	\label{osservazione}
	If a set $ S$ is such that $\sD_{S,S^c}\gtrsim N^{2-\gamma}$, we automatically have~\eqref{cheeger_lower}. In fact, $\P$--a.s.~for all $N$ large enough, we have that $\sD_S\leq \sD_{\sG_N}\lesssim N^{2-\gamma}$ by Proposition \ref{coscc} item \ref{coscc:v}.
\end{observation}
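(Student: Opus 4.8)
The plan is to read off the bound directly from the trivial monotonicity of $\sD_\cdot$ together with the concentration estimate already in hand. First I would observe that, since $S\subseteq\T_N$ and all degrees in $\sG_N$ are nonnegative, $\sD_S=\sum_{x\in S}\sD_x\le\sum_{x\in\T_N}\sD_x=\sD_{\sG_N}$. Next I would invoke Proposition \ref{coscc} item \ref{coscc:v}: the two-sided bound $N^{2-\gamma}\lesssim\E[\sD_{\sG_N}]\lesssim N^{2-\gamma}$ combined with the fluctuation estimate $|\sD_{\sG_N}-\E[\sD_{\sG_N}]|\lesssim N^{(3-\gamma)/2}$ yields, $\P$--a.s.~for $N$ large enough, $\sD_{\sG_N}\lesssim N^{2-\gamma}$. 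The point is that $(3-\gamma)/2<2-\gamma$ precisely when $\gamma<1$, which is our standing assumption, so the fluctuation term is of strictly smaller polynomial order and is harmlessly absorbed into a polylogarithmic correction.

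Combining these two facts with the hypothesis $\sD_{S,S^c}\gtrsim N^{2-\gamma}$, I would chain the inequalities: writing the hypothesis as $\sD_{S,S^c}\ge N^{2-\gamma}(\log N)^{-c_1}$ for some $c_1>0$ and the degree bound as $\sD_{\sG_N}\le N^{2-\gamma}(\log N)^{c_2}$ for some $c_2>0$, one gets
\[
\Phi(S)=\frac{\sD_{S,S^c}}{\sD_S}\ \ge\ \frac{\sD_{S,S^c}}{\sD_{\sG_N}}\ \ge\ \frac{N^{2-\gamma}(\log N)^{-c_1}}{N^{2-\gamma}(\log N)^{c_2}}\ =\ (\log N)^{-c}
\]
with $c:=c_1+c_2$, which is exactly \eqref{cheeger_lower}. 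All $\lesssim$ and $\gtrsim$ here hide only polylogarithmic factors, per Section \ref{sec:not}, so this identification of $c$ is legitimate.

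There is essentially no obstacle: the only thing worth spelling out is that the full-probability event on which $\sD_{\sG_N}\lesssim N^{2-\gamma}$ holds for all large $N$ comes from Proposition \ref{coscc}, which is a statement about the single graph $\sG_N$ and in particular does not depend on the choice of $S$. Hence the bound \eqref{cheeger_lower} holds simultaneously for every set $S$ with $\sD_{S,S^c}\gtrsim N^{2-\gamma}$ on one and the same event, which is what is needed for the Cheeger constant argument that follows.
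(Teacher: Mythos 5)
Your proof is correct and follows exactly the same route as the paper: bound $\sD_S$ trivially by $\sD_{\sG_N}$, then control $\sD_{\sG_N}\lesssim N^{2-\gamma}$ via Proposition \ref{coscc} item \ref{coscc:v}. Your extra remarks — that $(3-\gamma)/2<2-\gamma$ precisely because $\gamma<1$, and that the almost-sure event is independent of $S$ so the bound holds uniformly over all candidate sets — are both accurate and merely make explicit what the paper leaves implicit.
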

\smallskip
We partition  the set of vertices into ``weight slices''. Let 
$$
%\jmax:=(\tfrac \alpha 2 \log N+\log\log^2\! N)/\log\log N
\jmax:=2+\frac \alpha 2 \frac{\log N}{\log\log N}
\qquad\mbox{ and } \qquad\delta:=2^{-\frac{1}{\tau-1}}
$$ 
and define
\begin{align*}
V_j&:=\{x\in\T_N: \,W_x\in[N^{\alpha/2}(\log N)^j,N^{\alpha/2}(\log N)^{j+1})\}\quad &j=1,\dots,\jmax-1\\
V_{\jmax}&:=\{x\in\T_N: \,W_x\in[N^{\alpha}(\log N)^2,\infty)\}\\
%\{x\in\T_N: \,W_xk^{-\alpha/2}\in[\log^{2-j}k,\log^{3-j}k)\}\\
%V_j^+&:=\\
V_{j^c}&:=\{x\in\T_N: \,W_x\in[N^{\alpha/2}(\log N)^{2-j},N^{\alpha/2}(\log N)^{3-j})\}\quad &j=1,\dots,\jmax\\
V_j^+&:=\{x\in\T_N: \,W_x\in[\delta N^{\alpha/2}(\log N)^{j+1},N^{\alpha/2}(\log N)^{j+1})\}\quad &j=1,\dots,\jmax-1\\
V_{j^c}^+&:=\{x\in\T_N: \,W_x\in[\delta N^{\alpha/2}(\log N)^{3-j},N^{\alpha/2}(\log N)^{3-j})\}\quad &j=1,\dots,\jmax
\end{align*}
The $V_j$'s partition the vertices of the graph with weight larger than $N^{\alpha/2}\log N$, whereas the $V_{j^c}$'s partition the vertices with weight smaller than $N^{\alpha/2}(\log N)^2$ (notice that $V_1=V_{1^c}$, while all the other sets $V_j$ and $V_{j^c}$ are mutually disjoint).  $V_j^+$ and $V_{j^c}^+$ are subsets, respectively, of $V_j$ and $V_{j^c}$ with vertices that have a high weight (within their weight slice).
\begin{observation}\label{empln}
In the simplified model, for each $j$, all vertices in $V_{j^c}$ are connected to all vertices in $V_{\ell}$, with $\ell\geq j$, since the product of the weight of a vertex in $V_{j^c}$ and the weight of a vertex in $V_{\ell}$ is always larger than $N^{\alpha}(\log N)^2$.
\end{observation}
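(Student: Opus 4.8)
\textbf{Proof proposal for Observation \ref{empln}.} The plan is to verify the claim by a direct computation with the defining inequalities of the weight slices, exploiting that an edge of $\sG_N$ exists precisely when the product of the two endpoints' weights clears the threshold $N^{\alpha}(\log N)^2$. By construction of the simplified model, for distinct $x,y\in\T_N$ one has $x\leftrightarrow y$ in $\sG_N$ if and only if $W_xW_y\geq N^{\alpha}(\log N)^2$. Hence it suffices to show $W_xW_y\geq N^{\alpha}(\log N)^2$ for every $x\in V_{j^c}$ and every $y\in V_{\ell}$ with $\ell\geq j$; note this is a deterministic statement about the weights, valid for any realization (no almost-sure or large-$N$ caveat is needed beyond $\log N\geq 1$).

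I would split according to whether $\ell\leq\jmax-1$ or $\ell=\jmax$. In the first case, membership $x\in V_{j^c}$ gives $W_x\geq N^{\alpha/2}(\log N)^{2-j}$ and $y\in V_{\ell}$ gives $W_y\geq N^{\alpha/2}(\log N)^{\ell}\geq N^{\alpha/2}(\log N)^{j}$, using $\ell\geq j$ and $\log N\geq 1$; multiplying the two lower bounds yields $W_xW_y\geq N^{\alpha}(\log N)^{2-j}(\log N)^{j}=N^{\alpha}(\log N)^{2}$, which is exactly the connection threshold. In the second case $y\in V_{\jmax}$ already means $W_y\geq N^{\alpha}(\log N)^2$, and since $W_x\geq 1$ (the Pareto law \eqref{weightdistribution} is supported on $[1,\infty)$; equivalently, the choice of $\jmax$ forces $N^{\alpha/2}(\log N)^{2-j}\geq 1$ for all $j\leq\jmax$), the product again clears the threshold.

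There is no genuine obstacle: the observation is a one-line consequence of the definitions. Its only role is to record a structural feature of $\sG_N$ that will be convenient when estimating bottleneck ratios in the proof of Proposition \ref{gmnsmplif}, namely that the high-weight vertices, together with everything of comparable or smaller ``co-weight'' slice $V_{j^c}$, form a clique in $\sG_N$.
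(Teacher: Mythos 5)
Your proof is correct and follows exactly the (implicit) argument in the paper: the observation is stated with its justification inlined in the ``since'' clause, and you have simply unpacked it by plugging in the defining lower bounds of $V_{j^c}$ and $V_\ell$ (treating $\ell<\jmax$ and $\ell=\jmax$ separately) and multiplying. The side remark that $N^{\alpha/2}(\log N)^{2-j}\geq 1$ for $j\leq\jmax$ by the choice of $\jmax$ is a nice sanity check and is also correct.
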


\begin{proposition}\label{prop:VjConcentra}
	Call $Q:=(\log N)^{\tau-1}$. $\P$--a.s.~for all $N$ large enough, the following holds:
\begin{enumerate}[label=(\roman*)]
	\item\label{trky}  for  $j=1,\dots,\jmax$
	\begin{align}
	\tfrac 12 N^{1-\gamma/2}Q^{-j}%(\log N)^{-j(\tau-1)}
		&\leq |V_j|
		\leq 2 N^{1-\gamma/2}Q^{-j}\label{urcurc}%(\log N)^{-j(\tau-1)}
		\\
		%=\E[\sD_{V_{j^c}}]=N^{1-\gamma}(N-1)(\log N)^{-2(\tau-1)}\log\log N\label{hakan}\\
	\tfrac 12 N^{1-\gamma/2}Q^{j-2}%(\log N)^{(j-2)(\tau-1)}
		&\leq|V_{j^c}|
		\leq 2 N^{1-\gamma/2}Q^{j-2}\label{trlero}%(\log N)^{(j-2)(\tau-1)}
		\\
		2 N^{1-\gamma/2}Q^{j-3}
		&\leq |V_{j^c}^+|  
		\leq 2 N^{1-\gamma/2}Q^{j-3}\nonumber%=\E[\sD_{V_{j^c}^+}]
		%=N^{1-\gamma}(N-1)(\log N)^{-2(\tau-1)}\log\tfrac 1\delta \label{sukur}\,.
	\end{align}
	and for $j=1,\dots,\jmax-1$
	\begin{align}\label{splsol}
	\tfrac 12 N^{1-\gamma/2}Q^{-(j+1)}%(\log N)^{-(j+1)(\tau-1)}
	&\leq|V_{j}^+| 
	\leq 2 N^{1-\gamma/2}Q^{-(j+1)}\,;%(\log N)^{-(j+1)(\tau-1)}
	\end{align}
	\item\label{via} {for $j=1,\dots, \jmax-1$},
	\begin{align}\label{D_large_bounds}
	\sD_{V_j}>2\sD_{V_j^+}\qquad \mbox{and}\qquad \sD_{V_{j^c}}>2\sD_{V_{j^c}^+}\,.
	\end{align}
\end{enumerate}	
\end{proposition}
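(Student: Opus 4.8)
The plan is to prove both items by a direct first-moment/concentration analysis of the weight slices, exploiting the fact that the weights are i.i.d.\ Pareto$(\tau-1)$, and then a Borel--Cantelli argument over $N$ to upgrade the in-probability estimates to almost sure ones.

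\textbf{Item \ref{trky}.}
First I would compute $\E|V_j|$, $\E|V_{j^c}|$, etc. For instance, by \eqref{weightdistribution},
\begin{align*}
\P\big(W_1\in[N^{\alpha/2}(\log N)^j,N^{\alpha/2}(\log N)^{j+1})\big)
&=\big(N^{\alpha/2}(\log N)^j\big)^{-(\tau-1)}-\big(N^{\alpha/2}(\log N)^{j+1}\big)^{-(\tau-1)}\\
&=N^{-\gamma/2}(\log N)^{-j(\tau-1)}\big(1-(\log N)^{-(\tau-1)}\big),
\end{align*}
so that $\E|V_j|=N\cdot N^{-\gamma/2}Q^{-j}(1-Q^{-1})$, which for $N$ large sits comfortably in the interval $[\tfrac 12 N^{1-\gamma/2}Q^{-j},\tfrac 34 N^{1-\gamma/2}Q^{-j}]$ since $Q=(\log N)^{\tau-1}\to\infty$. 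Analogous computations give the expected sizes of $V_{j^c}$, $V_j^+$, $V_{j^c}^+$ (for the ``$+$'' sets one uses $\delta^{\tau-1}=1/2$, so the lower endpoint contributes a factor $2$ and the width of the window is halved, which is exactly why the displayed bounds for $|V_{j^c}^+|$ have equal constants — it is an exact formula up to the choice of rounding). Then $|V_j|$ is a sum of $N$ i.i.d.\ Bernoulli's, so Chernoff/Bernstein gives $\P(\,||V_j|-\E|V_j||>\tfrac 14\E|V_j|\,)\le 2\exp(-c\,\E|V_j|)$. The smallest relevant expectation is that of $V_{\jmax}$ and of $V_{\jmax-1}^+$; by the choice $\jmax=2+\tfrac\alpha2\tfrac{\log N}{\log\log N}$ one checks $N^{1-\gamma/2}Q^{-\jmax}$ is still at least a small power of $N$ (roughly $N^{1-\alpha(\tau-1)/2}$ times polylog, up to the precise bookkeeping), so $\E|V_j|\ge N^{\eps'}$ uniformly in $j\le\jmax$, giving a bound $\exp(-cN^{\eps'})$; summing over the $\le c\log N$ values of $j$ and over $N$, Borel--Cantelli yields the almost sure statement. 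The case $j=\jmax$ needs the observation that $V_{\jmax}$ coincides with the definition used in the simplified model, and the computation there is literally \eqref{expecteddegreesimplified} evaluated appropriately.

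\textbf{Item \ref{via}.}
The inequality $\sD_{V_j}>2\sD_{V_j^+}$ is where the laziness of the slicing pays off. By Observation \ref{empln}, every vertex of $V_{j^c}$ (equivalently, of $V_{1^c}\cup\cdots$, i.e.\ all vertices with small weight) is connected in $\sG_N$ to every vertex of $V_\ell$ for $\ell\ge j$; in particular, $\sD_x\ge |V_{1^c}|$ for every $x\in V_j$ (take the neighbours coming from the lowest slice $V_{1^c}=V_1$, whose product with the weight of any $x\in V_j$ with $j\ge 1$ exceeds the threshold — more carefully one uses that any $x\in V_j$ has weight $\ge N^{\alpha/2}\log N$ and any $y$ with weight $\ge N^{\alpha/2}(\log N)^{1}$... so I should phrase it as: every pair with both weights $\ge N^{\alpha/2}(\log N)^{j_0}$ for suitable $j_0$ is connected). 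Thus $\sD_{V_j}\ge |V_j|\cdot|V_{1^c}|$, while $\sD_{V_j^+}\le |V_j^+|\cdot N$. Plugging the two-sided bounds from part \ref{trky}: $|V_j|\ge\tfrac 12 N^{1-\gamma/2}Q^{-j}$, $|V_{1^c}|\ge \tfrac 12 N^{1-\gamma/2}$, and $|V_j^+|\le 2N^{1-\gamma/2}Q^{-(j+1)}$, we get
\[
\frac{\sD_{V_j}}{\sD_{V_j^+}}\ \ge\ \frac{\tfrac12 N^{1-\gamma/2}Q^{-j}\cdot \tfrac 12 N^{1-\gamma/2}}{2N^{1-\gamma/2}Q^{-(j+1)}\cdot N}\ =\ \frac{Q}{8}N^{-\gamma/2}\cdot N^{1-\gamma/2}\cdot\frac{1}{N}\ ,
\]
hmm — this requires $N^{-\gamma/2}\cdot N^{1-\gamma/2}/N=N^{-\gamma}\to 0$, which is the wrong direction, so the crude bound $\sD_{V_j^+}\le |V_j^+|N$ is too lossy. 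The fix is to bound $\sD_{V_j^+}$ by the same mechanism from above: a vertex in $V_j^+$ has degree at most $\sum_{y}\1{W_y\ge N^\alpha(\log N)^2/W_x}$, and for $W_x< N^{\alpha/2}(\log N)^{j+1}$ this forces $W_y\ge N^{\alpha/2}(\log N)^{1-j}$ roughly, i.e.\ $y\in V_{(j)^c}\cup(\text{higher})$, so $\sD_x\le \sum_{\ell\ge \text{something}}|V_{\ell^c}|+\sum_\ell |V_\ell|$, dominated by the largest such slice. Then $\sD_{V_j}$ vs.\ $\sD_{V_j^+}$ becomes a comparison of $|V_j|$ vs.\ $|V_j^+|$ against the \emph{same} neighbourhood count (the slices $V_j$ and $V_j^+$ see essentially the same partners up to one slice of slack), and since $|V_j^+|\le 2N^{1-\gamma/2}Q^{-(j+1)}=Q^{-1}\cdot 2N^{1-\gamma/2}Q^{-j}\le 4Q^{-1}|V_j|$, which is $\le |V_j|/2$ for $N$ large because $Q\to\infty$, we are done; an extra factor from the ``one slice of slack'' in the partner count is itself a bounded power of $Q$ and is absorbed by choosing $N$ large. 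The same argument with $V_{j^c}$, $V_{j^c}^+$ in place of $V_j$, $V_j^+$ gives the second inequality in \eqref{D_large_bounds}, using $|V_{j^c}^+|=Q^{-1}|V_{j^c}|$ exactly (up to rounding).

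\textbf{Main obstacle.}
The delicate point is the degree bookkeeping in item \ref{via}: one must bound $\sD_{V_j^+}$ from above and $\sD_{V_j}$ from below \emph{using partner slices that nearly coincide}, so that the gain is the genuine ratio $|V_j^+|/|V_j|\asymp Q^{-1}\to 0$ rather than a spurious power of $N$. Concretely, the key lemma to isolate is: for $x$ with weight in a slice of index around $j$, $\sD_x(\sG_N)$ is, up to constants and up to one unit of slack in the slice index, equal to the cardinality of the union of all weight-slices above the ``complementary'' threshold $N^\alpha(\log N)^2/W_x$, and this cardinality is a nonincreasing (in $j$) quantity that differs by at most a factor $Q^{O(1)}$ between $V_j$ and $V_j^+$. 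Once this is set up, everything else is a routine concentration-plus-Borel--Cantelli computation of the kind already carried out in part \ref{trky}, and indeed the almost sure control of $\sD_{V_j}$ and $\sD_{V_j^+}$ can be reduced to the almost sure control of the slice cardinalities $|V_\ell|,|V_{\ell^c}|$ established there.
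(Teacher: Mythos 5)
Your treatment of item \ref{trky} is essentially the paper's: you compute the first moment of each slice size (correctly, including the observation that $\delta^{-(\tau-1)}=2$ makes the ``$+$'' windows exact), apply a Chernoff/Bernstein concentration with multiplicative error, union bound over the $\bO(\log N/\log\log N)$ values of $j$, and invoke Borel--Cantelli. The paper uses a slightly sharper additive error of size $\E[|A|]^{1/2}\log N$ under the hypothesis $\E[|A|]\ge(\log N)^2$, but since all the relevant expectations are polynomial in $N$ your multiplicative version works equally well. That part is fine.

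Item \ref{via} has a genuine gap, and you half-notice it yourself. Your ``fix'' claims that, because the partner slices seen by vertices in $V_j^+$ and in $V_j$ nearly coincide, the ratio $\sD_{V_j^+}/\sD_{V_j}$ reduces to the cardinality ratio $|V_j^+|/|V_j|\asymp Q^{-1}$ up to ``a bounded power of $Q$\ldots absorbed by choosing $N$ large''. But that extra factor is \emph{exactly} $Q$ and is not absorbed: $V_j^+$ consists precisely of the vertices in $V_j$ with the largest weights, and since for such $x$ one has $\E[\sD_x\,|\,W_x]\approx N^{1-\gamma}W_x^{\tau-1}(\log N)^{-2(\tau-1)}$ and $W_x^{\tau-1}$ sweeps a factor $Q$ across the $V_j$ window, the per-vertex degree at the top of $V_j$ is about $Q$ times that at the bottom. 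So the $Q^{-1}$ gain in cardinality is exactly cancelled by the $Q$ gain in per-vertex degree; your argument only gives $\sD_{V_j^+}\lesssim\sD_{V_j}$, not $\sD_{V_j^+}<\tfrac12\sD_{V_j}$. The actual separation, as in the paper, comes from the $\log$-scale \emph{width} of the window: since the degree scales as $W^{\tau-1}$ and the density as $W^{-\tau}$, the first moment of $\sD_{V_j}$ is proportional to $\int w^{-1}\,\dif w$ over the window, which equals $\log\log N$ for $V_j$ (endpoints $N^{\alpha/2}(\log N)^j$ and $N^{\alpha/2}(\log N)^{j+1}$) but only the constant $\log(1/\delta)=\tfrac{\log2}{\tau-1}$ for $V_j^+$. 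This yields $\E[\sD_{V_j}]\asymp N^{2-\gamma}Q^{-2}\log\log N$ versus $\E[\sD_{V_j^+}]\asymp N^{2-\gamma}Q^{-2}$; after concentrating both sums (again with Bernstein over the $N$ independent weights), the divergent $\log\log N$ factor gives \eqref{D_large_bounds}. You need this exact first-moment computation; the cardinality-ratio heuristic cannot close the argument.
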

\begin{proof}
	The proof is postponed to Appendix \ref{dib}.
\end{proof}

\medskip

Take now any $S\subseteq \T_N$ and call $Q:=(\log N)^{\tau-1}$ as in Proposition \ref{prop:VjConcentra}. 
We have three possibilities:
\begin{enumerate}
	\item[\bf Case A)] 
		$\quad|V_1\cap S|\in\Big(Q^{-2}|V_1|\,,\,\big(1-Q^{-2})|V_1|\Big)$;
	\item[\bf Case B)]$\quad|V_1\cap S|\geq\big(1-Q^{-2})|V_1|$;
	\item[\bf Case C)] $\quad|V_1\cap S|\leq Q^{-2}|V_1|$.
\end{enumerate}
%\begin{enumerate}
%	\item[\bf Case A)] $\quad|V_1\cap S|\in\Big(\frac{1}{\log N}|V_1|,\big(1-\frac{1}{\log N})|V_1|\Big)$;
%	\item[\bf Case B)]$\quad|V_1\cap S|\geq\big(1-\frac{1}{\log N})|V_1|$;
%	\item[\bf Case C)] $\quad|V_1\cap S|\leq\frac{1}{\log N}|V_1|$.
%\end{enumerate}
We analyze separately these three cases and show that \eqref{cheeger_lower} always  holds when $\pi(S)<1/2$. This will conclude the proof.

\medskip

{\bf Case A)} Since all points in $V_1$ are connected to all the other points in $V_1$, in this case we have
\begin{align*}
\sD_{S,\,S^c}
	\ge \sD_{V_1\cap S,\,V_1\cap S^c}
	\ge |V_1\cap S|\,|V_1\cap S^c|
	\gtrsim |V_1|^2
	\stackrel{\eqref{urcurc}}{\gtrsim} N^{2-\gamma}\,,
\end{align*}
so that we have  \eqref{cheeger_lower} thanks to Observation \ref{osservazione}.

\medskip

{\bf Case B)} Define 
$$
\jbar:=\inf\big\{j:\,|V_j\cap S|<\big(1-Q^{-2})|V_j|\big\} \,.
$$
We distinguish two subcases: either $\jbar\in\{2,\dots,\jmax\}$ or $\jbar=\infty$ (that is, the set on the right-hand side above~is empty).

\smallskip

\noindent {\bf Case B1)} Suppose that $\jbar\in\{2,\dots,\jmax\}$. Recall (cfr.~Observation \ref{empln}) that, by construction, all the points in $V_{\jbar-1}$ and all the points in $V_{\jbar}$ are connected to all the points in $V_{(\jbar-1)^c}$. There are now two possibilities:
if 
$|V_{(\jbar-1)^c}\cap S|  \leq  \big(1-Q^{-2})|V_{(\jbar-1)^c}|$, that is, most of the points of $V_{(\jbar-1)^c}$ are in $S^c$, then we can bound
\begin{align*}
\sD_{S,S^c}
	\geq|V_{\jbar-1}\cap S|\, |V_{(\jbar-1)^c}\cap S^c| 
	\geq (1-Q^{-2}) |V_{\jbar-1}|\cdot Q^{-2}|V_{(\jbar-1)^c}|\,,
\end{align*}
otherwise we bound
\begin{align*}
\sD_{S,S^c}
\geq |V_{(\jbar-1)^c}\cap S| \,|V_{\jbar}\cap S^c|\, 
\geq (1-Q^{-2}) |V_{(\jbar-1)^c}|\cdot Q^{-2}|V_{\jbar}|\,.
\end{align*}
Using \eqref{urcurc} and \eqref{trlero}, we see that in both cases \eqref{cheeger_lower} holds thanks to Observation \ref{osservazione}.

\smallskip

\noindent {\bf Case B2)} Suppose now that $\jbar=\infty$. If there exists $i\in\{1,\dots,\jmax\}$ such that $|V_{i^c}\cap S|  \leq  \big(1-Q^{-2})|V_{i^c}|$, then as before
\begin{align*}
\sD_{S,S^c}
\geq |V_{i}\cap S| \,|V_{i^c}\cap S^c|\, 
\geq (1-Q^{-2}) |V_{i}|\cdot Q^{-2}|V_{i^c}|
\end{align*}
and we obtain again \eqref{cheeger_lower} with \eqref{urcurc} and \eqref{trlero}. If such an $i$ does not exist, it means that $|U\cap S|\geq (1-Q^{-2}) |U|$ for all $U\in\{V_1,V_{1^c},\dots,V_{\jmax},V_{\jmax^c}\}$. We want to show now that this kind of set $S$ is not to be taken into account in \eqref{cheeger} since $\pi(S)>1/2$. This should intuitively be true because we are considering a set $S$ that contains the large majority of the points of each weight slice. More precisely we decompose (recall that $D_{V_1}=D_{V_{1^c}}$)
\begin{align*}
\overline\pi(S)
=\frac{\sD_{V_1\cap S}+\sum_{j=2,\dots,\jmax} \big(\sD_{V_j\cap S}+\sD_{V_{j^c\cap S}}\big)}{\sD_{V_1}+\sum_{j=2,\dots,\jmax} \big(\sD_{V_j}+\sD_{V_{j^c}}\big)}\,.
\end{align*}
Consequently, if we can prove that 
\begin{align}\label{wkknd}
\sD_{V_j\cap S}> \sD_{V_j}/2\quad \mbox{and} \quad \sD_{V_{j^c}\cap S}> \sD_{V_{j^c}}/2\qquad \forall j=1,\dots,\jmax 
\end{align}
it will follow that $\overline\pi(S)>1/2$. We will show \eqref{wkknd} just for the $D_{V_j}$'s, the proof for the $D_{V_{j^c}}$ being completely similar.

For $j=\jmax$, we notice that  most of the points of $V_{\jmax}$ are inside the set $S$. Since all of the points in $V_{\jmax}$ have the same degree (which is $N-1$), we clearly have $\sD_{V_{\jmax}\cap S}> \sD_{V_{\jmax}}/2$.
Take now $j\in\{1,\dots,\jmax-1\}$.
By \eqref{urcurc} and \eqref{splsol} we know that 
 \begin{align}\label{plna}
|V_{j}^+|
	\geq \tfrac{1}{4}Q^{-1}|V_j|
 	\geq Q^{-2}|V_j|\,.
 \end{align}
Since by assumption $|V_j\cap S^c|\leq Q^{-2}|V_j|$ %$|V_j\cap S|>\big(1-\tfrac 1{\log N}\big)|V_j|$ 
and since the vertices in $V_j^+$ are the vertices of $V_j$ with the largest weight and therefore with the largest degree, we deduce that $\sD_{V_j\cap S}\geq \sD_{V_j\setminus V_j^+}$. But $\sD_{V_j\setminus V_j^+}\geq \sD_{V_j}/2$ by \eqref{D_large_bounds}, so that all in all $\sD_{V_j\cap S}> \sD_{V_j}/2$.

\medskip

{\bf Case C)} We mirror the proof of Case B. Define
\begin{align*}
\jbarbar:=\inf\big\{ j\,:\,|V_j\cap S|\geq Q^{-2}|V_j| \big\}\,.
\end{align*}
As for $\jbar$, $\jbarbar$ can be either finite or not.

\smallskip

\noindent {\bf Case C1)}  Suppose $\jbarbar\in\{2,\dots,\jmax\}$. Then we can reason similarly to Case B1: 

If $|V_{(\jbarbar-1)^c}\cap S |\geq Q^{-2}|V_{(\jbarbar-1)^c}|$, then 
\begin{align*}
\sD_{S,S^c}
	\geq |V_{(\jbarbar-1)^c}\cap S| \,|V_{\jbarbar-1}\cap S^c|
	\geq Q^{-2}|V_{(\jbarbar-1)^c}|\cdot(1-Q^{-2}) |V_{\jbarbar-1}|\,,
\end{align*}
otherwise we bound
\begin{align*}
\sD_{S,S^c}
	\geq |V_{\jbarbar}\cap S|\,|V_{(\jbarbar-1)^c}\cap S^c| 
	\geq Q^{-2}|V_{\jbarbar}|\cdot(1-Q^{-2}) |V_{(\jbarbar-1)^c}|\,.
\end{align*}
In both cases, using \eqref{urcurc} and \eqref{trlero}, we arrive to \eqref{cheeger_lower} by Observation \ref{osservazione}\,.

\smallskip

\noindent {\bf Case C2)} Finally suppose that $\jbarbar=\infty$. Similarly to Case B2, if there exists $i\in\{1,\dots,\jmax\}$ such that $|V_{i^c}\cap S|  \geq  Q^{-2}|V_{i^c}|$, then 
\begin{align*}
\sD_{S,S^c}
	\geq |V_{i^c}\cap S|\,|V_{i}\cap S^c| \, 
	\geq Q^{-2}|V_{i^c}|\cdot(1-Q^{-2}) |V_{i}|
\end{align*}
and we obtain once more \eqref{cheeger_lower}. 
If such an $i$ does not exist, it means that $|U\cap S|\leq Q^{-2} |U|$ for all $U\in\{V_1,V_{1^c},\dots,V_{\jmax},V_{\jmax^c}\}$. 
Then, setting $V_{(\jmax+1)^c}=\emptyset$ to ease the notation, 
%and calling 
%$$
%\Xi:=\sum_{j=1}^{\jmax}|V_j\cap S|\Big(\sum_{i\leq j}|V_{i^c}|+\sum_{i=1}^{\jmax}|V_i|\Big)
%$$
on the one hand
\begin{align}\label{brnld}
\sD_{S,S^c}
	&\geq \sum_{j=1}^{\jmax}|V_j\cap S|\Big(\sum_{i\leq j}|V_{i^c}\cap S^c|+\sum_{i=1}^{\jmax}|V_i\cap S^c|\Big)\nonumber\\
%	\geq \big(1-\frac 1{\log N}\big)\Xi\,,
%	\nonumber\\
	&\geq \big(1-Q^{-2}\big)\sum_{j=1}^{\jmax}|V_j\cap S|\Big(\sum_{i\leq j}|V_{i^c}|+\sum_{i=1}^{\jmax}|V_i|\Big)
\end{align}
while on the other hand
\begin{align}\label{rmngg}
\sD_{S,S}
	&\leq \sum_{j=1}^{\jmax}|V_j\cap S|\Big(\sum_{i\leq j+1}|V_{i^c}\cap S|+\sum_{i=1}^{\jmax}|V_i\cap S|\Big)\nonumber \\
	%\leq  \frac 1{\log N}\Big(\Xi+\sum_{i\leq j}|V_{i^c}||V_{{(j+1)}^c}|\Big)\,.
	%\nonumber\\
	&\leq Q^{-2}\sum_{j=1}^{\jmax}|V_j\cap S|\Big(\sum_{i\leq j}|V_{i^c}|+|V_{{(j+1)}^c}|+\sum_{i=1}^{\jmax}|V_i|\Big)\,.
\end{align}
Since by \eqref{trlero} one has
$|V_{{(j+1)}^c}|\leq 4Q|V_{j^c}|$, one sees from \eqref{brnld} and \eqref{rmngg} that
\begin{align*}
\sD_S
	\leq \frac{Q^{-2}(4Q+1)}{1-Q^{-2}}\sD_{S,S^c}
	\leq Q^{-1/2}\sD_{S,S^c}\,.
\end{align*}
Therefore
\begin{align*}
\Phi(S)
	=\frac{ \sD_{S,S^c}}{ \sD_S}
	=\frac{ \sD_{S,S^c}}{ \sD_{S,S^c}+\sD_{S,S}}
	\geq c
\end{align*}
for some constant $c>0$. This concludes the proof of Proposition \ref{gmnsmplif}.

\subsection{Proof of Proposition \ref{Ub_con_ridotto}} 
For a realization of the simplified model $\sG_N$, let {$\overline f^*$ be a flow on $\sG_N$ for which \eqref{eq:sinclair} holds}.
 %the associated optimal flow (that is, the flow minimizing \eqref{eq:def_congestion}). 
We define now a flow on $G_N$ as follows: for each $x,y\in\T_N$ and each path $p$ from $x$ to $y$ we set 
\begin{align*}
f(p)=
\begin{cases}
\overline f^*(p)\frac{\pi(x)\pi(y)}{\spi(x)\spi(y)}\qquad&\mbox{if $p$ is allowed in $\sG_N$}\\
0\qquad&\mbox{otherwise.}
\end{cases}
\end{align*}
Recall by Proposition~\ref{coscc:ii} that $E(\sG_N)\subseteq E(G_N)$ almost surely for $N$ large enough. Since $\sG_N$ is connected for $N$ large enough, it is easy to check that $f$ is indeed a flow for $G_N$. 

We notice that, by the definition of $f$, for any edge $e\in E(G_N)\cap E(\sG_N)$, 
\begin{align*}
f(e)
	\leq \Big(\max_{z\in\T_N}\frac{\pi(z)}{\spi(z)}\Big)^2 \overline f^*(e)
\end{align*}
while for any edge $e\in G_N\setminus \sG_N$ one has $f(e)=0$.
Using also that $\sD_{\sG_N}\leq D_{G_N}$% and $\sD_x \leq D_x$ for all $x$
, we  can bound the congestion of $f$ by
\begin{align*}
\rho(f)
	\leq\frac{D_{G_N}}{\sD_{\sG_N}}\Big(\max_{z\in\T_N}\frac{\pi(z)}{\spi(z)}\Big)^2\rho(\overline f^*)
	%\leq \Big(\max_{z\in\T_N}\frac{D_z}{\sD_z}\Big)^2\rho(\overline f)
	{\stackrel{\eqref{eq:sinclair}}{\le} \Big(\max_{z\in\T_N}\frac{D_z}{\sD_z}\Big)^2\big(\tmix(\overline G_N)\big)^2}\,.
\end{align*}
By~\eqref{eq:comb_gap} and the fact that $\sD_{\sG_N}\lesssim N^{2-\gamma}$ (as can be seen by \eqref{zmb} and \eqref{ashp}), we are done if we prove that for some $c>0$
\begin{align}\label{corri}
\max_{z\in\T_N}\frac{D_z}{\sD_z}\leq (\log N)^c
\end{align}
almost surely for $N$ large enough.
%the maximum in the last line of the last display is almost surely smaller than $(\log N)^c$ for some $c>0$ and for all $N$ large. 
But this is true, since each node $z$ has the same weight $W_z$ in both models, so \eqref{expecteddegreeoriginal}, \eqref{expecteddegreesimplified}, \eqref{degreeconcentrationoriginal} and \eqref{degreeconcentrationsimplified} do the job.

\section{Case {$1<\tau<2,\,1<\gamma<2$}: Upper bound of Theorem~\ref{thm:gamma_tra_12}. First part.}\label{sec:importante}

We fix some
\begin{align}\label{miktti}
0<\varepsilon < (\gamma-1) \wedge \frac{2-\gamma}{2}
\end{align}
and let  
$$
L=L(N,\gamma,\varepsilon):=\lfloor N^{\gamma-1+\varepsilon}\rfloor
\qquad\mbox{and}\qquad
K=K(N,\gamma,\varepsilon)=\tfrac{N-\ell}{L}
$$
with $\ell= N \mod L$. We divide $\T_N$ into $K$  ``chunks'' $S_1,\dots,S_K$, with 
$$
S_i:=\{(i-1)L,\,\dots,\,iL-1\}
$$ 
for all $i=1,\,\dots,\,K-1$ and $S_K:=\{(K-1)L,\,\dots,\,N\}$. Notice that all chunks have length $L$ except for the last one with length $L+\ell< 2L$. 

\begin{remark}\label{rem:chunk_l}
	For simplicity of exposition we will consider from now on $\ell=0$, so that all the chunks have the same size. Our results will still hold, up to minor corrections, for other values of $\ell$. When the needed modifications will not be minor, we will explicitly indicate what has to be changed. 
\end{remark}
We call $\Gamma=\Gamma(N)$ the random graph on $\T_K$ obtained in the following way: for a realization of SFP $G_N$, 
 we put a single undirected edge between node $i$ and $j$ in $\Gamma$ %(and write $i\stackrel{\Gamma }{\leftrightarrow} j$) 
if the point $x_{\max}(i)$ with the largest weight in $S_i$ is connected to the point $x_{\max}(j)$ with the largest weight in $S_j$. That is, 
%such that $x\stackrel{G }{\leftrightarrow} y$.
\begin{align}\label{mipe}
x_{\max}(i)\stackrel{G_N }{\leftrightarrow} x_{\max}(j)
\qquad\Longrightarrow\qquad
	i\stackrel{\Gamma }{\leftrightarrow} j,\quad i\in\T_K.
\end{align}
We include in any case all edges between neighbouring $i$'s, too.

\begin{lemma}\label{keylemma}
	Let $\tGamma=\tGamma(N)$ be a SFP random graph on $\T_K$ with parameters 
	\begin{align}\label{tildami}
	\tilde \tau=\tau \qquad\mbox{and}\qquad \tilde\alpha=\frac{2-\gamma-\tfrac 32\varepsilon}{(\tau-1)(2-\gamma-\varepsilon)}\,.
	\end{align}
	It is possible to couple $G_N$ and $\tGamma$ so that
	\begin{align}\label{prlmn}
		\P(E(\Gamma)\supseteq E(\tGamma))=1\,.
	\end{align}
	Furthermore, it holds that
	\begin{align}\label{ttgl}
	\tmix(G_N)
	\lesssim \Delta_{G_N}|E(G_N)|\Big(\Pi_{G_N	}
	+R_{G_N,\tGamma}^2\frac{1}{|E(\tGamma)|}\big(\tmix(\tGamma)\big)^2\Big)\,,
\end{align}
	%for some universal constant $c>0$, 
	where
	\begin{align*}
		\Delta_{G_N}:= \max_{j\in\T_K}{\rm diam}(G^j_N)\qquad
		\Pi_{G_N}:=	\max_{j\in\T_K}\sum_{z\neq w\in S_j}\pi_{G_N}(z)\pi_{G_N}(w)\qquad
		R_{G_N,\tGamma}:= \max_{j\in\T_K}\frac{\pi_{G_N}(S_j)}{\pi_{\tGamma}(j)}
	\end{align*}
	with $G^j_N$ indicating the restriction of $G_N$ on $S_j$.
\end{lemma}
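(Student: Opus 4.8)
The plan is to establish \eqref{prlmn} first, by a coupling of weights, and then \eqref{ttgl} via a multicommodity flow argument that funnels the flow through the ``hubs'' $x_{\max}(i)$.

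\emph{The coupling \eqref{prlmn}.} Fix a chunk $S_i$ of size $L$ and let $W_{\max}(i):=\max_{x\in S_i}W_x$. Since the weights are i.i.d.\ Pareto$(\tau-1)$, for $t\ge 1$ we have $\P(W_{\max}(i)<t)=(1-t^{-(\tau-1)})^L$, so $W_{\max}(i)$ is comparable, after rescaling by $L^{1/(\tau-1)}$, to a Pareto-type variable: more precisely $\widetilde W_i:=L^{-1/(\tau-1)}W_{\max}(i)$ converges in distribution to a Fr\'echet law and, crucially, its tail $\P(\widetilde W_i\ge s)$ is bounded below by $c\,s^{-(\tau-1)}$ for $s$ in the relevant range (say $s\le L^{1/(\tau-1)}$, which covers all values arising at scale $N$). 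I would make this precise: there is a coupling of $W_{\max}(i)$ with a genuine Pareto$(\tau-1)$ variable $\widehat W_i$ such that $W_{\max}(i)\ge L^{1/(\tau-1)}\widehat W_i$ almost surely, using the standard inverse-CDF coupling together with the elementary inequality $(1-u)^L\le \e^{-Lu}\le \ldots$. Now for $i\ne j$ non-adjacent in $\T_K$, the torus distance in $G_N$ between $x_{\max}(i)$ and $x_{\max}(j)$ is at most $\|i-j\|_{\T_K}\cdot 2L$, hence the connection probability in $\Gamma$ is at least
\begin{align*}
1-\exp\Big\{-\frac{W_{\max}(i)W_{\max}(j)}{(2L\|i-j\|)^\alpha}\Big\}
\ge 1-\exp\Big\{-\frac{L^{2/(\tau-1)}\widehat W_i\widehat W_j}{(2L)^\alpha\|i-j\|^\alpha}\Big\}
=1-\exp\Big\{-\frac{c\,\widehat W_i\widehat W_j}{\|i-j\|^{\talpha\cdot(\tau-1)/(\tau-1)}}\cdots\Big\}
\end{align*}
where the exponent of $L$ is $\frac{2}{\tau-1}-\alpha$; dividing through one checks that, after absorbing the constant into the $\widehat W$'s (or noting the connection function is increasing), this dominates the SFP connection probability on $\T_K$ with weight exponent $\ttau=\tau$ and distance exponent $\talpha$ exactly as in \eqref{tildami} — the $\tfrac32\varepsilon$ versus $\varepsilon$ discrepancy is the slack needed to swallow the $(2L)^\alpha$ factor and the $K$-vs-$N$ bookkeeping ($K\asymp N^{2-\gamma-\varepsilon}$, $L^{2/(\tau-1)}\asymp N^{(\gamma-1+\varepsilon)2/(\tau-1)}$). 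Monotone coupling of the edges then gives $E(\Gamma)\supseteq E(\tGamma)$ a.s. The main care here is matching exponents; I expect no conceptual obstacle.

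\emph{The bound \eqref{ttgl}.} I would build a flow $f$ on $G_N$ out of a near-optimal flow $f^*$ on $\tGamma$ satisfying \eqref{eq:sinclair}, then apply \eqref{eq:comb_gap}. Given a demand pair $z\ne w$ in $G_N$, say $z\in S_i$, $w\in S_j$: route within $S_i$ from $z$ to the hub $x_{\max}(i)$ along a geodesic in $G^i_N$ (length $\le\Delta_{G_N}$), then across chunks from $i$ to $j$ in $\Gamma$ following the $\tGamma$-flow $f^*$ (legal since $E(\Gamma)\supseteq E(\tGamma)$, and each $\Gamma$-edge $\{i,j\}$ corresponds to the single $G_N$-edge $\{x_{\max}(i),x_{\max}(j)\}$), then within $S_j$ from $x_{\max}(j)$ to $w$. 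Assign to this path the weight $\pi_{G_N}(z)\pi_{G_N}(w)$ (pairs inside one chunk, $i=j$, are routed directly through $x_{\max}(i)$ and contribute the $\Pi_{G_N}$ term). Now estimate the edge load \eqref{eq:edge_load} and congestion \eqref{eq:def_congestion} of each type of edge. For an inter-chunk edge $e=\{x_{\max}(i),x_{\max}(j)\}$: the total $f$-weight crossing it is $\sum_{z\in S_i',w\in S_j'}\pi_{G_N}(z)\pi_{G_N}(w)$ summed over the chunk-pairs $(i',j')$ whose $f^*$-flow uses the $\tGamma$-edge $\{i,j\}$; this equals $\sum \pi_{G_N}(S_{i'})\pi_{G_N}(S_{j'})$-type quantities, which by definition of $R_{G_N,\tGamma}$ is at most $R_{G_N,\tGamma}^2\sum\pi_{\tGamma}(i')\pi_{\tGamma}(j')=R_{G_N,\tGamma}^2 f^*(\{i,j\})$ up to the path-length factor; path lengths in $G_N$ are at most $\Delta_{G_N}|p^*|$ longer than in $\tGamma$, contributing the $\Delta_{G_N}$ factor, and dividing by $\pi_{G_N}(x_{\max}(i))P(x_{\max}(i),x_{\max}(j))=\tfrac1{2|E(G_N)|}$ produces the $|E(G_N)|$ factor and, after recalling $\pi_{\tGamma}(i)P_{\tGamma}(i,j)=\tfrac1{2|E(\tGamma)|}$ inside $\rho(f^*)$, the $\tfrac1{|E(\tGamma)|}(\tmix(\tGamma))^2$ via \eqref{eq:sinclair}. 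For an intra-chunk edge $e\in E(G^j_N)$: every path through $e$ either serves a same-chunk pair (total weight $\le \Pi_{G_N}$ by its definition, length $\le\Delta_{G_N}$) or is the ``last leg'' of an inter-chunk route, whose total weight is $\le \pi_{G_N}(S_j)\cdot 1$; dividing by $\pi_{G_N}(a)P(a,b)\ge \tfrac1{2|E(G_N)|}$ again yields the $\Delta_{G_N}|E(G_N)|$ prefactor times $\Pi_{G_N}$ (the $\pi_{G_N}(S_j)\le R_{G_N,\tGamma}\pi_{\tGamma}(j)$ piece is absorbed into the $R^2(\tmix(\tGamma))^2/|E(\tGamma)|$ term, since $(\tmix(\tGamma))^2\ge 1$). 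Taking the max over edges and plugging into \eqref{eq:comb_gap} (with $\log(4|\cE(G_N)|)$ a polylog, hidden in $\lesssim$) gives \eqref{ttgl}.

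\emph{Main obstacle.} The genuinely delicate point is the bookkeeping in the congestion estimate: one must verify that when many chunk-pairs $(i',j')$ share a single $\tGamma$-edge $\{i,j\}$, the sum of their $\pi_{G_N}$-masses is controlled edge-by-edge by $R_{G_N,\tGamma}^2$ times the $f^*$-load of that edge — i.e.\ that the definitions of $R_{G_N,\tGamma}$, $\Pi_{G_N}$ and $\Delta_{G_N}$ as \emph{maxima over chunks} are exactly what is needed, with no hidden factor of $K$. This hinges on the fact that $f^*$ already packages the sum over $(i',j')$ into $f^*(\{i,j\})$ with its own length weighting, so the $G_N$-refinement only multiplies lengths by $\le\Delta_{G_N}$ and masses by $\le R_{G_N,\tGamma}^2$ pointwise. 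Getting every constant and every edge-probability normalization to line up is where the ``refinement of \cite{BBY08}'' referred to in the introduction really bites; conceptually, though, it is a standard path-routing argument.
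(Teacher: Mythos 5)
Your overall architecture (couple a rescaled Pareto to the chunk maxima, then route a multicommodity flow on $G_N$ through the hubs $x_{\max}(i)$) matches the paper's argument, but your coupling inequality is stated in the wrong direction, and this is not a bookkeeping slip. You claim that one can couple $W_{\max}(i)$ with a genuine Pareto$(\tau-1)$ variable $\widehat W_i$ so that $W_{\max}(i)\ge L^{1/(\tau-1)}\widehat W_i$ almost surely. This would require $L^{-1/(\tau-1)}W_{\max}(i)$ to stochastically dominate a Pareto$(\tau-1)$, which is false: for $t\ge 1$ one has $\P\big(L^{-1/(\tau-1)}W_{\max}(i)>t\big)=1-(1-t^{-(\tau-1)}L^{-1})^L\le t^{-(\tau-1)}$ by $(1-a)^L\ge 1-aL$, so the rescaled maximum is dominated \emph{by} (not dominating) a Pareto, and for $t<1$ the Pareto tail equals $1$ while the rescaled maximum can fall below $t$. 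The paper handles this by coupling so that $L^{-1/(\tau-1)}W_{\max}(S_j)\vee 1\le \tW_j\le (\log L)^{2/(\tau-1)}L^{-1/(\tau-1)}W_{\max}(S_j)$ (inequality \eqref{pfa}) and then using the \emph{upper} bound on $\tW_j$, paying a $(\log L)^{4/(\tau-1)}$ factor in the exponent of the connection probability; the $\tfrac32\varepsilon$ versus $\varepsilon$ gap in the definition of $\talpha$ is chosen precisely to swallow this polylogarithmic loss together with the $K$-vs-$N$ and $(jL)^\alpha$ bookkeeping. In your sketch the slack is attributed only to the latter, and the needed two-sided comparison with the true Pareto weights is missing.

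For the flow estimate \eqref{ttgl}, your route is again the same as the paper's (define $f$ from a near-optimal $f^*$ on $\tGamma$ and bound the edge loads separately on hub edges and on intra-chunk edges). For intra-chunk edges you correctly observe, unlike the published proof, that inter-chunk routes $p(q,z,w)$ with $z\in S_j$ also traverse intra-chunk edges of $S_j$ via the leg $p(z,x_{\max}(j))$; the paper's inequality \eqref{eq:bound_two_subt} silently ignores this contribution. However, your resolution — "total weight $\le\pi_{G_N}(S_j)\cdot 1$, absorbed into the $R^2(\tmix(\tGamma))^2/|E(\tGamma)|$ term" — is not a proof: the edge-load $\sum_p f(p)|p|$ involves path lengths $|p(q,z,w)|\le 2\Delta_{G_N}|q|$ with $|q|$ unbounded, and the natural bound $\sum_{q:q^+=j}f^*(q)|q|\le\sum_{e\ni j}f^*(e)\le\widetilde D_j\,\rho(f^*)/(4|E(\tGamma)|)$ introduces an extra factor $\widetilde D_j$ that does not appear in the statement. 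So you have identified a genuine loose end in the published argument, but you have not closed it, and the absorption step as written does not follow from $(\tmix(\tGamma))^2\ge 1$.
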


Notice that $\tGamma$ in the lemma is an SFP where $
\tgamma=\tilde\alpha(\tilde\tau-1)<1\,.
$
Therefore we already know that $\tmix(\tGamma)$ is polynomial in $K$ (and hence in $N$) by Theorem \ref{gammino}. We point out that the presence of $\varepsilon$ in the definition of $L$ is needed for the stochastic domination~\eqref{prlmn}.

\smallskip

We proceed now as follows. We will spend the rest of this section in the proof of Lemma \ref{keylemma}, dividing the proof in two parts: in Subsection \ref{stochasticdomination} we describe how to couple the two random graphs and obtain \eqref{prlmn}, while in Subsection \ref{nonstochasticdomination} we derive \eqref{ttgl}.
Afterwards, in Section \ref{sec:importante2}, we will show how the upper bound in Theorem \ref{thm:gamma_tra_12} follows from Lemma \ref{keylemma}.

\subsection{Stochastic domination, proof of {\eqref{prlmn}}}\label{stochasticdomination}
We show now that we can build $\tGamma$ on the same probability space of $G_N$ (and therefore of $\Gamma$) so that condition \eqref{prlmn} is satisfied. The idea is to couple the weights $\tW_j$ in $\tGamma$ with the weights in $\Gamma$ so that, roughly, $\tW_j\sim L^{-{1}/(\tau-1)} \Wmax(S_j)$, where $\Wmax(S_j):=\max_{x\in S_j}W_x=W_{x_{\max}(j)}$. Since we will use this fact in the next section, we rephrase the statement in a more precise proposition.

 \begin{proposition}\label{prop:stochasticdomination}
	There exists a coupling between $G_N$ and $\tGamma$ such that, $\P$--a.s.~for all $N$ large enough,
		\begin{align}\label{pfa}
	\Big({L^{-\tfrac{1}{\tau-1}}}{\Wmax (S_j)}\,\vee \,1\Big)
	\leq \,\tW_j\,
	\leq (\log L)^{\tfrac 2{\tau-1}}{L^{-\tfrac{1}{\tau-1}}}{\Wmax (S_j)}
	%\mic{	\leq \Big\{2{L^{-\tfrac{1}{\tau-1}}}\sum_{x\in S_j}W_x   \vee c(\tau)\Big\}}\,,
	\end{align}
	and 
	\begin{align}\label{pla}
	\P\big(E(\Gamma)\supseteq E(\tGamma)\big)=1\,.
	\end{align}
		% and  $c(\tau)=4(2^\tau-2)^{-\tfrac{1}{\tau-1}}$.
\end{proposition}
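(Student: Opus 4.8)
The plan is to realise $\tGamma$ on the probability space of $G_N$ (and hence of $\Gamma$) by coupling first the weights and then the edges.

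\emph{Coupling of the weights.} Let $F(t)=1-t^{-(\tau-1)}$, $t\ge1$, be the Pareto$(\tau-1)$ distribution function, so that $\Wmax(S_j)=\max_{x\in S_j}W_x$ has distribution function $F_{\max}(t)=F(t)^L=(1-t^{-(\tau-1)})^L$. Since $F$ is continuous and strictly increasing on $[1,\infty)$, the variables $U_j:=F_{\max}(\Wmax(S_j))$ are i.i.d.\ $\mathrm{Unif}(0,1)$, and we \emph{define} $\tW_j:=F^{-1}(U_j)=(1-U_j)^{-1/(\tau-1)}$, which makes $(\tW_j)_{j\in\T_K}$ i.i.d.\ Pareto$(\tau-1)$, i.e.\ exactly the weight sequence of an SFP on $\T_K$ with $\ttau=\tau$. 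Writing $w:=\Wmax(S_j)^{-(\tau-1)}\in(0,1]$ (so $U_j=(1-w)^L$ and $\Wmax(S_j)=w^{-1/(\tau-1)}$) one obtains the identity
\[
\frac{\tW_j}{\Wmax(S_j)}=\Big(\frac{w}{1-(1-w)^L}\Big)^{1/(\tau-1)}.
\]
From $1-(1-w)^L\le Lw$ (Bernoulli's inequality) the left inequality in \eqref{pfa} follows \emph{deterministically} (together with the trivial $\tW_j\ge1$). For the right inequality, note $1-(1-w)^L\ge 1-e^{-Lw}$, so it suffices that $1-e^{-Lw}\ge Lw\,(\log L)^{-2}$; an elementary one‑variable estimate shows this holds whenever $Lw\le(\log L)^2/2$, i.e.\ whenever $\Wmax(S_j)\ge\big(2L/(\log L)^2\big)^{1/(\tau-1)}$. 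The complementary event has probability $\big(1-(\log L)^2/(2L)\big)^L\le e^{-(\log L)^2/2}$, so the probability that the right inequality in \eqref{pfa} fails for some $j\in\T_K$ is at most $K\,e^{-(\log L)^2/2}\le N^{1-c\log N}$, which is summable in $N$; Borel--Cantelli then yields \eqref{pfa} for every $j$, $\P$--a.s.\ for $N$ large.

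\emph{Coupling of the edges.} For each unordered pair $\{i,j\}\subset\T_K$ with $m:=\|i-j\|_{\T_K}\ge2$, couple the presence of the edge $\{i,j\}$ in $\tGamma$ to the presence of the edge $\{x_{\max}(i),x_{\max}(j)\}$ in $G_N$ by \emph{reusing the same uniform random variable} that governs the latter edge; neighbouring pairs ($m=1$) are present in both $\Gamma$ and $\tGamma$ by fiat. Since $x_{\max}(\cdot)$ is injective (the $S_j$'s are disjoint), distinct pairs correspond to distinct edges of $G_N$, which are conditionally independent given the weights; as $\tW_j$ is a function of $(W_x)_{x\in S_j}$, the resulting $\tGamma$ is a genuine SFP on $\T_K$ with parameters $(\ttau,\talpha)$ \emph{regardless} of the relation between the two connection probabilities. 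Hence \eqref{pla} follows once one checks that for every such pair $\tW_i\tW_j\,m^{-\talpha}\le\Wmax(S_i)\Wmax(S_j)\,\|x_{\max}(i)-x_{\max}(j)\|^{-\alpha}$, which by the bound $\|x_{\max}(i)-x_{\max}(j)\|\le2mL$ (chunks tile $\T_N$ in blocks of length $L$ and $m\le K/2$) and the upper bound of \eqref{pfa} reduces to the deterministic inequality
\[
2^{\alpha}(\log L)^{4/(\tau-1)}\,L^{\,\alpha-2/(\tau-1)}\ \le\ m^{\,\talpha-\alpha}\qquad\text{for all }2\le m\le\lfloor K/2\rfloor.
\]

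\emph{Why the value of $\talpha$ in \eqref{tildami} works, and the main obstacle.} Using $\gamma=\alpha(\tau-1)$ together with the constraint \eqref{miktti} on $\varepsilon$, one checks $\talpha<\alpha$ throughout $1<\gamma<2$ (so also $\tgamma=\talpha(\tau-1)<1$, as needed for Theorem~\ref{gammino} to apply to $\tGamma$), hence $m\mapsto m^{\talpha-\alpha}$ is decreasing and the binding case above is $m\asymp K\asymp N^{2-\gamma-\varepsilon}$, with $L\asymp N^{\gamma-1+\varepsilon}$. Taking logarithms, the inequality holds for $N$ large as soon as $(2-\gamma-\varepsilon)(\alpha-\talpha)<(\gamma-1+\varepsilon)\,\frac{2-\gamma}{\tau-1}$, and the choice $\talpha=\frac{2-\gamma-\frac32\varepsilon}{(\tau-1)(2-\gamma-\varepsilon)}$ makes the right‑hand side exceed the left‑hand side by exactly $\frac{\varepsilon}{2(\tau-1)}>0$; this strictly positive gap is a genuine power of $N$, which absorbs both the $(\log L)^{4/(\tau-1)}$ inherited from \eqref{pfa} and the constant $2^\alpha$. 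This exponent bookkeeping is the crux: $\talpha$ must be small enough that Euclidean distances in $\T_N$ are deflated by the right power of $L$ when passing to $\T_K$, yet the $\tfrac32\varepsilon$‑versus‑$\varepsilon$ slack in its definition is precisely what produces the polynomial margin needed to kill the logarithmic corrections, while one must simultaneously keep $\talpha<\alpha$ over the whole phase. A secondary point, already handled above, is that the upper inequality in \eqref{pfa} holds only eventually almost surely rather than surely, which is why the edge coupling is arranged so that $\tGamma$ carries the correct SFP law irrespective of whether the domination succeeds on a given realisation (for $\ell\neq0$ the only change is replacing $L$ by $2L$ in finitely many places, cf.\ Remark~\ref{rem:chunk_l}).
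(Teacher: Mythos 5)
Your proof is correct and follows essentially the same strategy as the paper's: couple the $\tW_j$ to be of order $L^{-1/(\tau-1)}\Wmax(S_j)$ up to polylogarithmic corrections, then reduce the edge domination to a deterministic exponent inequality in which the $\tfrac32\varepsilon$-versus-$\varepsilon$ slack in $\talpha$ produces the polynomial margin $N^{\varepsilon/(2(\tau-1))}$ that absorbs logs and constants. The only genuine difference is presentational: where the paper establishes two one-sided stochastic dominances \eqref{pino}--\eqref{kro} and then asserts the existence of a coupling, you build the quantile coupling $\tW_j=F^{-1}(F_{\max}(\Wmax(S_j)))$ explicitly and exploit the exact identity $\tW_j/\Wmax(S_j)=\bigl(w/(1-(1-w)^L)\bigr)^{1/(\tau-1)}$ with $w=\Wmax(S_j)^{-(\tau-1)}$, which makes the left inequality of \eqref{pfa} deterministic (by Bernoulli) and reduces the right one to the elementary estimate $1-e^{-Lw}\ge Lw(\log L)^{-2}$ when $Lw\le(\log L)^2/2$; you are also more careful than the paper in spelling out that reusing the uniforms that govern the $G_N$-edges yields a $\tGamma$ with the correct SFP law unconditionally, so that \eqref{pla} holds on the a.s.\ eventual event where \eqref{pfa} does. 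These are clarifications rather than a different route; the exponent bookkeeping, the bound $\|x_{\max}(i)-x_{\max}(j)\|\le 2mL$, the observation $\talpha<\alpha$ making $m=\lfloor K/2\rfloor$ the binding case, and the Borel--Cantelli step all mirror the paper.
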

\begin{proof}
	First of all we check that it is possible to couple the weights of the nodes so that \eqref{pfa} holds. To this end, it is enough to show that
	\begin{align}
	\P(\tW_j>t)
		&\geq \P\big({L^{-{1}/({\tau-1})}}{\Wmax (S_j)}>t\big) &\forall t\geq 1\label{pino}\\
	 \P(\tW_j>t)
		&\leq \P\big(L^{-{1}/({\tau-1})}U_L\Wmax (S_j)>t\big) &\forall t\geq 1\label{kro}
	\end{align}
	where we shortened $U_L:=(\log L)^{ 2/{\tau-1}}$.
	For \eqref{pino} we notice that, for all $t\geq 1$, the right hand side is 
	\begin{align*}
	%\P\big({L^{-\tfrac{1}{\tau-1}}}{\Wmax (S_j)}>t\big)
		1-\P\big(W_x\leq tL^{{1}/({\tau-1})}\big)^L
		=1-\big(1-t^{-(\tau-1)}L^{-1}\big)^L
		\leq t^{-(\tau-1)}
		=\P(\tW_j>t)
	\end{align*}
	where for the inequality we used the fact that $(1-a)^L\geq 1-aL$ for all $a\geq 0$ and $L\geq 0$. 
	For  \eqref{kro} we calculate
	\begin{align*}
	\P\big(L^{-{1}/({\tau-1})}U_L\Wmax (S_j)>t\big)
		%&=1-\P\big(W_x\leq L^{\tfrac{1}{\tau-1}}t/2\big)^L\\
		&=1-\big(1-(t/U_L)^{-(\tau-1)}L^{-1}\big)^L
			\geq 1-\exp\{-(\log L)^2t^{-(\tau-1)}\} \,.
		%\geq 1-\e^{-(t/2)^{-(\tau-1)}}
		%&\geq (t/2)^{-(\tau-1)}-\tfrac{1}2(t/2)^{-2(\tau-1)}\geq t^{-(\tau-1)}=\P(\tW_j>t)
	\end{align*}
	This quantity is larger than $\P(\tW_j>t)=t^{-(\tau-1)}$ for, say, all $t\geq 2$ as can be checked straightforwardly. But we claim that $L^{-{1}/({\tau-1})}U_L\Wmax (S_j)$ is always larger than $2$, for $L$ large enough. In fact,
	\begin{align*}
	\P\big(\exists j\in\T_K:\,L^{-{1}/({\tau-1})}U_L\Wmax (S_j)\leq 2\big)
		&\leq K \big(1-(2/U_L)^{-(\tau-1)}L^{-1}\big)^L 
		\leq  K\e^{-(\log L)^2/2^{\tau-1}}
	\end{align*}
	which is summable in $L$ (recall that $K$ can be written as a polynomial in $L$), so the first Borel-Cantelli lemma gives the claim and \eqref{kro} is verified for $t<2$, too.
	
	\smallskip
	
	Now we consider $G_N$ and $\tGamma$ built on the same probability space with weights satisfying \eqref{pfa}. We recall that, given the weights, the presence of each edge is independent from the others. Hence, showing that  for all $i\neq j\in\T_K$
	\begin{align}\label{elio}
	\P\big(i\stackrel{\Gamma }{\leftrightarrow} j \,|\,\{W_x\}_{x\in \T_N}\big)
		\geq \P\big(i\stackrel{\tGamma }{\leftrightarrow} j \,\big|\,\{\tW_\ell\}_{\ell\in\T_K}\big)
	\end{align}
	will ensure that there exists a coupling such that \eqref{pla} holds.
 	Without loss of generality we can take $i=1$ and show \eqref{elio} for all $j=3,\dots,\lceil K/2\rceil$ (the case $j=2$ is trivial, since by the definition of the model all nearest neighbours are connected with probability $1$, and we stop at $\lceil K/2\rceil$ since we are dealing with the torus distance). Recalling \eqref{mipe}, the left-hand side~of \eqref{elio} can be bounded as
 	\begin{align*}
 	\P(1\stackrel{\Gamma }{\leftrightarrow} j \,|\,\{W_x\}_{x\in \T_N})
 		%&=1-\exp\Big\{- \sum_{x\in S_1,\,y\in S_j} {W_xW_y}{\|x-y\|^{-\alpha}}\Big\}\\
 		&\geq 1-\exp\Big\{-  \Wmax(1)\Wmax(j)\,(jL)^{-\alpha} \Big\}\\
 		&\stackrel{\eqref{pfa}}{\geq} 1-\exp\Big\{-  {\tW_1\tW_j}\,{j^{-\alpha} L^{-\alpha+\tfrac{2}{\tau-1}}}(\log L)^{-\tfrac{4}{\tau-1}}\Big\}
 	\end{align*}
	where for the first inequality we have used the fact that for all $x\in S_1$ and $y\in S_j$ one has $\|x-y\|\leq jL$. On the other hand, the r.h.s.~of \eqref{elio} is
	\begin{align*}
	\P\big(1\stackrel{\tGamma }{\leftrightarrow} j \,\big|\,\{\tW_\ell\}_{\ell\in\T_K}\big)
		= 1-\exp\Big\{- \tW_1\tW_j \,(j-1)^{-\talpha}\Big\},
	\end{align*}
	so \eqref{elio} is verified if we prove that
	\begin{align}\label{jnfr}
	{j^{-\alpha} L^{-\alpha+\tfrac{2}{\tau-1}}}(\log L)^{-\tfrac{4}{\tau-1}}
		\geq (j-1)^{-\talpha}\,.
	\end{align}
	Since $\alpha>\talpha$, it is enough to show \eqref{jnfr} for $j=\lceil K/2 \rceil $. Recalling that $L=\lfloor N^{\gamma-1+\varepsilon}\rfloor$ and $K=N/L$, we see that 
	\begin{align}\label{grdn}
	\mbox{l.h.s.~of \eqref{jnfr}}
	\gtrsim
	N^{-\alpha}L^{\tfrac 2{\tau-1}}
%	N^{-(2-\gamma-\varepsilon)\alpha}
%	N^{(\gamma-1+\varepsilon)(-\alpha+\tfrac 2{\tau-1})}
	% N^{\tfrac{2-\gamma}{\tau-1}(\gamma-1+\varepsilon)}
	%(\log N)^{\mic{-}\tfrac{4}{\tau-1}}
		= N^{-\tfrac{2-\gamma-2\varepsilon}{\tau-1}}
		%(\log N)^{\mic{-}\tfrac{4}{\tau-1}}
	\end{align}
	while, recalling \eqref{tildami},
	\begin{align}\label{brn}
	\mbox{r.h.s.~of \eqref{jnfr}}
		\lesssim K^{-\talpha}
	%c N^{-(2-\gamma-\varepsilon)\talpha}
		\lesssim  N^{-\tfrac{2-\gamma- 3 \varepsilon/2}{\tau-1}}\,.
	\end{align}
	%for some $c>0$ not depending on $N$. 
	Comparing \eqref{grdn} and \eqref{brn}, we obtain that \eqref{jnfr} holds for all $N$ large enough,  which in turn gives \eqref{elio} and so \eqref{pla}.
\end{proof}

\subsection{Comparison of the mixing times, proof of~{\eqref{ttgl}}}\label{nonstochasticdomination}
We show now how to obtain \eqref{ttgl}, closing the proof of Lemma \ref{keylemma}.
The idea comes from \cite{BBY08} (cfr.~Proposition~2.1 thereby), and we will borrow part of its notation; we will also drop the $N$ from $G_N$ and $G_N^i$ for simplicity. Recall Section \ref{paths} for some notation about paths.
For $x,\,y$ both in some  $S_i$ denote as $p(x,\,y)$ the graph--geodesic in $G^i$ between $x$ and $y$ (if there is more than one, we just choose any). 
For an edge $(i,j)\in E(\Gamma)$, let $e(i,j)\in E(G)$ be the edge $(x_{\max}(i),x_{\max}(j))$.
If $q=e_1 e_2\cdots e_{|q|}\in\Path(i,\,j,\,\tGamma)$ and $x\in S_i$, $y\in S_j$, we denote $p(q,\,x,\,y)$ the path in $G$ from $x$ to $y$ that uses $p(x,x_{\max}(i))$, then the edges induced by $q$ and then $p(x_{\max}(j),y)$, that is
\[
p(q,\,x,\,y):=p(x,\,e_1^+)e(e_1^+,\,e_1^-)e(e_1^-,\,e_2^+)\cdots e(e_{|q|}^+,\,e_{|q|}^-)p(e_{|q|}^-,\,y)
\]
where for each oriented edge $e_k$ we indicate with $e_k^+$ (respectively $e_k^-$) its starting (ending) point. We also observe that for any $q,\,x,\,y $ as above one has
\begin{equation}
\label{eq:diam_path}
|p(q,\,x,\,y)|\le 2\Delta_G+|q|\leq 2\Delta_G |q|\,.
\end{equation}
Let
$f^*$ denote %the optimal flow of $\tGamma$ i.e.~
a flow on $\tGamma$ for which \eqref{eq:sinclair} holds.
From it, we will construct a flow $f$ on $G$ as follows: 
\begin{itemize}[leftmargin=*]
	\item[-] for $x,\,y\in S_i$, set $f(p):=\pi_G(x)\pi_G(y)$ if $p=p(x,\,y)$, and $0$ otherwise;
	\item[-] for $x\in S_i$, $y\in S_j$ with $i\neq j$, set for any $q\in\Path(i,\,j,\,\tGamma)$
	\[
	f(p(q,\,x,\,y))=\frac{f^\ast(q)}{\pi_{\tGamma}(i)\pi_{\tGamma}(j)}\pi_G(x)\pi_G(y) \]                                                                      
	and $0$ otherwise.
\end{itemize}
It is straightforward to verify that this defines a flow on $G$. Let us now compute the congestion rate associated to $f$. Let $(x,y)\in E(G)$ and let $x\in S_i$ and $y\in S_j$.
\begin{itemize}[leftmargin=0.8cm]
	\item If $i\neq j$,  then denoting $q^+$ (respectively $q^-$) the starting (ending) vertex of a path $q$ we obtain
	\begin{align}
	\sum_{p\in\Path(G)\atop p\ni (x,\,y)}f(p)|p|
	&\stackrel{\eqref{eq:diam_path}}{\le} 2\Delta_G\sum_{q\in \Path(\Gamma)\atop q\ni (i,\,j)}\sum_{z\in S_{q^+}\atop w\in S_{q^-}}\frac{f^\ast(q)|q|}{\pi_{\tGamma}(q^+)\pi_{\tGamma}(q^-)}\pi_G(z)\pi_G(w)\nonumber\\
	&\le 2\Delta_G\, R_{G,\,\tGamma}^2 \sum_{q\in \Path(\Gamma)\atop q\ni (i,\,j)}f^\ast(q)|q|\nonumber\\
	&\stackrel{\eqref{eq:sinclair}}{\le}%\frac{\mic{c_2}}{|E(\tGamma)|} \Big(\max_{\ell=1,\,\ldots,\,K}\frac{\pi_G(S_\ell)}{\pi_{\tGamma}(\ell) \Big)^2\tmix(\tGamma)\Delta_G
		%\le
		c\Delta_G\, R_{G,\,\tGamma}^2 \frac{1}{|E(\tGamma)|} \big(\tmix(\tGamma)\big)^2\label{eq:bound_one_subt}
	\end{align}
	for some constant $c>0$.
	\item If $i=j$, any path $p$ that contains the edge $(x, y)$ such that $f (p) > 0$ must be of the form $p = p(z, \,w)$ for some $z,\, w \in S_i$. Therefore 
	\begin{align}\label{eq:bound_two_subt}
	\sum_{p\in\Path(G)\atop p\ni (x,\,y)}f(p)|p|
	&\le \sum_{z{\neq} w\in S_i}f(p(z,\,w))|p(z,\,w)|%+\sum_{j:\,(i,\,j)\in E(\tGamma)}\sum_{q\in \Path(\tGamma)%\mic{\tGamma?}
	%	\atop q\ni (i,\,j)}\sum_{z\in S_{q^+}\atop w\in S_{q^-}}f(p(q,\,z,\,w))|p(q,\,z,\,w)|
	\leq \Delta_G\sum_{z\neq w\in S_i}\pi_G(z)\pi_G(w) 
	\le \Delta_G \,\Pi_G\,.
	\end{align}
\end{itemize}
Now note that for a flow $f$ one has that
\begin{equation}\label{eq:bound_rho_edge}
\rho(f)
	\stackrel{%\eqref{transitionmatrix},\,
		\eqref{eq:def_congestion}}{\le}  %D_G\max_{(a,\,b)\in E}\sum_{p\in\Path(G)\atop p\ni (a,\,b)}f(p)|p|
	%\le
	4 |E(G)| \max_{(a,\,b)\in E}\sum_{p\in\Path(G)\atop p\ni (a,\,b)}f(p)|p|\,.
\end{equation}
The result~\eqref{ttgl} follows by applying~\eqref{eq:comb_gap} and~\eqref{eq:bound_rho_edge} to $\tmix(G_N)$ and then using~\eqref{eq:bound_one_subt} and \eqref{eq:bound_two_subt}.

\section{Case {$1<\tau<2,\,1<\gamma<2$}: Upper bound of Theorem~{\ref{thm:gamma_tra_12}}. Second part.}\label{sec:importante2}

In the previous section we showed inequality \eqref{ttgl}. In order to conclude the  proof of the upper bound of Theorem~\ref{thm:gamma_tra_12}, we will have therefore to bound the quantities appearing on the right hand side of \eqref{ttgl}.
As mentioned before
\begin{align}\label{sbrot}
\tmix(\tGamma)\leq (\log N)^{c}
\end{align}
since, for $\varepsilon$ small enough, $\tGamma$ is a SFP random graph with $\tgamma=\talpha(\ttau-1)<1$: by using the result of Theorem \ref{gammino} its mixing time is at most logarithmic in the number of nodes, which is $K$.
We are left to control the quantities $\Delta_{G_N}$, $\Pi_{G_N}$, $R_{G_N,\tGamma}$, ${|E(G_N)|}$ and ${|E(\tGamma)|}$. This is taken care of in the next four propositions. Also in this section we will drop the $N$ from $G_N$ to ease the notation.

\begin{proposition}\label{diametro}
	Let $\gamma<2$. Recall that $\Delta_{G_N}:= \max_{j\in\T_K}{\rm diam}(G^j_N)$.
	There exists $c>0$ such that, $\P$-a.s.~for all $N$ large enough,
	\begin{align}\label{tos}
	\Delta_G\leq (\log N)^c\,.
	\end{align}
\end{proposition}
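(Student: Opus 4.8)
The plan is to transplant the multi-scale construction of \cite{DVH13} for graph distances on $\Z^d$ into a single chunk, and then union-bound over the chunks. First a routine reduction: since $L=\lfloor N^{\gamma-1+\varepsilon}\rfloor$ is a fixed power of $N$, $\log L$ and $\log N$ have the same order, so it suffices to find $c>0$ such that, for each fixed $j\in\T_K$, $\sum_N K\,\P\big({\rm diam}(G_N^j)>(\log L)^c\big)<\infty$; as $K$ is polynomial in $N$ and the per-chunk failure probability will decay faster than any power of $N$, the almost sure statement follows from the first Borel-Cantelli lemma. Fix $j$ from now on, identify $S_j$ with $\{0,\dots,L-1\}$, and work inside the induced graph $G_N^j$, which carries the i.i.d.\ Pareto$(\tau-1)$ weights together with the nearest-neighbour edges (so in particular $G_N^j$ is connected).

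The core is a hierarchical backbone of vertices of rapidly growing weight. Put $w_0:=(\log L)^a$ for a large constant $a$; given $w_i$, let the next threshold $w_{i+1}$ be $w_i^{\kappa}$, up to polylogarithmic corrections, for a fixed exponent $\kappa=\kappa(\gamma)>1$ — the existence of such a $\kappa>1$ is exactly where $\gamma<2$ enters. Since $\kappa>1$ the $w_i$ grow doubly exponentially, so $M=\bO(\log\log L)$ levels suffice to reach the largest weight $w^\star=w_M$ that is still attainable without the auxiliary intervals below leaving $S_j$. The key one-step estimate is: if $W_u\ge w_i$, then with probability at least $1-\exp\{-w_i^{c_0}\}$ for some $c_0>0$, the sub-interval of $S_j$ of radius $\rho_i$ around $u$, with $\rho_i$ a suitable power of $w_i$, contains a vertex $u'$ with $W_{u'}\ge w_{i+1}$ directly linked to $u$ in $G_N^j$. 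The two inputs are: the maximum of $m$ i.i.d.\ Pareto$(\tau-1)$ weights is of order $m^{1/(\tau-1)}$ with overwhelming probability (a Chernoff-type bound, cf.\ Proposition~\ref{prop:VjConcentra}), which provides the heavy vertex $u'$ inside the interval; and the linking probability $1-\exp\{-W_uW_{u'}\rho_i^{-\alpha}\}$ is overwhelming, which — since $\gamma>1$ makes the exponent in $\rho_i$ negative — dictates the choice of $\rho_i$ and is again possible only when $\gamma<2$. Iterating, and prepending a short initial segment of nearest-neighbour moves through intervals of polylogarithmic radius that raises any starting vertex of weight $<w_0$ to weight $\ge w_0$, every vertex of $S_j$ is joined inside $G_N^j$ to some vertex of weight $\ge w^\star$ by a path of length $\bO(\log\log L)$.

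It remains to show that the ``heavy'' vertices of $S_j$ — those of weight $\ge w^\star$, of which there are $\gtrsim L^{2-\gamma}$, hence polynomially many, by the same concentration argument — together with $x_{\max}(j)$ span a subgraph of $G_N^j$ of diameter $\bO(1)$ up to polylogarithmic factors. This is one more renormalisation in the spirit of \cite{DVH13}: heavy vertices lying within a suitable polynomially-large distance are forced to be adjacent (the product of their weights dominates the $\alpha$-th power of their distance), while the longer-range heavy--heavy edges are frequent enough that, after a bounded number of coarsening steps, every pair of heavy vertices is at bounded graph distance; equivalently, a $1/\mathrm{polylog}$ fraction of heavy vertices are directly adjacent to $x_{\max}(j)$ and these relays stay dense enough to be reached directly from every heavy vertex. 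Combining the two parts, every pair of vertices of $S_j$ is at graph distance $\bO(\log\log L)\le(\log N)^c$ in $G_N^j$, as claimed. I expect the main difficulty to lie precisely at the top of the hierarchy and in this relay step: one must reach weights large enough to percolate across the entire chunk while keeping every auxiliary interval inside $S_j$, and the two requirements are compatible only thanks to $\gamma<2$ — so \cite{DVH13}'s infinite-lattice argument genuinely has to be re-engineered for a bounded domain, and carefully tracking the logarithmic corrections (which is what upgrades the $\log\log$ of the infinite-lattice statement to a power of $\log N$ here) is the most delicate part.
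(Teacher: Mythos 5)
Your route is a genuine alternative to the paper's and is closer in spirit to the renormalization argument of \citet{benjaberger}, which the paper explicitly chooses \emph{not} to follow. The paper instead adapts the dyadic-interval construction of \citet[Theorem~5.1]{deijfen}: it defines halving intervals $A_i=[2^{-i-1}L,2^{-i}L)$ and their mirror images $B_i$ zooming in from the two endpoints of the chunk towards its middle, lets $a_i$ (resp.\ $b_i$) be the maximum-weight vertex of $A_i$ (resp.\ $B_i$), shows $W_{a_i}\gtrsim|A_i|^{(1-\delta)/(\tau-1)}$ for every $i$ simultaneously with probability $1-\exp\{-c(\log L)^{M\delta}\}$, and then checks $a_i\leftrightarrow a_{i+1}$ with overwhelming conditional probability because $W_{a_i}W_{a_{i+1}}/|a_i-a_{i+1}|^\alpha\gtrsim(2^{-i}L)^{(2(1-\delta)-\gamma)/(\tau-1)}$, whose exponent is positive exactly when $\delta<(2-\gamma)/2$. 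The decisive structural advantage of this choice is that the two backbones meet automatically at the common hub $a_1=b_1$, the maximum-weight vertex of the central half-interval $[L/4,3L/4)$, so no separate relay step among heavy vertices is needed; the bound $3(\log L)^M$ then comes from $O(\log L)$ backbone hops plus $O((\log L)^M)$ nearest-neighbour steps at the finest scale. Your proposal grows weight thresholds geometrically ($w_{i+1}\approx w_i^{\kappa}$), reaching $w^\star$ in $O(\log\log L)$ levels — fine, and $\gamma<2$ does enter via $\kappa(\gamma-1)<1$ — but you must then separately show that the $\gtrsim L^{2-\gamma}$ vertices of weight $\ge w^\star$, scattered over the whole chunk of length $L$, span a subgraph of polylogarithmic diameter. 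That is not ``one more renormalisation'' dispatched in a sentence: at the critical scale $d\approx (w^\star)^{2/\alpha}$ the heavy--heavy link probability is bounded away from both $0$ and $1$, not ``forced'', so a genuine chaining or BFS-expansion argument with its own union bound over a bounded number of coarsening levels is required, and you yourself flag this as the delicate part. In short, your approach is plausibly completable, but it trades the paper's one-shot hub construction for a two-stage scheme whose second stage is, as written, a real gap rather than a routine detail.
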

\begin{observation}
	Mind that, for completeness, Proposition~\ref{diametro} is stated for a set of parameters that is more general than the one considered in this Section, that is, we are not imposing that  $1<\tau<2$. %In fact we will use this proposition also in Section~\ref{sec:strn} when $\tau>2$.
\end{observation}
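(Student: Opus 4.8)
The statement to be proved here is merely the observation immediately following Proposition~\ref{diametro}: it records that although in the present section we work under the standing hypotheses $1<\tau<2$ and $1<\gamma<2$, the statement of Proposition~\ref{diametro} itself is deliberately phrased only under the weaker assumption $\gamma<2$, with no constraint on $\tau$ beyond $\tau>1$. There is nothing to prove in the mathematical sense: the observation is a pointer to the reader, clarifying the scope of the preceding proposition so that it can be invoked later (in particular in Section~\ref{sec:importante2} and possibly elsewhere) without the reader worrying whether the restriction $\tau<2$ is secretly in force.

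Accordingly, the only ``proof'' I would supply is a one-line justification that the restriction is genuinely unnecessary for the proposition as stated. Concretely, I would point out that the bound \eqref{tos} on $\Delta_G=\max_{j\in\T_K}\mathrm{diam}(G^j_N)$ is established (in the proof of Proposition~\ref{diametro}, which is postponed) by an argument adapted from \cite{DVH13} that only uses the connection probabilities \eqref{eq:con_proba} together with the Pareto tail \eqref{weightdistribution}; the key input is the presence, inside each chunk $S_j$ of length $L=\lfloor N^{\gamma-1+\varepsilon}\rfloor$, of enough high-weight vertices to form short ``highways'', and this mechanism depends only on $\gamma=\alpha(\tau-1)<2$, not separately on $\tau$. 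Hence stating Proposition~\ref{diametro} under the single hypothesis $\gamma<2$ is legitimate and in fact convenient, since the same bound on $\Delta_{G_N}$ will later be needed in regimes with $\tau\geq 2$ (e.g.\ when discussing the ``triangle'' region mentioned in the introduction, or case~\ref{case3}).

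\textbf{Main obstacle.} There is no mathematical obstacle; the only care needed is editorial consistency — making sure that nothing in the actual (postponed) proof of Proposition~\ref{diametro} silently uses $\tau<2$, so that the generality claimed in this observation is honest. In particular one should double-check that the Fuk--Nagaev-type estimates and the counting of high-weight vertices per chunk that feed into that proof are stated and applied for all $\tau>1$ with $\gamma<2$, and not merely for $\tau<2$. Once that is verified, the observation stands as written and requires no separate proof.
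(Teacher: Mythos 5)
Your reading is correct: the observation is purely editorial and carries no proof in the paper, and the postponed proof of Proposition~\ref{diametro} indeed uses only $\gamma<2$ (via the choice $0<\delta<(2-\gamma)/2$, which makes the exponent $\tfrac{2(1-\delta)-\gamma}{\tau-1}$ positive) with no separate restriction on $\tau$. Your proposal matches the paper's intent, so nothing further is needed.
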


\begin{proposition}\label{ratiodegrees}
	%Let $G_N$ and $\tGamma$ be the graphs described at the beginning of this section. 
	There exists $c>0$ such that, $\P$-a.s.~for all $N$ large enough, 
	\begin{align}\label{pscl}
	N^{\gamma-1}
		\leq  \frac{|E(G)|}{|E(\tGamma)|}
		\leq  N^{\gamma-1+c\varepsilon}.
	\end{align}
\end{proposition}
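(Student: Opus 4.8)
The plan is to estimate $|E(G)|$ and $|E(\tGamma)|$ separately and then take the ratio. Since $|E(G)| = \tfrac12 D_{G_N}$, I would first show that $D_{G_N}$ concentrates around its expectation and that $\E[D_{G_N}] \asymp N^{2-\gamma}$ up to slowly varying corrections. For the expectation, condition on the weights and use Lemma \ref{bosh}: for $\tau<2$ the linking probability of $x,y$ is $\lesssim \|x-y\|^{-\gamma}(\log\|x-y\|)^2$, and summing over $y$ at fixed $x$ gives $\E[D_x] \lesssim N^{1-\gamma}(\log N)^2$ since $\gamma<1$ would make the sum dominated by large distances — wait, here $\gamma>1$, so the sum $\sum_{d=1}^{N} d^{-\gamma}$ converges and one must be more careful; the right order actually comes from \eqref{expecteddegreeoriginal}, namely $\E[D_x\,|\,W_x] \lesssim N^{1-\gamma}W_x^{\tau-1}\wedge N$. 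Taking the expectation over $W_x$ (which has infinite mean since $\tau<2$, so the truncation at $N$ matters) yields $\E[D_x] \asymp N^{1-\gamma}\cdot N^{(2-\tau)\alpha}$ up to logs $= N^{1-\gamma+2\alpha-\gamma} $... I would carefully recompute: $\E[W_x^{\tau-1}\wedge N^{\gamma/\alpha}]$ — the truncation level for $W_x^{\tau-1}$ is where $N^{1-\gamma}W_x^{\tau-1}=N$, i.e. $W_x^{\tau-1}=N^\gamma$, and $\E[W^{\tau-1}\wedge N^\gamma]$ with $W$ Pareto$(\tau-1)$ is $\asymp N^{\gamma(2-\tau)}\cdot$ something; the upshot is $\E[D_{G_N}]\asymp N^{2-\gamma}$ up to slowly varying factors, consistent with \eqref{ashp}. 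Concentration of $D_{G_N}$ around this value will follow from a second-moment computation or a Bernstein/Fuk--Nagaev-type bound as announced in the introduction; the heavy tails force one to truncate the weights at a suitable polynomial level and handle the (rare) exceptional event by Borel--Cantelli.

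For $|E(\tGamma)|$, the same scheme applies to $\tGamma$, which is an SFP on $\T_K$ with parameters $\ttau=\tau$ and $\talpha$ as in \eqref{tildami}, so that $\tgamma=\talpha(\tau-1)<1$. Since $\tgamma<1$ the mean degree in $\tGamma$ diverges, and the analogue of \eqref{ashp} gives $\E[D_{\tGamma}]\asymp K^{2-\tgamma}$, again with a concentration statement holding $\P$-a.s.\ for $K$ (hence $N$) large. Thus $|E(\tGamma)| \asymp K^{2-\tgamma}$ up to slowly varying corrections. Recalling $K = N/L$ with $L = \lfloor N^{\gamma-1+\varepsilon}\rfloor$, we get $K \asymp N^{2-\gamma-\varepsilon}$, so $|E(\tGamma)| \asymp N^{(2-\gamma-\varepsilon)(2-\tgamma)}$.

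Putting the two together, $|E(G)|/|E(\tGamma)| \asymp N^{2-\gamma}/N^{(2-\gamma-\varepsilon)(2-\tgamma)}$ up to slowly varying factors. It then remains to check that the exponent $2-\gamma - (2-\gamma-\varepsilon)(2-\tgamma)$ lies in the interval $[\gamma-1,\gamma-1+c\varepsilon]$ for a suitable constant $c$. Plugging in $\tgamma = \talpha(\tau-1) = \frac{2-\gamma-\frac32\varepsilon}{2-\gamma-\varepsilon}$, one computes $(2-\gamma-\varepsilon)(2-\tgamma) = (2-\gamma-\varepsilon)\cdot 2 - (2-\gamma-\frac32\varepsilon) = 2(2-\gamma) - 2\varepsilon - (2-\gamma) + \frac32\varepsilon = (2-\gamma) - \frac12\varepsilon$, hence the exponent of the ratio is $2-\gamma - \big((2-\gamma)-\frac12\varepsilon\big) = \tfrac12\varepsilon$?? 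That cannot be right — I expect $\gamma-1$, not a quantity tending to $0$. The resolution is that the correct normalization compares edge \emph{counts}, not mean degrees, and the factor $L$ by which vertices were collapsed enters: each of the $K$ vertices of $\tGamma$ absorbs $L$ vertices of $G$, and the diagonal (within-chunk) edges of $G$ are \emph{not} represented in $\tGamma$ at all. So the honest statement is $|E(G)| \asymp N^{2-\gamma}$ while $|E(\tGamma)|$ counts only cross-chunk skeleton edges; I would redo the lower and upper bounds of $|E(\tGamma)|$ directly from the coupling \eqref{prlmn} (which gives $|E(\tGamma)|\le|E(\Gamma)|$) and from a direct second-moment estimate on $|E(\Gamma)|$ using the collapsed linking probabilities, which by Proposition \ref{prop:stochasticdomination} behave like those of an SFP with the rescaled weights $L^{-1/(\tau-1)}\Wmax(S_j)$. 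The bookkeeping gives $|E(\tGamma)|\asymp K^{2-\tgamma}$ times a correction of order $L^{\pm c\varepsilon}$ coming from the $(\log L)^{2/(\tau-1)}$ slack in \eqref{pfa}, and it is precisely this slack that produces the one-sided error $N^{c\varepsilon}$ in the upper bound of \eqref{pscl} while the lower bound stays clean at $N^{\gamma-1}$.

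The main obstacle, and the bulk of the work, is the concentration of the two edge counts in the heavy-tailed regime $\tau<2$: $D_{G_N}$ is a sum of indicators whose conditional means have an infinite-mean heavy tail in the weights, so a naive variance bound diverges. I would handle this by a truncation argument — split on the event that all weights are below $N^{\beta}$ for a well-chosen $\beta$, control this event by Borel--Cantelli using the Pareto tail, and on the complementary (truncated) space apply a Bernstein or Fuk--Nagaev inequality (Theorem \ref{thm:fuknagaev}) to get deviations of smaller polynomial order than $N^{2-\gamma}$ and $K^{2-\tgamma}$ respectively. A secondary technical point is making sure the slowly varying (polylog) corrections from Lemma \ref{bosh} and from \eqref{pfa} only inflate the upper bound by $N^{c\varepsilon}$ and never spoil the lower bound $N^{\gamma-1}$; this is a matter of carefully tracking which estimates are two-sided and which are one-sided through the coupling.
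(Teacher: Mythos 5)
Your plan to estimate $|E(G)|$ and $|E(\tGamma)|$ separately is the right strategy (and matches the paper), and your treatment of $|E(\tGamma)|$ is essentially correct. However, there is a fundamental error in the estimate of $|E(G)|$ that breaks the argument and that you notice but misdiagnose.

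You claim $\E[D_{G_N}]\asymp N^{2-\gamma}$, citing \eqref{expecteddegreeoriginal} and \eqref{ashp}. Both of these are stated and proved only for the regime $\gamma<1$ (and \eqref{ashp} for the simplified model $\sG_N$, not $G_N$). In the present regime $1<\gamma<2$ (with $\tau<2$), the per-vertex degree has a \emph{finite} first moment: by Lemma~\ref{bosh}, $\P(1\leftrightarrow y)\lesssim \|1-y\|^{-\gamma}(\log\|1-y\|)^2$, and since $\gamma>1$ this is summable, so $\E[D_1]\asymp 1$ and hence $\E[D_{G_N}]\asymp N$, not $N^{2-\gamma}$. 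The bound $\E[D_x\,|\,W_x]\lesssim N^{1-\gamma}W_x^{\tau-1}\wedge N$ from \eqref{expecteddegreeoriginal} is simply not an upper bound when $\gamma>1$ (for $W_x\asymp 1$ it would give $N^{1-\gamma}\to 0$, which contradicts $D_x\ge 2$); the valid replacement, which the paper proves, is $\E[D_x\,|\,W_x]\lesssim W_x^{1/\alpha}\wedge N$ (using that the tail sum $\sum_{j\geq W_x^{1/\alpha}} j^{-\gamma}$ now converges), whose expectation is $\bO(1)$ because $W_x^{1/\alpha}$ has Pareto tail of index $\gamma>1$.

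Once you use the correct order $|E(G)|\asymp N$ (the paper shows $N\leq|E(G)|\leq N(\log N)^4$ a.s.\ via two applications of Bernstein), the arithmetic you already carried out for $|E(\tGamma)|\asymp K^{2-\tgamma}=N^{(2-\gamma)-\varepsilon/2}$ immediately gives $|E(G)|/|E(\tGamma)|\asymp N^{\gamma-1+\varepsilon/2}$, exactly as in \eqref{pscl}. So the apparent contradiction you flagged (``the exponent of the ratio is $\tfrac12\varepsilon$?? That cannot be right'') was a correct observation that something was wrong, but the resolution is not the within-chunk edges or an extra factor of $L$ — it is that $|E(G)|$ was misestimated at the outset. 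The remainder of your plan (concentration of both edge counts via truncation plus Bernstein/Fuk--Nagaev, Borel--Cantelli for the exceptional events, using the coupling \eqref{prlmn} and Proposition~\ref{coscc} to control $|E(\tGamma)|$) is sound and aligns with the paper once this error is corrected.
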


\begin{proposition}\label{ratiopis}
	%Let $G_N$ and $\tGamma$ be the ones described at the beginning of this section. 
	Recall that $R_{G,\tGamma}:=
	\max_{j\in\T_K} {\pi_{G}(S_j)}/{\pi_{\tGamma}(j)}$. There exists a constant $c>0$ such that, $\P$-a.s.~for all $N$ large enough,
	\begin{align}\label{gglne}
	R_{G,\tGamma}
	%\max_{j=1,\dots,k} \frac{\pi_{G}(S_j)}{\pi_{\tGamma}(j)}
		\lesssim  N^{\varepsilon}
	\end{align}
\end{proposition}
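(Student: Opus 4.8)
The plan is to estimate, for a fixed chunk $S_j$, the ratio $\pi_G(S_j)/\pi_{\tGamma}(j)$ and then take a union bound over the $K$ chunks. Writing out the invariant measures, $\pi_G(S_j)=D_{S_j}(G)/D_G$ and $\pi_{\tGamma}(j)=\tD_j/D_{\tGamma}$, so the ratio factorizes as
\begin{align*}
\frac{\pi_G(S_j)}{\pi_{\tGamma}(j)}=\frac{D_{S_j}(G)}{\tD_j}\cdot\frac{D_{\tGamma}}{D_G}=\frac{D_{S_j}(G)}{\tD_j}\cdot\frac{|E(\tGamma)|}{|E(G)|}\,.
\end{align*}
The second factor is controlled by Proposition \ref{ratiodegrees}, which gives $|E(\tGamma)|/|E(G)|\lesssim N^{1-\gamma}$ (up to a $N^{c\varepsilon}$). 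So the core of the argument is to show that, uniformly in $j$, $D_{S_j}(G)\lesssim N^{\gamma-1}\tD_j\cdot N^{c'\varepsilon}$, i.e.\ the total degree that the chunk $S_j$ contributes in $G$ is not much larger than $N^{\gamma-1}$ times the single weight-variable $\tD_j$ of the collapsed vertex in $\tGamma$.

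First I would bound $\tD_j$ from below. By the coupling of Proposition \ref{prop:stochasticdomination}, $\tW_j\geq L^{-1/(\tau-1)}\Wmax(S_j)\vee 1$, and since $\tGamma$ is an SFP on $\T_K$ with $\tgamma<1$, the concentration of degrees around their conditional mean (Proposition \ref{coscc}\ref{coscc:iv}, applied to $\tGamma$ in place of $G_N$) gives $\tD_j\gtrsim \E[\tD_j\mid \tW_j]\gtrsim (K^{1-\tgamma}\tW_j^{\tilde\tau-1})\wedge K$. Next I would bound $D_{S_j}(G)$ from above: $D_{S_j}(G)=\sum_{x\in S_j} D_x(G)$, and by \eqref{expecteddegreeoriginal}–\eqref{degreeconcentrationoriginal} each $D_x(G)\lesssim N^{1-\gamma}W_x^{\tau-1}\wedge N$, so $D_{S_j}(G)\lesssim \sum_{x\in S_j}(N^{1-\gamma}W_x^{\tau-1}\wedge N)$. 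The point is that $\sum_{x\in S_j}W_x^{\tau-1}$ is a sum of $L$ i.i.d.\ heavy-tailed variables with tail index $(\tau-1)/(\tau-1)=1$ — i.e.\ barely non-integrable — so by a Fuk–Nagaev estimate (Theorem \ref{thm:fuknagaev}) it concentrates, up to logarithmic corrections, around $\Wmax(S_j)^{\tau-1}$ plus a bulk term of order $L\log L$ times the truncated mean. Both pieces, after multiplying by $N^{1-\gamma}$ and comparing with $\tD_j\gtrsim K^{1-\tgamma}\tW_j^{\tilde\tau-1}\gtrsim K^{1-\tgamma}L^{-1}\Wmax(S_j)^{\tau-1}$, should reduce to powers of $\log N$ times $N^{c\varepsilon}$: the dominant contribution matches because $N^{1-\gamma}\Wmax(S_j)^{\tau-1}$ versus $N^{1-\gamma}\cdot K^{1-\tgamma}L^{-1}\Wmax(S_j)^{\tau-1}\cdot N^{\gamma-1}=K^{1-\tgamma}L^{-1}\Wmax(S_j)^{\tau-1}$, and one checks $K^{1-\tgamma}/L\asymp N^{o(1)}$ from the definitions of $L$, $K$, $\tilde\alpha$, $\tilde\tau$.

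The steps in order: (1) factor the ratio and invoke Proposition \ref{ratiodegrees} for the edge-count factor; (2) lower-bound $\tD_j$ via the coupling \eqref{pfa} and the degree-concentration for the SFP $\tGamma$ with $\tgamma<1$; (3) upper-bound $D_{S_j}(G)$ by $N^{1-\gamma}\sum_{x\in S_j}W_x^{\tau-1}\wedge N$ and apply Fuk–Nagaev to the truncated sum $\sum_{x\in S_j}W_x^{\tau-1}\1{W_x\le (\text{threshold})}$, separating the contribution of the single maximal weight; (4) combine, track the exponents of $N$ and of $\log N$ carefully to show everything collapses to $N^\varepsilon$; (5) union bound over $j\in\T_K$, using that $K$ is polynomial in $N$ while all the bad events have probability decaying faster than any polynomial (so a Borel–Cantelli argument upgrades ``in probability'' to ``a.s.\ for $N$ large'').

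The main obstacle I expect is step (3): the sum $\sum_{x\in S_j}W_x^{\tau-1}$ lives exactly at the boundary case of a tail index $1$, where neither a law of large numbers nor a stable-limit scaling is clean, so the Fuk–Nagaev truncation must be tuned precisely — the truncation level, the number of ``exceedances'' retained, and the bulk mean all enter at matching orders, and one must verify that the maximal-weight term $\Wmax(S_j)^{\tau-1}$ is genuinely the dominant fluctuation and that its interaction with the denominator $\tW_j^{\tilde\tau-1}$ produces no extra power of $N$ beyond $N^{\varepsilon}$. Keeping the polylogarithmic and $N^{c\varepsilon}$ corrections under control simultaneously across all $K$ chunks is where the bookkeeping is heaviest.
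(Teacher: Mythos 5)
Your overall plan matches the paper's: factor $\pi_G(S_j)/\pi_{\tGamma}(j)$ into an edge-count factor (handled by Proposition \ref{ratiodegrees}) and the ratio $D_{S_j}(G)/\tD_j$, lower bound $\tD_j$ via the coupling \eqref{pfa} and the $\tgamma<1$ degree estimates, upper bound $D_{S_j}(G)$ by a chunk-level Fuk--Nagaev estimate, and close with a union bound over the $K$ chunks. Steps (1), (2), (4), (5) are sound and mirror what the paper does. However, step (3) contains a genuine error that the whole argument rests on.

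You invoke \eqref{expecteddegreeoriginal}--\eqref{degreeconcentrationoriginal} to claim $D_x(G)\lesssim N^{1-\gamma}W_x^{\tau-1}\wedge N$. But \eqref{expecteddegreeoriginal} is an estimate proved in (and for) the regime $\gamma<1$: in its derivation the factor $N^{1-\gamma}$ arises from $\sum_{y}\|1-y\|^{-\gamma}\asymp N^{1-\gamma}$, which only holds because $\gamma<1$. In the regime of this proposition ($1<\gamma<2$, $\tau<2$) that sum is bounded and the correct conditional-degree estimate is $\E[D_x\,|\,W_x]\lesssim W_x^{1/\alpha}\wedge N$, hence after Bernstein $D_x\lesssim W_x^{1/\alpha}(\log N)^2\wedge N$ -- this is exactly \eqref{plpsi}/\eqref{hly} in the paper, derived inside the proof of Proposition \ref{ratiodegrees}. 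Note that for $1<\gamma<2$ one has $W_x^{1/\alpha}> N^{1-\gamma}W_x^{\tau-1}$ on the whole range $1\le W_x\le N^\alpha$, so your bound is strictly too small: it asserts a degree control that is false, and any argument built on it proves nothing about $D_{S_j}(G)$.

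This feeds directly into your second problem: you then propose to apply Fuk--Nagaev (Theorem \ref{thm:fuknagaev}) to $\sum_{x\in S_j}W_x^{\tau-1}$. Since $\P(W_x^{\tau-1}>t)=t^{-1}$, this sum has tail index exactly $1$ and \emph{infinite mean}, which lies outside the scope of the stated theorem (it requires tail index $>1$ and centres around $N\mu$ with $\mu=\E[X_1]<\infty$). You flag this yourself as ``the boundary case,'' but the stated inequality simply does not apply there, and the claimed ``concentration around $\Wmax^{\tau-1}$ plus a bulk of order $L\log L$'' would need a separate truncation argument. The clean fix -- which is what the paper does -- is to use the sharp degree bound $D_x\lesssim W_x^{1/\alpha}\wedge N$, so that $D_{S_j}(G)\lesssim\sum_{x\in S_j}W_x^{1/\alpha}$. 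The random variables $W_x^{1/\alpha}$ have Pareto tail of index $\gamma>1$ with finite mean $\mu=\E[W_1^{1/\alpha}]$, so Fuk--Nagaev applies directly and yields $\sum_{x\in S_j}W_x^{1/\alpha}\lesssim L\vee\max_{x\in S_j}W_x^{1/\alpha}$ with superpolynomially small failure probability (this is \eqref{plnt} in the paper). Combined with $\tD_j\gtrsim\bigl(L^{-1}\max_{x\in S_j}W_x^{\tau-1}\vee 1\bigr)\wedge N^{2-\gamma-\varepsilon}$, a case analysis over the size of $\max_{x\in S_j}W_x$ then gives $D_{S_j}(G)/\tD_j\lesssim N^{\gamma-1+\varepsilon}$, which is what you need.
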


\begin{proposition}\label{ags}
	%Let $G$ and $\tGamma$ be the ones described at the beginning of this section. 
	Recall that $\Pi_{G}:=
	\max_{j\in\T_K}  \sum_{z\neq w\in S_j}\pi_{G}(z)\pi_{G}(w)$. 
%	For all $N$ large enough it holds $\P$-a.s.
%	\begin{align}\label{gglne2}
%	\Pi_{G}
%	%\max_{j=1,\dots,k} \frac{\pi_{G}(S_j)}{\pi_{\tGamma}(j)}
%	\leq c \mic{N^{\gamma-2}?}
%	\end{align}
	It holds
		\begin{align}\label{gglne2}
	\P\big(|E(G)|\Pi_{G}
	%\max_{j=1,\dots,k} \frac{\pi_{G}(S_j)}{\pi_{\tGamma}(j)}
	>  N^{\gamma-1}\big)
	\xrightarrow{N\to\infty}0\,.
	\end{align}
\end{proposition}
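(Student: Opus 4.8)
The plan is to bound the product $|E(G)|\,\Pi_G$ by controlling, for each chunk $S_j$, the quantity $\sum_{z\neq w\in S_j} D_z D_w$, since
\[
|E(G)|\,\Pi_G = \frac{D_G}{2}\cdot\max_{j\in\T_K}\frac{\sum_{z\neq w\in S_j} D_z D_w}{D_G^2} = \frac{1}{2D_G}\max_{j\in\T_K}\sum_{z\neq w\in S_j} D_z D_w\,.
\]
By Proposition \ref{coscc} (or its natural analogue, together with the concentration in item \ref{coscc:iv}) and the concentration of $D_G$ around a quantity of order $N$ (the first moment is finite since $\tau>1$ and, in fact here $\gamma>1$ guarantees $D_G\asymp N$), it suffices to show that with probability tending to $1$,
\[
\max_{j\in\T_K}\sum_{z\neq w\in S_j} D_z D_w \lesssim N^{\gamma}\,.
\]
Since $D_z\lesssim W_z^{\tau-1}\wedge N$ (up to slowly varying factors, recalling $\alpha=\gamma/(\tau-1)$ and $L=N^{\gamma-1+\varepsilon}$, a typical vertex has $\E[D_z|W_z]\lesssim W_z^{\tau-1}$ only when $W_z^{\tau-1}\lesssim N$, i.e.\ $W_z\lesssim N^{1/(\tau-1)}$), the task reduces to a statement purely about the i.i.d.\ weights inside a window of $L$ consecutive vertices: I would show that $\sum_{z\neq w\in S_j}(W_z^{\tau-1}\wedge N^{1/(\tau-1)\cdot(\tau-1)})\ldots$ — more precisely $\bigl(\sum_{z\in S_j} (W_z^{\tau-1}\wedge N)\bigr)^2$ minus the diagonal — is, with high probability uniformly over the $K\approx N^{2-\gamma-\varepsilon}$ chunks, at most $N^{\gamma}$ up to slowly varying corrections.

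The heart of the argument is therefore a large-deviation bound for $T_j:=\sum_{z\in S_j}\bigl(W_z^{\tau-1}\wedge N\bigr)$, a sum of $L$ i.i.d.\ truncated heavy-tailed variables. Writing $V:=W^{\tau-1}$, note $V$ has tail $\P(V>t)=t^{-1}$ for $t\ge 1$ (since $W$ is Pareto$(\tau-1)$), so $V\wedge N$ has mean of order $\log N$ and the sum $T_j$ has mean of order $L\log N$. The truncation level $N$ is chosen so that the largest single summand is at most $N$, which is $\asymp L^{1/(\gamma-1+\varepsilon)}$, much larger than the typical value $L\log N$; so the sum is dominated by its few largest terms. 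I would apply the Fuk--Nagaev inequality (Theorem \ref{thm:fuknagaev}) to $T_j$: with the truncation already in place, the bound gives that $\P(T_j > x)$ decays, for $x$ a large multiple of $\max(L\log N, \text{(a power of the top order statistics)})$, fast enough that a union bound over the $K$ chunks still tends to $0$. Concretely I expect to take $x$ of order $N^{\gamma/2}$ (so that $T_j^2\lesssim N^\gamma$), check that $N^{\gamma/2}\gg L\log N = N^{\gamma-1+\varepsilon}(\log N)$ for $\gamma<2$ and $\varepsilon$ small, and verify the Fuk--Nagaev tail at level $N^{\gamma/2}$ beats $1/K = N^{-(2-\gamma-\varepsilon)}$. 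The off-diagonal correction $\sum_{z\neq w}D_zD_w = T_j^2 - \sum_z D_z^2$ only helps, and the passage from $W_z^{\tau-1}\wedge N$ to the actual degrees $D_z$ is handled by \ref{coscc:iv}, paying only polylogarithmic factors.

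Then I would assemble the pieces: on the high-probability event where every $T_j\lesssim N^{\gamma/2}$ and simultaneously $D_z\lesssim \E[D_z|W_z]^{1/2}+\E[D_z|W_z]\lesssim W_z^{\tau-1}\wedge N$ up to slowly varying factors and $D_G\gtrsim N$, we get $|E(G)|\Pi_G \lesssim N^{-1}\cdot N^{\gamma} = N^{\gamma-1}$ up to slowly varying corrections, which — combined with Lemma \ref{okaramata} applied to $X_N := |E(G)|\Pi_G / N^{\gamma-1}$ — yields exactly the statement $\P(|E(G)|\Pi_G > N^{\gamma-1})\to 0$ after absorbing the slowly varying correction (strictly, one proves $\P(|E(G)|\Pi_G > N^{\gamma-1+\varepsilon'})\to 0$ for every $\varepsilon'>0$ and then invokes Lemma \ref{okaramata}).

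The main obstacle I anticipate is getting the Fuk--Nagaev estimate sharp enough to survive the union bound over all $K\approx N^{2-\gamma-\varepsilon}$ chunks while keeping the target at the critical exponent $N^{\gamma}$ for $T_j^2$: because $\gamma$ can be arbitrarily close to $2$, the margin between $N^{\gamma/2}$ and the mean $\asymp L\log N$ shrinks, and one must be careful about which part of the Fuk--Nagaev bound (the "moderate deviation" Gaussian-type part versus the "one big jump" heavy-tail part) is dominant and about the polylogarithmic factors accumulated from the degree-vs-weight comparison and from the number of large weights. Handling the last, oversized chunk $S_K$ of length $L+\ell<2L$ is a harmless modification by Remark \ref{rem:chunk_l}.
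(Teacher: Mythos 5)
Your plan has a genuine gap at its central step. You propose to show that $T_j:=\sum_{z\in S_j} D_z$ is, up to slowly varying corrections, at most $N^{\gamma/2}$ uniformly over all $K\approx N^{2-\gamma-\varepsilon}$ chunks, so that $T_j^2\lesssim N^\gamma$. This fails. Even with the sharp degree bound $D_z\lesssim W_z^{1/\alpha}\wedge N$ (which is what the paper establishes in \eqref{hly}; the bound $D_z\lesssim W_z^{\tau-1}$ you wrote is also true but much weaker, since $W^{1/\alpha}$ has tail index $\gamma$ while $W^{\tau-1}$ has tail index $1$), the maximum summand $M_j:=\max_{z\in S_j}W_z^{1/\alpha}$ satisfies $\P(M_j>t)\approx L\,t^{-\gamma}$, so at $t=N^{\gamma/2}$ one gets $\P(M_j>N^{\gamma/2})\approx N^{\gamma-1+\varepsilon-\gamma^2/2}$ and thus
\[
K\cdot\P(T_j>N^{\gamma/2})\;\geq\; K\cdot\P(M_j>N^{\gamma/2})\;\approx\; N^{\,1-\gamma^2/2}\;\xrightarrow{N\to\infty}\;\infty
\quad\text{for }1<\gamma<\sqrt 2\,.
\]
In other words, the union bound over chunks does not close for a sizable part of the parameter region, no matter which branch of Fuk--Nagaev you invoke: the sum is indeed dominated by the single largest term, and that term alone can exceed $N^{\gamma/2}$ too often.

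The piece you dismiss as ``only helps'' is actually the load-bearing idea. Writing $Y=\sum_{z\in S_j}D_z$ and $M=\max_z D_z$, one has $\sum_{z\neq w\in S_j}D_zD_w=Y^2-\sum_z D_z^2\leq 2Y(Y-M)$, so the quantity to control is $Y(Y-M)$, not $Y^2$. When $Y$ is large because of a single big jump $M\gg L$, the complementary sum $Y-M$ stays of order $L$ (by Fuk--Nagaev applied to the sum with the top order statistic removed), and the product $M(Y-M)$ can still be $\lesssim N^\gamma$ even though $Y^2$ is far larger. The paper's proof makes this precise through exactly the decomposition you identified but then discarded: it reduces the left side of \eqref{gglne2} to $K\,\P(D_{S_1}(D_{S_1}-D^*_{S_1})>N^\gamma)$, replaces $D_{S_1}\lesssim M\vee L$ and $D_{S_1}-D^*_{S_1}\lesssim Y-M$ via \eqref{plnt} and \eqref{hly}, and then does a three-way case analysis on the size of $M$ (small, intermediate in dyadic slices, or $\geq N^{1-2\varepsilon}$), applying Fuk--Nagaev to $Y-M$ (not to $Y$). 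Unless you incorporate the subtraction of the diagonal as a structural ingredient rather than a harmless correction, the argument cannot be made to survive the union bound for $\gamma$ close to $1$.
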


Before giving the proofs of these four propositions in the next subsections, we conclude the argument for the upper bound of Theorem \ref{thm:gamma_tra_12}.
%The proofs of these four propositions are postponed to the next subsections. 
Using \eqref{sbrot}, \eqref{tos}, \eqref{pscl}, \eqref{gglne} and  \eqref{gglne2} in combination with \eqref{ttgl} yields that there exist positive constants $c_1$ and $c_2$ not depending on $N$ such that
\begin{align*}
\P(\tmix(G_N)\geq (\log N)^{c_1}N^{\gamma-1+c_2\varepsilon})
	\xrightarrow{N\to\infty}0\,.
\end{align*}
Since $\varepsilon$ can be taken arbitrarily small, we finally obtain the upper bound of Theorem \ref{thm:gamma_tra_12} thanks to Lemma \ref{okaramata}.

\subsection{Proof of Proposition \ref{diametro}}
	The proof of the following proposition takes inspiration from the argument of~\citet[Theorem~5.1]{deijfen} which deals with graph distances in SFP. The approach is alternative to the renormalization approach of~\citet[Theorem 3.1]{benjaberger}. First of all we consider $G^1$, the graph induced by $G$ on $\{1,\dots,L\}$.
	Take a constant $M>0$, to be chosen large enough later on. We denote $\mathcal D(\cdot,\,\cdot)$ the graph distance between points of $G^1$.
	We start by showing that, for some $\xi>0$ and $c>0$,
	\begin{align}\label{unoelle}
	{\P\big(\mathcal D(1,L)>3(\log L)^M\big)
		\leq \e^{-c(\log L)^{\xi M} }\,.}
	\end{align}
	%Here we use $d(\cdot,\,\cdot)$ to denote the graph distance.
	Fix some $0<\delta<(2-\gamma)/2$ and define 
	$$
	\imax:=\big\lfloor\log_2 \big(\tfrac{L}{(\log L)^M}\big)\big\rfloor\,.
	$$
	 For $i=2,\dots,\imax$  let
	\begin{align*}
	A_i:=[2^{-i-1}L,\,2^{-i}L)\cap \N
	\qquad\mbox{and}\qquad
	B_i:=[L-2^{-i}L,\,L-2^{-i-1}L)\cap\N
	\end{align*}
	and $A_1=B_1=[\tfrac 14 L,\,\tfrac 34 L)\cap \N$. We want to show that in each $A_i$ (respectively in each $B_i$) there is a point $a_i$ (resp.~$b_i$) such that $a_i\leftrightarrow a_{i+1}$ (resp.~$b_i\leftrightarrow b_{i+1}$) with high probability. Notice that if the event $\bigcap_i \big\{a_i\leftrightarrow a_{i+1},\,b_i\leftrightarrow b_{i+1}\big\}$ happens, then $\mathcal D(1,L)\leq 2(\imax + (\log L)^M)\leq 3(\log L)^M$, since $(\log L)^M$ is an upper bound of the distance between $1$ and the rightmost point of $A_{\imax}$ (resp.~between the leftmost point of $B_{\imax}$ and $L$), being neighbouring points always connected. We focus on the $A_i$'s, since for the $B_i$'s the same  calculation holds.
	
	\smallskip
	
	Let $a_i$ be the point in $A_i$ with the largest weight:
	\begin{align*}
	a_i:=\arg \max_{x\in A_i} W_x%  \{x\in A_i\,:\,W_x\geq W_y\quad\forall y\in A_i\}\,.
	\end{align*}
	and let $F$ be the event
	\begin{align*}
	F:=\{W_{a_i}\geq |A_i|^{\frac{1-\delta}{\tau-1}},\quad\forall \,i=1,\dots,\imax \}
	\end{align*}
	where  $|A_i|$ indicates as usual the cardinality of the set $A_i$.
	We bound
	\begin{align*}
	\P(F^c)
	\leq\sum_{i=1}^{\imax}\P\big(W_{a_i}<|A_i|^{\frac{1-\delta}{\tau-1}}\big)
	\leq\sum_{i=1}^{\imax}\Big(1-|A_i|^{-1+\delta}\Big)^{|A_i|}
	\leq \sum_{i=1}^{\imax}\e^{-|A_i|^{\delta}}\,.
	\end{align*}
	By upper bounding the last expression with $\imax$ times the largest summand (which corresponds to $i=\imax$) and noticing that $|A_{\imax}|\leq  (\log L)^M$ we find
	\begin{align}\label{dritto}
	\P(F^c)\leq \exp\big\{-c(\log L)^{M\delta}\big\}
	\end{align}
	for some $c>0$ not depending on $L$.
	
	On the other hand, conditioning on $F$, it is unlikely that for some $i$ one has $a_i\not\leftrightarrow a_{i+1}$:
	\begin{align}\label{ingrm}
	\P\Big(\bigcup_{i=1,\dots,\imax-1}\{a_i\not\leftrightarrow a_{i+1}\}\,\Big|\,F\Big)
		&\leq \sum_{i=1,\dots,\imax}
		\E\Big[\exp\Big\{-{W_{a_i}W_{a_{i+1}}}
			{|a_i-a_{i+1}|^{-\alpha}}\Big\}\,\Big|\,F\Big]\nonumber\\
		&\leq \sum_{i=1,\dots,\imax}\exp\big\{-|A_i|^{\frac {1-\delta}{\tau-1}}|A_{i+1}|^{\frac {1-\delta}{\tau-1}}2^{\alpha i}L^{-\alpha}\big\}
	\end{align}
	where for the last passage we have used that $|a_i-a_{i+1}|\leq L 2^{-i}$. Since $|A_i|$ is $2^{-i-1}L$ up to a unit and that $\delta<(2-\gamma)/2$, one can check that the exponent in the last display is bounded by
	\begin{align*}
	-|A_i|^{\frac {1-\delta}{\tau-1}}|A_{i+1}|^{\frac {1-\delta}{\tau-1}}2^{\alpha i}L^{-\alpha}
		\leq -c\, |A_i|^{\frac {2(1-\delta)-\gamma}{\tau-1}}
		\leq -c\, |A_{\imax}|^{\frac {2(1-\delta)-\gamma}{\tau-1}}
		\leq -c\, (\log L)^{M\frac {2(1-\delta)-\gamma}{\tau-1}}
	\end{align*}
	where $c>0$ is a constant not depending on $L$, possibly different of the one appearing in \eqref{dritto}. With this bound at hand, we conclude the estimate \eqref{ingrm} obtaining
	\begin{align}\label{rvsc}
	\P\Big(\bigcup_{i=1,\dots,\imax-1}\{a_i\not\leftrightarrow a_{i+1}\}\,\Big|\,F\Big)
		\leq \exp\big\{-c\, (\log L)^{M\frac {2(1-\delta)-\gamma}{\tau-1}}\big\}\,.
	\end{align}

Call now $\xi:=\min\{\delta, \frac {2(1-\delta)-\gamma}{\tau-1}\}$. The bounds \eqref{dritto} and \eqref{rvsc} yield
\begin{align*}
	\P\Big(\bigcup_{i=1,\dots,\imax-1}\{a_i\not\leftrightarrow 		a_{i+1}\}\,\Big|\,F\Big)
	&\leq \P\Big(\bigcup_{i=1,\dots,\imax-1}\{a_i\not\leftrightarrow a_{i+1}\}\,\Big|\,F\Big)
	+ \P(F^c)
	\leq 2\e^{-c(\log L)^{\xi M}}
\end{align*}
and this implies \eqref{unoelle} absorbing the factor $2$ in the constant $c$.

We notice that the bound \eqref{unoelle} also works if we replace $\mathcal D(1,L)$ by any $\mathcal D(x,y)$ with $x,y\in G^1$, since we could  repeat the whole argument here above with $x$ and $y$ replacing $1$ and $L$ and obtain an even better bound.
Hence, by a union bound, 
\begin{align*}
\P\big(\Delta_G>3(\log L)^M\big)
	&=\P\big(\exists j\in\{1,\dots,K\}\,,\, \exists x,y\in G^j\,:\, \mathcal D(x,y)>3(\log L)^M\big)\\
	&\leq KL^2 \P\big(\mathcal D(1,L)>3(\log L)^M\big)\\
	&\leq N^{\gamma+\varepsilon}\e^{-c( \log N)^{\xi M}}\,.
\end{align*}
The last quantity can be made summable in $N$ by choosing $M$ large enough so that, by the first Borel-Cantelli lemma, we are done. Notice that we ignored the fact that the graph induced on the $K$-th  the graph induced on $S_K=\{(K-1)L,\dots,N\}$ might be of a size larger than $L$; since this size cannot be larger than $2L-1$, though, the proof can be easily adapted.

\subsection{Proof of Proposition \ref{ratiodegrees}}
First of all we claim that, $\P$-a.s.~for all $N$ large enough,
\begin{align}\label{prot}
N
	\leq|E(G)|
	\leq N(\log N)^4.
\end{align}
The lower bound is obvious. 
We begin by bounding, for any $x\in \T_N$, and considering $N$ odd for simplicity,
\begin{align*}
\E[D_x\,|\,W_x] 
	& = 2+ 2\sum_{j=2}^{(N-1)/2} \E\big[1-\e^{-W_x W_{x+j}j^{-\alpha}}\,|\,W_x\big]\\
	&\leq 2+2\sum_{j=2}^{(N-1)/2}\Big( \E\Big[\tfrac{W_x W_{x+j}}{j^{\alpha}}\1{W_{x+j}<j^\alpha/W_x}\,|\,W_x\Big]
	+\P(W_{x+j}\geq \tfrac{ j^\alpha}{W_x}\,|\,W_x)\Big).
\end{align*}
By an elementary calculation, one can see that the $j$-th term of the sum equals $1$ if $j^\alpha<W_x$, while it is smaller than $c_1W_x^{\tau-1}/j^\gamma$ for some $c_1>0$ if $j^\alpha>W_x$ . %, for some constant $c_1>0$ a constant depending only on $\tau$. 
Hence
\begin{align}\label{plpsi}
\E[D_x\,|\,W_x] 
	& \leq 2 W_x^{1/\alpha} +2c_1  \sum_{j=W_x^{1/\alpha}}^{(N-1)/2}\frac{W_x^{\tau-1}}{j^\gamma}
	\leq c_2 W_x^{1/\alpha}
\end{align}
where the last inequality can be checked by using the approximation of sums by definite integrals.
Notice that $D_x$ cannot be larger than $N$, so \eqref{plpsi} implies 
\begin{align}\label{stab}
\E[D_x\,|\,W_x] 
& \leq c_2 W_x^{1/\alpha}\wedge N\,.
\end{align}
Furthermore, for all $t>1$, it holds%$D_x$ is never much larger than the quantity on the r.h.s.~of \eqref{stab}. In fact, it holds, 
\begin{align*}
\P(D_x >t \,\E[D_x\,|\,W_x]\,|\, W_x)
	\leq \e^{-ct}\,.
\end{align*}
The inequality can be checked via Bernstein's inequality, see Lemma \ref{lemmabernstein} with the $X_i$'s given by $\mathbbm 1_{\{x{\leftrightarrow}x+i\}}$  for $i=1,\dots, N-1$, taking  $M=1$ and noticing that  
$$
\sigma^2=
	\frac{1}{N-1}\sum_{i=1,\dots,N-1}\Var(\mathbbm 1_{\{x{\leftrightarrow}x+i\}}\,|\,W_x)\leq \frac{1}{N-1}\sum_{i=1,\dots,N-1}\E[\mathbbm 1_{\{x{\leftrightarrow}x+i\}}\,|\,W_x]=\E[D_x\,|\, W_x]\,.
$$
This ensures that
\begin{align*}
\P\big(\exists x:\,D_x>(\log N)^2 \E[D_x\,|\,W_x]\big)
	&\leq N\,\E\big[\P\big(D_x>(\log N)^2 \E[D_x\,|\,W_x]\,|\,W_x\big)\big]
	\leq N\e^{-c (\log N)^2},
\end{align*}
which is summable in $N$: we can conclude, thanks to the first Borel-Cantelli lemma and using \eqref{stab}, that almost surely for $N$ large enough
\begin{align}\label{hly}
D_x
	\leq \{c_2 W_x^{1/\alpha}\wedge N\} (\log N)^2\qquad \qquad\forall\, x\in\T_N\,.
\end{align}
We are now ready to  bound the total number of edges in $G$. Thanks to \eqref{hly} we get
\begin{align*}
\P(|E(G)|>N(\log N)^4)
	&\leq  \P\Big( \sum_{x\in \T_N}\{c_2 W_x^{1/\alpha}\wedge N\} >N(\log N)^2\Big)\,.
\end{align*}
We use once more Bernstein's inequality: we take in Lemma \ref{lemmabernstein} the independent variables $X_i=\{c_2 W_i^{1/\alpha}\wedge N\}$, the value $M=N$ and bound
\begin{align*}
\sigma^2
	\leq \E\big[c_2^2W_1^{2/\alpha}\wedge N^2\big]
	=\int_1^{N^2}\P\big(c_2^2W_1^{2/\alpha}>t\big)\,{\rm d}t
	\leq c_3 N^{2(1-\gamma/2)}\leq c_3 N\,.
\end{align*}
Bernstein's inequality then yields
\begin{align*}
\P(|E(G)|>N(\log N)^4)
	\leq \e^{-c(\log N)^2}\,.
\end{align*}
The last quantity is summable in $N$, so the Borel-Cantelli lemma finally gives the upper bound in  \eqref{prot}.

\smallskip

We turn our attention to $|E(\tGamma)|$. This is the number of edges in a SFP model with $K$ vertices and $\tgamma<1$. 
By item \ref{coscc:ii} in Proposition \ref{coscc} on the one hand, and by \eqref{corri} on the other, we know that, $\P$-a.s.~for all $N$ large enough,
\begin{align}\label{lprcs}
| E(\overline \tGamma)| \leq |E(\tGamma)|\leq |E(\overline{ \tGamma})| (\log K)^{c} \,.
\end{align}
where $\overline{\tGamma}$ indicates the simplified model described in Section \ref{simplifiedmodel}. 
At the same time, item \ref{coscc:v} in Proposition  \ref{coscc} tells us that 
\begin{align}\label{singrm}
K^{2-\tgamma}\lesssim 
|E(\overline{ \tGamma})|\lesssim K^{2-\tgamma}\,.
\end{align}
Since $2-\tgamma=1+\varepsilon/(4-2\gamma-2\varepsilon)$ and recalling that $K=N/L\geq  N^{2-\gamma-\varepsilon}$, \eqref{lprcs} and \eqref{singrm} yield, for some $c_4, c_5 >0$ that can be chosen independently of $\varepsilon$, 
\begin{align*}
N^{2-\gamma-c_4\varepsilon}
	\lesssim |E(\tGamma)|
	\lesssim N^{2-\gamma-c_5\varepsilon}\,.
\end{align*}
This, together with \eqref{prot}, implies~\eqref{pscl}.

\subsection{Proof of Proposition \ref{ratiopis}}
For simplicity in this proof we abbreviate $D_{S_j}$ for $D_{S_j}(G)$ and $\tD_j$ for $D_j(\tGamma)$.
We use the lower bound in \eqref{pscl} to see that %for all $j=1,\dots,N$
\begin{align}\label{umpl}
\frac{\pi_{G}(S_j)}{\pi_{\tGamma}(j)}
	\leq N^{1-\gamma} \frac{D_{S_j}}{\tD_j}\qquad \qquad\forall {j\in\T_K}\,.
\end{align}
Therefore we want to show  that there exists $c>0$ such that, $\P$-a.s.~for $N$ large enough, for all ${j\in\T_K}$
\begin{align}\label{gcstl}
 \frac{D_{S_j}}{\tD_j}
	\lesssim  N^{\gamma-1+\varepsilon}
\end{align}
which together with \eqref{umpl} implies \eqref{gglne}.
We claim that, $\P$-a.s.~for all $N$ large enough, for all ${j\in\T_K}$ 
\begin{align}
D_{S_j}
	&\lesssim \Big\{L \vee \max_{x\in S_j}W_x^{1/\alpha}\Big\}\wedge N
\label{plnt}\\
\tD_j
&\gtrsim
\Big\{L^{-1}\max_{x\in S_j}W_x^{\tau-1} \vee 1\Big\}\wedge N^{2-\gamma-\varepsilon}\label{ose}
\end{align}
Before proving \eqref{plnt} and \eqref{ose} we show how to conclude by analyzing all possible cases. Fix $j$ and abbreviate $W:=\max_{x\in S_j}W_x$. Recall that $L=\lfloor N^{\gamma-1+\varepsilon}\rfloor$. We point out  that in principle some of the cases listed below might be empty, depending on the values of $\alpha$ and $\tau$.

\smallskip

\begin{enumerate}[label={\textbf{Case \arabic*.}}]
	\item $W>L^\alpha$. In this case we obtain 
	$
	D_{S_j}\lesssim W^{1/\alpha}\wedge N  \mbox{ and } \tD_j\gtrsim L^{-1}W^{\tau-1} \wedge N^{2-\gamma-\varepsilon}$. Here we distinguish two further sub-cases.
	
	\smallskip
	
		\begin{enumerate}[label*={\textbf{\alph*.}}]
		\item {$W>N^{1/(\tau-1)}$}. We have $D_{S_j}\lesssim N$ and  $\tD_j\gtrsim N^{2-\gamma-\varepsilon}$, so \eqref{gcstl} is verified.
		
		\smallskip
		
			\item $W\leq N^{1/(\tau-1)}$. We get $D_{S_j}\lesssim W^{1/\alpha}$ and $\tD_j\gtrsim L^{-1}W^{\tau-1}$, so that \eqref{gcstl} is  verified  since $W^{-(\tau-1)+1/\alpha}\leq 1$.
	\end{enumerate}

	\smallskip

	\item $W\leq L^\alpha$. Here we have 
	$D_{S_j}\lesssim L\leq  N^{\gamma-1+\varepsilon}$, while for \eqref{ose} we distinguish again sub-cases.
	
	\smallskip
	
	\begin{enumerate}[label*={\textbf{\alph*.}}]
		\item $W>L^{1/(\tau-1)}$. It holds  $\tD_j\gtrsim L^{-1}W^{\tau-1} \wedge N^{2-\gamma-\varepsilon}\geq N^{-(\gamma-1+\varepsilon)}W^{\tau-1} \wedge N^{2-\gamma-\varepsilon}$. If $W^{\tau-1}>N$, then $\tD_j\gtrsim N^{2-\gamma-\varepsilon}$, so that
		$D_{S_j}/\tD_j\lesssim N^{2\gamma-3+2\varepsilon}\leq N^{\gamma-1}$. 
		If instead $W^{\tau-1}\leq N$, then $\tD_j\gtrsim  N^{-(\gamma-1+\varepsilon)}W^{\tau-1}$, so that $D_{S_j}/\tD_j\lesssim N^{2(\gamma-1+\varepsilon)}W^{-(\tau-1)}\leq N^{2(\gamma-1+\varepsilon)}N^{-1}\leq N^{\gamma-1}$. So in both cases \eqref{gcstl} is verified.
		
		\smallskip
		
			\item $W\leq L^{1/(\tau-1)}$. In this case again $\tD_j\gtrsim N^{2-\gamma-\varepsilon}$ and \eqref{gcstl} holds.
	\end{enumerate}
\end{enumerate}
We move to the proof of \eqref{plnt} and \eqref{ose}. For equation 
\eqref{ose} we recall the simplified model described at the beginning of Section \ref{sec:gammino} and write $\overline\tD_j$ for the degree of node $j$ in the simplified model related to $\tGamma$. We have, using item \ref{coscc:ii} of Proposition \ref{coscc},
\begin{align*}
\tD_j 
	\geq \overline \tD_j
	\stackrel{\eqref{expecteddegreesimplified},\,\eqref{degreeconcentrationsimplified}}{\gtrsim }\E[\overline \tD_j\,|\,\tW_j]
	&\stackrel{\eqref{expecteddegreesimplified}}{= }
	(K-1)\cdot\Big\{K^{-\tgamma}\tW_j^{\tau-1}(\log K)^{-2(\tau-1)}\wedge 1\Big\}\\
	&\stackrel{\eqref{pfa}}{\gtrsim } K^{1-\tgamma}\big\{L^{-1}\max_{x\in S_j}W_x^{\tau-1}\vee 1\big\}\wedge K	
\end{align*}
which yields \eqref{ose} since $K^{1-\tgamma}\geq 1$ and recalling that $K=NL^{-1}\geq N^{2-\gamma-\varepsilon}$.

\smallskip

For equation \eqref{plnt}, we first of all notice that, by \eqref{hly},
\begin{align*}
D_{S_j}
	\lesssim \sum_{x\in S_j}  W_x^{1/\alpha}\,.
\end{align*}
Call 
$$
Y_j:=\sum_{x\in S_j}  W_x^{1/\alpha}\,,\qquad \quad M_j:=\max_{x\in S_j}W_x^{1/\alpha}.
$$  
We claim that there exists a constant $Q>0$ such that 
\begin{align}\label{pgna}
\P(\exists {j\in\T_K}\,:\,Y_j\geq Q (L \vee M_j))\;\,\mbox{{ is summable in $N$}. }
\end{align}
Thanks to the Borel--Cantelli lemma, this implies \eqref{plnt} by also noticing that $D_{S_j}\lesssim N$ by \eqref{prot}. Let us show \eqref{pgna}. 

We focus on $j=1$. Call $Y:=Y_1$ and $M:=M_1$ and let $\mu:=\E[W_x^{1/\alpha}]<\infty$. We distinguish between the cases where $M_j$ is smaller or larger than $L$. In the first case we can use directly the Fuk--Nagaev inequality \eqref{fuknagaev} with $y=L$ and $x=(Q-\mu)L$ to get that there exists a constant $c>0$ such that
\begin{align}\label{frnc}
\P(Y\geq Q (L \vee M)\,,\,M\leq L)
	\leq \Big(cL\frac{L^{1-\gamma}}{(Q-\mu)L}\Big)^{Q-\mu}
	\leq L^{-(Q-\mu)(\gamma-1)}\,,
\end{align}
where the last inequality holds for $Q$ large enough.

When instead the maximum exceeds $L$ we proceed as follows. First of all we divide the possible values of $M$ in intervals and bound
\begin{align}\label{crshrg}
\P(Y\geq  Q (L \vee M)\,,\,M> L)
	%&=\sum_{\ell=1}^{\infty} \P\big(Y\geq Q M\,,\,M\in (2^{\ell-1}L,2^\ell L]\big)\\
	&\leq \sum_{\ell=1}^{\infty} \P\Big(Y\geq Q 2^{\ell-1}L\,,\,M\in (2^{\ell-1}L,2^\ell L]\Big)\,.
\end{align}
At the cost of a union bound we can suppose that $W_L$ is the largest $W_x$ in $S_1$, so that, for each $\ell$, the $\ell$-th summand in the last display can be dominated by
%with a union bound on which of the $W_x$'s realizes the maximum, so it is smaller than
\begin{align}\label{dvnw}
L\, \P\Big(Y\geq  Q 2^{\ell-1}L\,&,\,W_L^{1/\alpha}\in (2^{\ell-1}L,2^\ell L]\,,\, M'\leq W_L^{1/\alpha}\leq 2^\ell L\Big)\nonumber\\
	&\leq L\,\P\big(W_L^{1/\alpha}\in(2^{\ell-1}L,2^\ell L] \big) \, \P\Big(Y'+2^\ell L\geq  Q 2^{\ell-1}L\,,\,M' \leq 2^\ell L\Big)
\end{align}
where $Y':=\sum_{x=1}^{L-1}W_x^{1/\alpha}$ and $M':=\max_{x=1,\dots,L-1}W_x^{1/\alpha}$. On the one hand,
\begin{align}\label{c2g}
\P\big(W_L^{1/\alpha}\in(2^{\ell-1}L,2^\ell L] \big)
	\leq 2^{-\gamma(\ell-1)} L^{-\gamma}
\end{align}
and on the other, using again the Fuk--Nagaev inequality \eqref{fuknagaev}, for $Q$ large enough,
\begin{align}\label{ejy}
\P\big(Y'+2^\ell L\geq Q 2^{\ell-1}L\,,\,M' \leq 2^\ell L\big)
	& \leq  \P\big(Y'\geq \mu L  +Q' 2^{\ell}L\,,\,M' \leq 2^\ell L\big)\nonumber\\
	&\leq \Big(c L \frac{(2^\ell L)^{1-\gamma}}{Q' 2^\ell L}\Big)^{Q'}
	%= c_2 L^{-Q'(\gamma-1)}2^{-\gamma \ell Q'}
	\leq L^{-Q'(\gamma-1)}
\end{align}
where $Q'=\tfrac Q 2 -1 -\tfrac \mu 2$ is a constant that can be made arbitrarily large by taking $Q$ large. Summing up, using \eqref{dvnw}, \eqref{c2g} and \eqref{ejy} into \eqref{crshrg} we obtain
\begin{align*}
\P(Y\geq  Q (L \vee M)\,,\,M> L)
	\leq \sum_{\ell=1}^\infty L^{1-\gamma} 2^{-\gamma(\ell-1)}L^{-Q'(\gamma-1)}
	= 2 L^{-(Q'+1)(\gamma-1)}\,.
\end{align*}

This last expression together with \eqref{frnc} show that
\begin{align*}
\P(\exists j\in\{1,\dots,K\}\,:\,Y_j\geq Q(L\vee M_j))
	\leq K \P(Y\geq  Q (L \vee M))
	\leq K L^{-Q''}
\end{align*}
where $Q''$ is a constant that can be made arbitrarily large by taking $Q$ large. Since $L=\lfloor N^{\gamma-1+\varepsilon}\rfloor$ and $K=N/L$, by taking $Q$ large enough we can make \eqref{pgna} true, which in turn implies \eqref{plnt} as mentioned before. This concludes the proof of Proposition \ref{ratiopis}.\qed

\subsection{Proof of Proposition \ref{ags}}
%Since in this proof we only deal with the graph $G$, we will ease the notation by writing $\pi$ for $\pi_{G}$, $D$ for $D_{G}$ and so on.
Fix any ${j\in\T_K}$ and call $z^*$ the point in $S_j$ realizing the maximum of $\pi_{G}(z)$ in $S_j$. We notice that
\begin{align*}
\sum_{z\neq w \in S_j}\pi_{G}(w)\pi_{G}(z)
	&=\pi_{G}(z^*)\sum_{w\neq z^*}\pi_{G}(w)+\sum_{z\neq z^*}\pi_{G}(z)\sum_{w\neq z}\pi_{G}(w)\\
	&\leq 2\pi_{G}(S_j)\big(\pi_{G}(S_j)-\pi_{G}(z^*)\big)\,.
\end{align*}
By using that $|E(G)|\geq N$, the probability on the left-hand side~of \eqref{gglne2} can be therefore upper bounded using also a union bound by
\begin{align*}
\mbox{l.h.s. of~\eqref{gglne2}}
	\leq K\, \P\big(D_{S_1}\big(D_{S_1}-D^{*}_{S_1}\big)>N^{\gamma}\big) 
\end{align*}
with $D^{*}_{S_1}:=\max_{x\in S_1}D_x$. We call
\begin{align*}
Y:=\sum_{x\in S_1}W_x^{1/\alpha},\qquad  \qquad M:=\max_{x\in S_1}W_x^{1/\alpha}.
\end{align*}
Equation \eqref{plnt} in the proof of Proposition \ref{ratiopis} and equation \eqref{hly} imply that there exists a $c>0$ such that, for $N$ large enough, $\P$-a.s.,
\begin{align*}
D_{S_1}
	\leq (\log N)^{c/2} \big\{M\vee L\big\}\qquad
		\mbox{and}\qquad
	D_{S_1}-D^{*}_{S_1}\leq (\log N)^{c/2}(Y-M)\,.
\end{align*}
%for some constant $c>0$, whereas \eqref{hly} implies that
%\begin{align*}
%D_{S_1}-D^{\max}_{S_1}\leq (\log N)^{c/2}(Y-M)
%\end{align*}
%for some other constant $c>0$.
Therefore, in order to show that the left-hand side~of \eqref{gglne2} tends to zero, it will be enough to prove
\begin{align}\label{dea}
\lim_{N\to\infty}K\, \P\big(\big\{M\vee L\big\}(Y-M)>N^{\gamma}(\log N)^{-c}\big)
	=0\,.
\end{align}
We bound this probability depending on the value of $M$.
\begin{description}[leftmargin=*]
\item[Case $M<L(\log N)^c$]
first of all we use that $\big\{M\vee L\big\}<L(\log N)^c$ to see that
\begin{align*}
\P\big(\big\{M\vee L\big\}(Y-M)>\tfrac{N^{\gamma}}{(\log N)^{c}}\,,\, M<L(\log N)^c\big)
	\leq \P\big(Y>\tfrac{N^{\gamma}}{L(\log N)^{2c}}\,,\, M<L(\log N)^c\big)\,.
\end{align*}
Then, since $\mu:=\E[W_x^{1/\alpha}]<\infty$ and since $N^{\gamma}L^{-1}(\log N)^{-2c}\geq 2\mu L$ %N^{1-\varepsilon}2^{-1}> N^{\gamma-1+\varepsilon}\geq L$ 
by \eqref{miktti}, one obtains
\begin{align*}
\P\big(Y>\tfrac{N^{\gamma}}{L(\log N)^{2c}}\,,\, M<L(\log N)^c\big)
	\leq \P\big(Y>\tfrac{N^{\gamma}}{2L(\log N)^{2c}}+\mu L\,,\, M<L(\log N)^c\big)\,.
\end{align*}
Using the Fuk--Nagaev inequality \eqref{fuknagaev} we bound the right hand side of the last display with
\begin{align*}
\Big(c_1 L\frac{(L(\log N)^c)^{1-\gamma}}{N^\gamma L^{-1}(\log N)^{-2c}}\Big)^{\tfrac{N^\gamma}{2L(\log N)^{2c}}\tfrac{1}{L(\log N)^c}}
	\leq \Big((\log N)^{c_2}L^{3-\gamma}N^{-\gamma}\Big)^{N^{\gamma}L^{-2}(\log N)^{-4c}}
\end{align*}
for some $c_1,c_2>0$.
Since there exists $\delta>0$ such that $L^{3-\gamma} N^{-\gamma}<N^{-2\delta}$ and $N^\gamma L^{-2}>N^{2\delta}$, for $N$ large enough we can bound
\begin{align}\label{plut}
\P\big(\big\{M\vee L\big\}(Y-M)>\tfrac{N^{\gamma}}{(\log N)^{c}}\,,\, M<L(\log N)^c\big)
	\leq N^{-\delta N^{\delta}}\,.
\end{align} 
\item[Case $M\ge N^{1-2\varepsilon}$] when the maximum is large, one can simply bound
\begin{align}\label{ppo}
\P\big(\big\{M\vee L\big\}(Y-M)>\tfrac{N^{\gamma}}{(\log N)^{c}}\,,\, M\geq N^{1{-2}\varepsilon}\big)
	&\leq \P\big(M\geq N^{1{-2}\varepsilon}\big)%\nonumber\\
	%&= 1-\big(1-\P(W_x^{1/\alpha}>N^{1\mic{-2}\varepsilon})\big)^L
	\leq N^{-1+2\varepsilon \gamma}\,.
\end{align}
\item[Case $M\in[L(\log N)^c,\,N^{1{-2}\varepsilon})$]
in this case $\{M\vee L\}=M$ and we divide the possible values of $M$ in intervals. Let $\ell_{\min}:=c\log_2\log N$ and $\ell_{\max}:=(2-\gamma{-3\varepsilon})\log_2N$ (here and below we are ignoring the fact that $\ell_{\min}$ and $\ell_{\max}$ might not be integers, in which case we could just take their integer part). We partition
\begin{align*}%\label{ignazio}
\P\big(M(Y-M)>&\tfrac{N^{\gamma}}{(\log N)^{c}}\,,\, M\in[L(\log N)^c,\,N^{1{-2}\varepsilon})\big)
	= \sum_{\ell=\ell_{\min}+1}^{\ell_{\max}}
	\P\big(Y-M>\tfrac{N^{\gamma}}{M(\log N)^{c}}\,,\, M\in I_\ell\big)
\end{align*}
where  $I_\ell:=[L2^{\ell-1},L2^{\ell})$. We bound now the  probability in the sum.
At the cost of a union bound we can suppose that $W_L$ is the largest $W_x$ in $S_1$. Calling $Y':=\sum_{x=1}^{L-1}W_x^{1/\alpha}$, it holds 
\begin{align}\label{cpd}
\P\big(Y-M>\tfrac{N^{\gamma}}{M(\log N)^{c}}\,,\, M\in I_\ell\big)
	&\leq L\,\P\big(Y-M>\tfrac{N^{\gamma}}{M(\log N)^{c}}\,,\, M=W_L^{1/\alpha}\in I_\ell\big)\nonumber\\
	&\leq L\,\P(W_L^{1/\alpha}\in I_\ell)\P\big(Y'>\tfrac{N^{\gamma}}{L2^\ell(\log N)^{c}}\big)\,.
\end{align}
By \eqref{pgna} and the first Borel-Cantelli lemma, we know that with probability $1$ there exists a $Q>0$ such that, for all $N$ large enough, $Y'<Y<Q(L\vee M)$. 
Therefore, keeping into account that $N^{\gamma}L^{-1}2^{-\ell}(\log N)^{-c}Q^{-1}> L$ for all $\ell\in[\ell_{\min},\ell_{\max}]$, for $N$ large enough
\begin{align*}
\P\big(Y'>\tfrac{N^{\gamma}}{L2^\ell(\log N)^{c}}\big)
	&\leq \P(M>\tfrac{N^{\gamma}}{QL2^\ell(\log N)^{c}})%\\
	%&\leq 1-\big(1-\big(\tfrac{N^{\gamma}}{QL2^\ell(\log N)^{2}}\big)^{-\gamma}\big)^L
	\lesssim N^{-\gamma^2}L^{1+\gamma}2^{\gamma {\ell}}\,.%(\log N)^{2c\gamma}\,.
\end{align*}
Plugging this into \eqref{cpd} and using \eqref{c2g} yields
\begin{align*}
\P\big(Y-M>\tfrac{N^{\gamma}}{M(\log N)^{c}}\,,\, M\in I_\ell\big)
	\lesssim N^{-\gamma^2}L^2
	\leq N^{-\gamma^2+2\gamma-2+2\varepsilon}
\end{align*}
for all $\ell_{\min}\leq \ell\leq \ell_{\max}$. 
Since the number of $\ell$'s is logarithmic, we conclude that
\begin{align}\label{doduc}
\P\big(\big\{M\vee L\big\}(Y-M)>&\tfrac{N^{\gamma}}{(\log N)^{c}}\,,\, M\in[L(\log N)^c,\,N^{1{-2}\varepsilon}\big)
	\lesssim N^{-\gamma^2+2\gamma-2+2\varepsilon}.
\end{align}
\end{description}

We gather the results in \eqref{plut}, \eqref{ppo} and \eqref{doduc} to see that
\begin{align*}
K\, \P\big(\big\{M\vee L\big\}(Y-M)>N^{\gamma}(\log N)^{-c}\big)
	&\lesssim N^{2-\gamma-\varepsilon}(N^{-\delta N^\delta}+N^{-1+2\varepsilon\gamma}+N^{-\gamma^2+2\gamma-2+2\varepsilon})\\
	&\lesssim N^{-(\gamma-1)+3\varepsilon}
\end{align*}
which proves \eqref{dea} for $\varepsilon$ small enough.\qed

\section{Case {$\alpha>2,\,\gamma>2$}: upper bound of Theorem {\ref{brng}}}\label{tbls}

For the reader's convenience we restate the upper bound of Theorem \ref{brng} as a Proposition:
\begin{proposition}\label{prop:gammaalphagrande}
	Let $\alpha>2$ and $\gamma>2$. There exists $c>0$ such that, $\P$-a.s.~for all $N$ large enough, we have
	\begin{align*}
	\tmix(G_N)\leq N^2(\log N)^c\,.
	\end{align*}
\end{proposition}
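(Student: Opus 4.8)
The plan is to bound the mixing time in terms of the maximal hitting time of the chain, which is a standard route when one expects diffusive behaviour $t_{\rm mix}\asymp N^2$ on a graph that is ``one-dimensional at large scales''. Recall the general fact (see e.g.~\cite{LP17}) that $t_{\rm mix}(G_N)\lesssim \max_{x,y\in\T_N}\E_x[\tau_y]$, where $\tau_y$ is the hitting time of $y$; in fact it suffices to control $t_{\rm hit}:=\max_{x,y}\E_x[\tau_y]$ together with the fact that the chain is lazy and reversible. So the whole proof reduces to showing that, $\P$--a.s.\ for $N$ large, $t_{\rm hit}\leq N^2(\log N)^c$ for some $c>0$.

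To estimate hitting times I would use the electrical-network interpretation of the reversible chain: for the network with conductances $c(x,y)=\1{x\leftrightarrow y}$ (so that the total conductance is $D_{G_N}=2|E(G_N)|$), one has $\E_x[\tau_y]\leq \sum_z D_z\,R_{\rm eff}(x\leftrightarrow z)$-type bounds, and more efficiently the commute-time identity $\E_x[\tau_y]+\E_y[\tau_x]=D_{G_N}\,R_{\rm eff}(x\leftrightarrow y)$. Thus it is enough to show (i) $D_{G_N}\lesssim N$, i.e.\ a linear number of edges, and (ii) $R_{\rm eff}(x\leftrightarrow y)\lesssim N$ for all $x,y$, i.e.\ the effective resistance across the torus is at most of the order of the number of vertices (up to polylog). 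For (ii) the cheap direction is Rayleigh monotonicity: discarding all edges except the hard-wired nearest-neighbour cycle $x\sim x+1$ gives $R_{\rm eff}(x\leftrightarrow y)\leq \|x-y\|\leq N$, with no logarithmic loss at all. For (i), the concentration of the total degree around its mean is exactly the content of Lemma~\ref{conc_Dtot}: since $\gamma>2$ the weights have finite mean and the expected degree of each vertex is $\E[D_x\,|\,W_x]\lesssim W_x^{1/\alpha}$ with $\E[W_x^{1/\alpha}]<\infty$ (as $\gamma>\alpha$ here because $\tau>1$ forces... actually $\gamma=\alpha(\tau-1)>2$ does not give $\gamma>\alpha$ directly, but $\alpha>2$ and a finite first moment of $W^{1/\alpha}$ needs $\tau-1>1/\alpha$, i.e.\ $\gamma>1$, which holds); a second-moment computation as in Lemma~\ref{conc_Dtot} then yields $D_{G_N}=\Theta(N)$ with overwhelming probability, and Borel--Cantelli upgrades this to an almost sure statement for $N$ large. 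Combining, $\E_x[\tau_y]+\E_y[\tau_x]\leq D_{G_N}\cdot R_{\rm eff}(x\leftrightarrow y)\lesssim N\cdot N=N^2$, hence $t_{\rm hit}\lesssim N^2$ up to a slowly varying (in fact polylogarithmic) factor, and the displayed bound on $t_{\rm mix}(G_N)$ follows.

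The main obstacle I anticipate is not the resistance bound — Rayleigh monotonicity handles it trivially — but controlling the total number of edges with the required uniformity and almost-sure strength. One must rule out that a rare heavy weight $W_x$ creates a vertex of degree comparable to $N$, which would spoil the linear bound on $D_{G_N}$; the tail $\P(W_x^{1/\alpha}>N)=N^{-\gamma}$ with $\gamma>2$ makes even $\max_x W_x^{1/\alpha}=\so(N)$ a.s., but one still needs a Bernstein/Fuk--Nagaev-type argument (exactly as in the proof of Proposition~\ref{ratiodegrees}, equations \eqref{stab}--\eqref{prot}) to get the concentration of $\sum_x\{c_2 W_x^{1/\alpha}\wedge N\}$ around $cN$ with a summable-in-$N$ error probability. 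The other mild technical point is to justify the hitting-time-to-mixing-time bound in the lazy reversible setting and to handle the union over the $N^2$ pairs $(x,y)$ for (ii), both of which are routine.
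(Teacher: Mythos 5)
Your proof is correct and follows essentially the same route as the paper: bound $\tmix$ by the maximal hitting time, bound the hitting time by (number of edges)$\times N$, and invoke the degree concentration of Lemma~\ref{conc_Dtot}. The only difference is cosmetic: you unroll the commute-time identity plus Rayleigh monotonicity on the hard-wired cycle to get $t_{\rm hit}\lesssim |E(G_N)|\cdot N$, whereas the paper cites this directly as \citet[Proposition 10.7]{levinperes}. One small miscalibration in your discussion of obstacles: in the regime $\alpha>2,\gamma>2$ the paper does \emph{not} need a Bernstein or Fuk--Nagaev argument for $D_{G_N}\lesssim N$ (those tools appear in Proposition~\ref{ratiodegrees}, which is for the $\tau<2$ phase); Lemma~\ref{conc_Dtot} is a plain second-moment method --- $\Var(D_{G_N})\lesssim N$ via the decorrelation estimate of Lemma~\ref{GRA}, then Chebyshev and Borel--Cantelli --- and a single heavy vertex of degree $\mathrm{O}(N)$ costs only $\mathrm{O}(N)$ in the total degree, so it is not the threat you flag.
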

Before proving the Proposition, we need two lemmas.
\begin{lemma}\label{GRA}
 Let $\alpha>2$ and $\gamma>2$. There exists $c>0$ such that we have
 \begin{equation}\label{Exp_Dtot}
  \E[D_{G_N}]{\leq cN}%\lesssim N
 \end{equation}
and
 \[
  \Var(D_{G_N}){\leq cN}.%\lesssim N.
 \]
\end{lemma}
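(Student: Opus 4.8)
The plan is to compute the expectation and variance of $D_{G_N}=\sum_{x\in\T_N}D_x$ directly, exploiting the product structure of the weights and the conditional independence of the edges. First I would write $D_{G_N}=2\sum_{\{x,y\}}\1{x\leftrightarrow y}$ (sum over unordered pairs), so that $\E[D_{G_N}]=2\sum_{\{x,y\}}\P(x\leftrightarrow y)$. Since neighbouring vertices are always connected, those $N$ pairs contribute $\bO(N)$; for the remaining pairs with $\|x-y\|>1$ I invoke Lemma \ref{bosh}, which in the regime $\tau>2$ gives $\P(x\leftrightarrow y)\le c\|x-y\|^{-\alpha}$. Summing over $y$ at fixed $x$ yields $\sum_{d\ge 2} c\,d^{-\alpha}<\infty$ because $\alpha>2>1$, so $\E[D_{G_N}]\le cN$, which is \eqref{Exp_Dtot}.

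For the variance I would use $\Var(D_{G_N})=4\,\Var\!\big(\sum_{\{x,y\}}\1{x\leftrightarrow y}\big)=4\sum_{\{x,y\},\{u,v\}}\mathrm{Cov}\big(\1{x\leftrightarrow y},\1{u\leftrightarrow v}\big)$. Conditionally on the weights $(W_x)$ the edge indicators are independent, so the covariance decomposes via the conditional-variance formula: $\mathrm{Cov}(\1{x\leftrightarrow y},\1{u\leftrightarrow v})=\E\big[\mathrm{Cov}(\cdots\mid W)\big]+\mathrm{Cov}\big(\P(x\leftrightarrow y\mid W),\P(u\leftrightarrow v\mid W)\big)$. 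The first term vanishes unless $\{x,y\}=\{u,v\}$, contributing $\sum_{\{x,y\}}\E[\mathrm{Var}(\1{x\leftrightarrow y}\mid W)]\le\sum_{\{x,y\}}\P(x\leftrightarrow y)=\bO(N)$ by the expectation bound already proven. The second term is nonzero only when the two pairs share a vertex (if they are vertex-disjoint the corresponding conditional connection probabilities depend on disjoint sets of i.i.d.\ weights, hence are independent). So I am left to bound $\sum_{x}\sum_{y\neq x}\sum_{z\neq x}\mathrm{Cov}\big(\P(x\leftrightarrow y\mid W_x,W_y),\P(x\leftrightarrow z\mid W_x,W_z)\big)$, where the only shared randomness is $W_x$.

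The main work — and the place I expect the only real subtlety — is estimating this shared-vertex sum. Using $\P(x\leftrightarrow y\mid W_x,W_y)=1-\e^{-W_xW_y\|x-y\|^{-\alpha}}\le W_xW_y\|x-y\|^{-\alpha}$ together with the conditional independence given $W_x$, the covariance is at most $\E\big[W_x^2\big]\,\|x-y\|^{-\alpha}\|x-z\|^{-\alpha}\,\E[W_y]\,\E[W_z]$ minus the product of the two one-dimensional expectations; crucially $\tau>2$ ensures $\E[W_1]<\infty$, and one needs $\gamma=\alpha(\tau-1)>2$, i.e.\ $\E[W_1^{2/\alpha}]<\infty$ (in fact $2<\alpha(\tau-1)$ guarantees moments of $W_x$ of sufficient order after the truncation is handled carefully, splitting on whether $W_xW_y$ exceeds $\|x-y\|^\alpha$ as in the proof of Lemma \ref{bosh}). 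Carrying the bound through, $\sum_{y\neq x}\|x-y\|^{-\alpha}$ and $\sum_{z\neq x}\|x-z\|^{-\alpha}$ are each $\bO(1)$ since $\alpha>1$, so the shared-vertex contribution is $\bO(N)$ as well. Combining the two parts gives $\Var(D_{G_N})\le cN$. The hypothesis $\gamma>2$ (beyond merely $\tau>2$) is exactly what is needed to make the truncated second-moment term of the weights finite; this bookkeeping — mirroring the truncation split in Lemma \ref{bosh} — is the only delicate point, the rest being summable geometric-type series in the distance.
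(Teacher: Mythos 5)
Your expectation bound is correct and, if anything, cleaner and more self-contained than the paper's, which outsources the bounded-degree-moments claim to an external theorem of Deijfen--van der Hofstad--Hooghiemstra before observing that the torus quantities are dominated by their $\Z$ counterparts. For the variance, your decomposition via the law of total covariance conditionally on the weight field is genuinely different from the paper's organization: the paper expands $\E[D_xD_y]$ directly in edge indicators (equation \eqref{sound}), isolates the pairs of indicators that share a vertex, and bounds those products by $\P(A\cap B)\le\P(A)\wedge\P(B)$ together with Lemma \ref{bosh}. Both routes boil down to controlling the same overlapping-pair terms, so the conditional-covariance decomposition is a perfectly good alternative scaffolding.

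The gap is in the key estimate of the shared-vertex covariance. You bound $\mathrm{Cov}\big(\P(x\leftrightarrow y\mid W),\,\P(x\leftrightarrow z\mid W)\big)$ by $\E[W_x^2]\,\|x-y\|^{-\alpha}\|x-z\|^{-\alpha}\,\E[W_y]\E[W_z]$, but $\E[W_x^2]=\infty$ whenever $\tau\le 3$, a range that is fully compatible with the hypotheses $\alpha>2$, $\gamma>2$ (and $\E[W_y]=\infty$ too if $\tau\le 2$, which the hypotheses also allow). You flag this and propose a truncation ``as in Lemma \ref{bosh}'', but the natural truncation on $W_x$ (bound $g_y(W_x)\lesssim W_x\|x-y\|^{-\alpha}$ on $\{W_x\le T\}$ and use the tail $T^{-(\tau-1)}$ on the complement) produces a contribution of order $\|x-y\|^{-\alpha}\|x-z\|^{\alpha(2-\tau)}$, whose sum over $y,z$ converges only if $\gamma>2+\alpha$ -- a strictly stronger hypothesis than $\gamma>2$. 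The clean fix, which is the same trick the paper uses in display \eqref{lengthy_eq} just transplanted into your conditional setting, is to use that the conditional connection probabilities lie in $[0,1]$, so pointwise $\P(x\leftrightarrow y\mid W)\,\P(x\leftrightarrow z\mid W)\le \P(x\leftrightarrow y\mid W)\wedge\P(x\leftrightarrow z\mid W)$; taking expectations gives
$\E\big[\P(x\leftrightarrow y\mid W)\,\P(x\leftrightarrow z\mid W)\big]\le\P(x\leftrightarrow y)\wedge\P(x\leftrightarrow z)\le c\,\big(\|x-y\|\vee\|x-z\|\big)^{-(\alpha\wedge\gamma)}(\log(\|x-y\|\vee\|x-z\|))^2$
by Lemma \ref{bosh}. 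Summing over $y,z$ at fixed $x$ then costs a factor $d$ (from the number of ways to realize the smaller distance), so the sum is $\lesssim\sum_{d\ge 1}d^{1-(\alpha\wedge\gamma)}(\log d)^2<\infty$ precisely because $\alpha\wedge\gamma>2$; note in particular that $\alpha>2$, not merely $\alpha>1$, is the correct requirement, in contrast to what your (infinite) bound suggested. With this one estimate replaced, the rest of your plan is sound.
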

\begin{proof}
 In the regime $\alpha>2$ and $\gamma>2$ we can use~\citet[Theorem~2.2]{deijfen} to infer that the node degrees have a variance which is bounded in $N$ and so, in particular, they have a bounded mean. In fact, it is easy to get convinced that both $\E[D_{G_N}]$ and $\Var(D_{G_N})$ are dominated by their infinite counterpart on $\Z$ since the random variables $\{\1{x\leftrightarrow y}:\,x,\,y\in {\T_N}\}$ are positively correlated. The result \eqref{Exp_Dtot} on the expectation of $D_{G_N}$ immediately follows.
  
  \smallskip
  
As for the variance, the first step is the simple observation that 
\begin{align}\label{part_long_2}
\E\big[D_{G_N}^2\big]
 	&=N \E\big[D_1^2\big]+\sum_{x\neq y\in \T_N}\E\big[D_x D_y\big]
 	=cN+\sum_{x\neq y\in \T_N}\E\big[D_x D_y\big]
\end{align}
for some constant $c>0$.
Note that here we use the fact that the degrees have bounded variance. Let us look, for $x\neq y$, at
\begin{align}
 \E\big[D_x D_y\big]
 	=&\,\E\Bigg[\Big(\1{x\leftrightarrow y}+\sum_{z\neq x,\,y}\1{x\leftrightarrow z}\Big)\Big(  \1{x\leftrightarrow y}+\sum_{w\neq x,\,y}\1{y\leftrightarrow w}\Big)\Bigg]\nonumber\\
 =&\sum_{z\neq x,\,y}\sum_{w\neq x,\,y,\,z}\P(x\leftrightarrow z)\P(y\leftrightarrow w)+\sum_{z\neq x,\,y}\E\left[\1{x\leftrightarrow z}\1{y\leftrightarrow z}\right]
 \nonumber\\
 &+\sum_{z\neq x,\,y}\E\left[\1{x\leftrightarrow y}\left(\1{x\leftrightarrow z}+\1{y\leftrightarrow z}\right)\right]+\E\big[\1{x\leftrightarrow y}^2\big]\,.\label{sound}
\end{align}
The first double sum in the last expression is upper bounded by
$
\E[D_x]\E[D_y]=\sum_{z\neq x}\P(x\leftrightarrow z)\sum_{w\neq y}\P(y\leftrightarrow w)$, and therefore
\begin{align}\label{mcostn}
 \E\big[D_x D_y\big]
 	\leq \E[D_1]^2 +\mathcal R_{x,y}
\end{align}
with $\mathcal R_{x,y}$ given by
\begin{align}\label{sum_R}
	\sum_{z\neq x,\,y}\E\left[\1{x\leftrightarrow z}\1{y\leftrightarrow z}\right]+\sum_{z\neq x,\,y}\E\left[\1{x\leftrightarrow y}\1{x\leftrightarrow z}\right]+\sum_{z\neq x,\,y}\E\left[\1{x\leftrightarrow y}\1{y\leftrightarrow z}\right]
	+\P(x\leftrightarrow y)\,.
\end{align}

We will now provide a bound for $\cR_{x,y}$. Using the inequality $\P(A\cap B)\le \P(A)\wedge \P(B)$ for any two events $A$ and $B$ and the fact that the weights have finite mean, we can see that
\begin{align*}
 \E\left[\1{x\leftrightarrow z}\1{y\leftrightarrow z}\right]
 	\le \P(x\leftrightarrow z)\wedge \P(y\leftrightarrow z)
 	\stackrel{\eqref{gpand}}{\le} 
 	\|x-z\|^{-2-\varepsilon}\wedge \|y-z\|^{-2-\varepsilon} 
\end{align*}
for some $\varepsilon>0$ such that $\gamma,\alpha>2+\varepsilon$.
Noticing that $\|x-z\|\vee \|y-z\|\geq \big\lceil{\|x-y\|}/{2}\big\rceil$ for all $z\in \T_N$, we bound
\begin{align}
\sum_{z\neq x,\,y}&\E\left[\1{x\leftrightarrow z}\1{y\leftrightarrow z}\right] 
	\le {2 \sum_{\ell=\big\lceil\frac{\|x-y\|}{2}\big\rceil}^\infty \ell^{-2-\varepsilon}
\leq 
c\,{\|x-y\|}^{-1-\varepsilon}}\label{lengthy_eq}
%
%&\le \frac{1}{\left(\left\lfloor\frac{\|x-y\|}{2}\right\rfloor\vee 1\right)^\alpha}+c \int_{\left\lfloor\frac{\|x-y\|}{2}\right\rfloor\vee 1}^\infty \frac{\dif t}{t^\alpha}=\frac{1}{\left(\left\lfloor\frac{\|x-y\|}{2}\right\rfloor\vee 1\right)^\alpha}+c\left(\left\lfloor\frac{\|x-y\|}{2}\right\rfloor\vee 1\right)^{-\alpha+1}.\label{lengthy_eq}
\end{align}
for some $c>0$.
Analogously, using that $\E[\1{x\leftrightarrow y}\1{x\leftrightarrow z}] \leq \P(x\leftrightarrow y)$ when $\|x-z\|\leq \|x-y\|$ while $\E[\1{x\leftrightarrow y}\1{x\leftrightarrow z}] \leq \P(x\leftrightarrow z)$ when $\|x-z\|\geq \|x-y\|$, we bound
\begin{align}\label{kib}
 \sum_{z\neq x,\,y}\E\left[\1{x\leftrightarrow y}\1{x\leftrightarrow z}\right] 
 	&\leq 2\|x-y\| \cdot\|x-y\|^{-2-\varepsilon} +2\sum_{\ell=\lceil \|x-y\|\rceil}\ell^{-2-\varepsilon}\nonumber\\
 	&\leq c\|x-y\|^{-1-\varepsilon}
% 	&\le {\P\left(x\leftrightarrow y\right)}^{1/2}\sum_{z\neq x}{\P\left(x\leftrightarrow z\right)}^{1/2}\stackrel{\eqref{exp_ineq}}{\le}c \|x-y\|^{-\alpha/2}\sum_{z\neq x}\|x-z\|^{-\alpha/2}\nonumber\\
% 	&\le c'\|x-y\|^{-\alpha/2}\label{final_bound_sum}
\end{align}
for some $c>0$. %In the last inequality we have used the fact that 
%$ \sum_{n=1}^\infty n^{-\alpha/2}<\infty$ when $\alpha>2$. 
We can bound similarly the third term of~\eqref{sum_R}. Again~\eqref{gpand} entails that the last term of~\eqref{sum_R} is upper-bounded by $\|x-y\|^{-2-\varepsilon}$. This estimate, \eqref{lengthy_eq} and \eqref{kib} plugged back into~\eqref{sum_R} yield $\cR_{x,y}\leq c\|x-y\|^{-1-\varepsilon}$ for some constant $c>0$.
This fact and \eqref{mcostn} bring to
\begin{align}\label{plo}
\sum_{x\neq y\in \T_N}\E[D_x D_y]
	\leq \sum_{x\neq y\in \T_N}\E[D_1]^2+\|x-y\|^{-1-\varepsilon}
	\leq N^2\E[D_1]^2 + c N\,.
\end{align}
for some $c>0$, where we have used the fact that $\sum_{n=1}^\infty n^{-1-\varepsilon}<\infty$. 

We can conclude with \eqref{part_long_2} and \eqref{plo} that, for some $c_1>0$,
\begin{align*}
\Var(D_{G_N})
	\leq N^2\E[D_1]^2 + c_1 N  - \E[D_{G_N}]^2
	= c_1N.  \tag*{\qedhere} 
\end{align*}
\end{proof}
For our purpose, not only do we need the mean and variance of the total degree, but also a concentration result.
\begin{lemma}\label{conc_Dtot}
For $\alpha>2, \,\gamma>2$ we have that, $\P$-a.s.~for all $N$ large enough,
\begin{align*}
\big|D_{G_N}-\E[D_{G_N}]\big|\leq  {N}\log N\,.
\end{align*}
%\[
% \P\left(\limsup_{N\to\infty}\left\{|\Dtot-\E[\Dtot]|\ge {N}\log N\right\}\right)=0.\]
\end{lemma}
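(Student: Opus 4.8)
The plan is to feed the first- and second-moment estimates of Lemma \ref{GRA} into Chebyshev's inequality and then close with a Borel--Cantelli argument; no additional probabilistic ingredient is required, since the work has essentially been done in Lemma \ref{GRA}.

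First I would apply Chebyshev's inequality together with the variance bound $\Var(D_{G_N})\leq cN$ from Lemma \ref{GRA}: for every $N$,
\begin{align*}
\P\big(|D_{G_N}-\E[D_{G_N}]|> N\log N\big)
\leq \frac{\Var(D_{G_N})}{(N\log N)^2}
\leq \frac{cN}{N^2(\log N)^2}
= \frac{c}{N(\log N)^2}\,.
\end{align*}

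Then I would note that $\sum_{N\geq 2}\frac{1}{N(\log N)^2}<\infty$ (by the integral test, $\int_2^\infty\frac{\dif x}{x(\log x)^2}<\infty$), so the tail probabilities above are summable in $N$. The first Borel--Cantelli lemma then guarantees that, $\P$-a.s., the event $\{|D_{G_N}-\E[D_{G_N}]|> N\log N\}$ occurs for only finitely many $N$, which is exactly the assertion: $\P$-a.s., for all $N$ large enough, $|D_{G_N}-\E[D_{G_N}]|\leq N\log N$.

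There is no genuine obstacle here. The only point that requires a little care is the choice of deviation scale: the standard deviation of $D_{G_N}$ is only of order $\sqrt N$, but we are allowed the much larger deviation $N\log N$, and this slack is precisely what turns the Chebyshev bound into a convergent series (a threshold of order $\sqrt N$ times a polylog would give tail probabilities of order $1/(\log N)^2$, which are not summable, so one would instead have to argue along a subsequence and interpolate). All the substantive estimates — $\E[D_{G_N}]\leq cN$ and $\Var(D_{G_N})\leq cN$, which rest on the comparison with the model on $\Z$ via positive correlations and on the linking-probability decay \eqref{gpand} — are already available from Lemma \ref{GRA}.
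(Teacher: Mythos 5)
Your argument is correct and coincides with the paper's own proof: Chebyshev's inequality with the bound $\Var(D_{G_N})\leq cN$ from Lemma~\ref{GRA}, followed by summability of $c/(N(\log N)^2)$ and the first Borel--Cantelli lemma. The extra remark on why the deviation scale $N\log N$ (rather than $\sqrt{N}$ times a polylog) is needed for summability is a nice observation, but otherwise the two proofs are the same.
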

\begin{proof}
 By Chebyshev's inequality
\begin{equation}\label{taac}
 \P\left(|D_{G_N}-\E[D_{G_N}]|\ge {N}\log N\right)\le \frac{\Var(D_{G_N})}{N^2(\log N)^2}\,.
\end{equation}
The statement is a consequence of Lemma~\ref{GRA} and the first Borel--Cantelli lemma.
\end{proof}

\begin{proof}[Proof of Proposition~\ref{prop:gammaalphagrande}]First of all, we recall the upper bound~\cite[Remark~10.17]{levinperes}
\begin{equation}\label{tmix_thit}
\tmix(G)\le 4 t_{\mathrm{hit}(G)}+1
\end{equation}
for any irreducible 
chain on a graph $G$, where $t_{\mathrm{hit}}(G)=\max_{x,\,y\in G}E_x[\tau_y]$ is the mean hitting time $\tau_y$ of $y$ of the chain starting at $x$. Using for example \citet[Proposition 10.7]{levinperes} one shows that the maximum hitting time for a graph with $N$ vertices and $M$ edges is at most of order $MN$. But by Lemmas~\ref{GRA} and~\ref{conc_Dtot} we know that $\P$--a.s.~for all $N$ large enough we have $D_{G_N}\leq 2N\log N$. Since $D_{G_N}$ is proportional to the number of edges, we obtain that there exists $c>0$ such that, $\P$-a.s.~for all $N$ large enough,
\[
 t_{\mathrm{hit}}(G_N) \leq cN^2\log N \,.
\]
This, together with \eqref{tmix_thit}, concludes the argument.
\end{proof}

\section{Lower bounds}\label{lowerbounds}
In this section we will prove a lower bound on the mixing time for all regimes. In Subsection \ref{llt} we deal with the case $\alpha\in (1,\,2),\, \tau>2$ and with the case $\tau\in(1,\,2),\,\gamma\in(1,\,2)$ at once. In Subsection \ref{ven} we treat the case $\gamma>2,\,\alpha>2$.

\subsection{Lower bound on {$\tmix(G_N)$} for case {$\alpha\in (1,\,2),\,\tau>2$} and case {$\tau\in(1,\,2),\,\gamma\in(1,\,2)$}}\label{llt}
We recall the desired result in the following proposition.
\begin{proposition}\label{boundary_boundal}
 Consider either the case $\alpha\in (1,\,2)$ and $\tau>2$ or the case $\tau\in(1,\,2)$ and $\gamma\in(1,\,2)$. Then it holds
 \begin{align*}
 	\P\big(\tmix(G_N)< (\log N)^{-3}N^{(\alpha\wedge \gamma)-1}\big)
 		\xrightarrow{N\to\infty}0\,.
 \end{align*}
\end{proposition}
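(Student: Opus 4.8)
The plan is to exhibit a single bottleneck set and feed it into the Cheeger-type lower bound \eqref{tmixcheeger}. Set $\beta:=\alpha\wedge\gamma$. In the first regime $\tau>2$ forces $\gamma=\alpha(\tau-1)>\alpha$, so $\beta=\alpha$; in the second regime $\tau<2$ forces $\gamma<\alpha$, so $\beta=\gamma$. In both cases $\beta\in(1,2)$, and Lemma~\ref{bosh} gives, for $\|x-y\|>1$,
\[
\P(x\leftrightarrow y)\;\le\; c\,\|x-y\|^{-\beta}\,(\log\|x-y\|)^{2},
\]
where the squared logarithm is actually present only when $\tau\le 2$ (for $\tau>2$ the bound is $c\|x-y\|^{-\alpha}$). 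Take as test arc $S:=\{1,\dots,\lfloor N/2\rfloor\}$, so that $S$ and $S^c$ are two torus-arcs, each of length at least $\lfloor N/2\rfloor$, separated by two cut points.

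The first step is to control the expected number of cut edges. Since there are exactly $N$ unordered pairs at each torus-distance $d\in\{1,\dots,\lfloor N/2\rfloor-1\}$ and a pair contributes to $D_{S,S^c}$ only if it straddles one of the two cuts, the number of crossing pairs at distance $d$ is $O(d)$ uniformly in $d\le\lfloor N/2\rfloor$. Using Lemma~\ref{bosh} (the $d=1$ term contributing only the two deterministic nearest-neighbour edges) one gets
\[
\E[D_{S,S^c}]\;\lesssim\;(\log N)^{2}\sum_{d=1}^{\lfloor N/2\rfloor} d^{\,1-\beta}\;\lesssim\;N^{2-\beta}(\log N)^{2},
\]
the sum being dominated by its last terms because $1-\beta>-1$. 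Markov's inequality with threshold $(\log N)^{1/2}$ then yields a constant $c'>0$ with
\[
\P\big(D_{S,S^c}> c'\,N^{2-\beta}(\log N)^{5/2}\big)\;\le\;\P\big(D_{S,S^c}>(\log N)^{1/2}\,\E[D_{S,S^c}]\big)\;\le\;(\log N)^{-1/2}\;\xrightarrow{N\to\infty}\;0 .
\]

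For the denominator we use that each vertex is joined to its two torus-neighbours, so $D_S\ge 2|S|\ge N-2$ and likewise $D_{S^c}\ge N-2$ deterministically. Let $T\in\{S,S^c\}$ be the one with $\pi_{G_N}(T)\le\tfrac12$ (at least one of them is); then $D_{T,T^c}=D_{S,S^c}$, so on the high-probability event above the bottleneck ratio \eqref{cmic} satisfies $\Phi(T)=D_{S,S^c}/D_T\le 2c'\,N^{1-\beta}(\log N)^{5/2}$ for $N$ large, hence $\Phi_*\le\Phi(T)$ by \eqref{cheeger}. Plugging this into the lower bound in \eqref{tmixcheeger} and using that $x\mapsto\frac{1-x}{2x}$ is decreasing and $\Phi(T)\to0$,
\[
\tmix(G_N)\;\ge\;\frac{1-\Phi_*}{2\Phi_*}\log 4\;\ge\;\frac{\log 4}{4\,\Phi(T)}\;\ge\;\frac{1}{8c'}\,N^{\beta-1}(\log N)^{-5/2}\;\ge\;N^{\beta-1}(\log N)^{-3}
\]
for all $N$ large, with probability tending to $1$; since $\beta-1=(\alpha\wedge\gamma)-1$, this is precisely the complement of the event in the statement. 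The argument requires essentially only the one quantitative input above, and the sole point needing care is the combinatorial count of crossing pairs at a fixed distance together with bookkeeping of the logarithmic corrections of Lemma~\ref{bosh} so as to reach exponent $-3$ (the slack in the Markov threshold makes this easy); one could also upgrade ``in probability'' to ``almost surely'' by replacing Markov's inequality with a Bernstein/Fuk--Nagaev bound on $D_{S,S^c}$ conditionally on the weights, but this is not needed here.
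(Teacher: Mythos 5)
Your proof is correct and follows essentially the same route as the paper: the test arc $S=\{1,\dots,\lfloor N/2\rfloor\}$, the bound $\E[D_{S,S^c}]\lesssim (\log N)^2 N^{2-(\alpha\wedge\gamma)}$ from Lemma~\ref{bosh}, Markov's inequality, and the Cheeger lower bound \eqref{tmixcheeger}. The only (minor, beneficial) difference is that you make explicit the selection of whichever of $S$ or $S^c$ has $\pi$--mass at most $1/2$ and the $O(d)$ count of crossing pairs at torus-distance $d$, both of which the paper leaves implicit.
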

\begin{proof}
Consider the set
\[
S:=\left\{1,\,\ldots,\,\left\lfloor N/2\right\rfloor\right\}\,.
\]
Using for example the approximation of sums by definite integrals, one can check that
\begin{align}\label{eslp}
\E[|\partial S|]
=\sum_{x\in S,\,y\notin S}\P(x\leftrightarrow y)
&\stackrel{\eqref{gpand}}{\leq} (\log N)^2
\sum_{x=1}^{\left\lfloor N/2\right\rfloor}\sum_{y=\left\lfloor N/2\right\rfloor+1}^N\|x-y\|^{-(\alpha\wedge \gamma)}\\
&\leq c(\log N)^2 N^{2-(\alpha\wedge \gamma)}
\end{align}
for some constant $c>0$  depending on $\alpha$ and $\tau$.
Recall the definitions of the bottleneck ratio \eqref{cmic} and of the Cheeger constant $\Phi_\ast$ in \eqref{cheeger} and its relation with the mixing time in \eqref{tmixcheeger}. Using \eqref{eslp} and Markov's inequality we obtain, with $c>0$ a constant depending on $\alpha$ and $\tau$ that might change from line to line,
\begin{align*}
\P(\tmix(G_N)< (\log N)^{-3}N^{(\alpha\wedge \gamma)-1})
	%&\leq \P(\frac{1-\Phi_\ast}{2\Phi_\ast} < (\log N)^{-3}N^{(\alpha\wedge \gamma)-1})\\
	&\leq \P(\Phi_\ast> c(\log N)^3 N^{1-(\alpha\wedge \gamma)})\\
	&\leq \P(|\partial S|>c(\log N)^3 N^{2-(\alpha\wedge \gamma)})\\
	&\leq c\E[|\partial S|](\log N)^{-3} N^{-2+(\alpha\wedge \gamma)})\xrightarrow{N\to\infty}0\,.
\end{align*}
\end{proof}
\subsection{Lower bound on {$\tmix(G_N)$} for case {$\gamma>2,\,\alpha>2$}}\label{ven}
For a moment, let us consider $\S_N=\{1,2,\dots, N\}$ as a segment rather than a torus, so that in particular the distance between $x,y\in \S_N$ is just $|x-y|$ rather than the torus distance $\|x-y\|$. For a graph $G$ on $\S_N$, we say that a point $x\in\S_N$ is a {\it cut-point} for $G$ if there is no edge between a point in $\{1,2,\dots,x-1\}$ and a point in $\{x+1,\dots,N\}$. We say that $x\in \S$ is a {\it good cut-point} if $x-1$, $x$ and $x+1$ are cut-points. 

\smallskip

Consider now a SFP random graph $G_N(\S)$ on $\S_N$, that is, the model described in Section \ref{shesshion} and with link probability given by equation \eqref{eq:con_proba} with $|x-y|$ replacing $\|x-y\|$.
In the following lemma, we will show there is a positive density of good cut-points for $N$ large.
\begin{lemma}\label{lem:good_cut}
 Consider a SFP random graph $G_N(\S_N)$ on the segment $\S_N$. There exists $c>0$ such that 
 \[
 \lim_{N\to\infty}\P\big(\left|\left\{\text{Good cut-points in }\S_N\right\}\right|\ge cN\big)=1\,.
 \]
 %the following holds: for every $\epsilon>0$, for all $N$ large enough  
%\[
%  \P\left(\left|\left\{\text{good cut points in }\S_N\right\}\right|\ge cN\right)\ge 1-\epsilon\,.
% \]
\end{lemma}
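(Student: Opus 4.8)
The plan is to show that a single fixed point $x$ is a cut-point with probability bounded below by a positive constant, and then to exploit a suitable independence (or weak dependence) between distant candidate points in order to turn this one-point estimate into a statement about the linear density of good cut-points via a second-moment argument. First I would estimate, for a fixed $x$ away from the boundary (say $N/4 \le x \le 3N/4$), the probability that there is no edge joining $\{1,\dots,x-1\}$ to $\{x+1,\dots,N\}$. Writing this event as $\bigcap_{u<x<v}\{u\not\leftrightarrow v\}$ and using independence of the edges given the weights together with the bound $1-\e^{-t}\le t$, the conditional probability of the complement is at most $\sum_{u<x}\sum_{v>x} W_u W_v |u-v|^{-\alpha}$. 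Since $\gamma>2$ (so $\tau>2$ in particular, hence $\E[W]<\infty$) and $\alpha>2$, Lemma \ref{bosh} gives $\P(u\leftrightarrow v)\le c\|u-v\|^{-\alpha}$ with $\alpha>2$, so taking expectations $\E\big[\sum_{u<x<v}\mathbbm 1_{u\leftrightarrow v}\big]\le c\sum_{u<x<v}|u-v|^{-\alpha}$, and because $\alpha>2$ this double sum is bounded by a constant $c_0$ uniformly in $x$ and $N$ (the sum over $k=v-u$ of $k\cdot k^{-\alpha}$ converges). By Markov, $\P(x\text{ is not a cut-point})\le c_0$; this is not yet $<1$, but the tail can be controlled more carefully — one splits the range into ``short'' bonds $|u-v|\le \ell$ and ``long'' bonds, chooses $\ell$ large, and observes that the expected number of long crossing bonds is $o(1)$ while short crossing bonds only involve the weights near $x$; conditioning on those weights being of order $1$ (an event of probability bounded below) makes the short-range crossing probability small as well. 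This yields a constant $p_0>0$ with $\P(x\text{ is a good cut-point})\ge p_0$ for all $x$ in the bulk and all $N$ large, since being a good cut-point is the intersection of three such events on consecutive points and the same estimate applies.

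Next I would set $Z_N:=|\{\text{good cut-points in }\S_N\}|=\sum_{x}\mathbbm 1_{A_x}$ with $A_x$ the event ``$x$ is a good cut-point'', so $\E[Z_N]\ge p_0 N/2$ for $N$ large. For the second moment, $\E[Z_N^2]=\sum_{x,y}\P(A_x\cap A_y)$, and the key point is that $A_x$ and $A_y$ for $|x-y|$ large are only weakly dependent: the only bonds relevant to both events are those crossing the gap between $x$ and $y$, and the expected number of such ``doubly relevant'' bonds is small. More precisely, for $|x-y|\ge 2$ the events $A_x$ and $A_y$ are \emph{positively correlated} is false in general, but one can write $\P(A_x\cap A_y)\le \P(A_x)\P(A_y)+ \text{(correction)}$ where the correction accounts for the bonds straddling both $x$ and $y$; decomposing the edge set into the part ``left of $x$'', ``between $x$ and $y$'', ``right of $y$'' shows that conditionally on the weights the events factor up to the contribution of bonds that jump over the interval $[x,y]$, whose expected number is $\sum_{u<x}\sum_{v>y}c|u-v|^{-\alpha}\le c'|x-y|^{2-\alpha}$, summable in $|x-y|$ because $\alpha>2$. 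Hence $\sum_{x\ne y}(\P(A_x\cap A_y)-\P(A_x)\P(A_y))\le c'' N$, so $\Var(Z_N)\le c'' N + \E[Z_N] = O(N)$, and Chebyshev gives $\P(Z_N < \tfrac12\E[Z_N])\le \Var(Z_N)/(\tfrac12\E[Z_N])^2 = O(1/N)\to 0$. Taking $c=p_0/4$ finishes the proof.

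The main obstacle I expect is twofold. First, upgrading the crude Markov bound $\P(x\text{ not a cut-point})\le c_0$ (with $c_0$ possibly $\ge 1$) into an honest lower bound $\P(A_x)\ge p_0>0$: this requires a genuine, not merely first-moment, argument, controlling the probability that \emph{no} crossing bond is present. The cleanest route is probably to condition on $\mathcal F_x:=\sigma(W_u: |u-x|\le \ell_0)$ for a fixed window, note that the weights outside contribute an expected number of crossing bonds that is $o(1)$ uniformly (by $\alpha>2$ and $\E[W]<\infty$, using a truncation to handle the heavy tail of $W$ when $\tau$ is close to $2$), and then on the event that all weights in the window are $\le$ some constant $\Lambda$ — which has probability bounded below — the remaining short-range crossing probability is at most $\sum_{u,v\text{ near }x} \Lambda^2 |u-v|^{-\alpha}$, which is $<1/2$ for $\Lambda$ fixed and $\ell_0$ chosen appropriately; then $\P(A_x)\ge \P(\text{all nearby }W\le\Lambda)\cdot\tfrac12 - o(1) \ge p_0$. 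Second, making the ``factorization up to straddling bonds'' rigorous for $\P(A_x\cap A_y)$ requires a careful bookkeeping of which edge-indicators are shared; the honest statement is $\P(A_x\cap A_y\mid \{W_z\}) = \P(A_x\mid\{W_z\})\P(A_y\mid\{W_z\})\cdot(1-\text{straddle term})^{-1}$-type manipulation, or more simply the bound $\P(A_x\cap A_y)\le \P(A_x)\,\P\big(A_y \,\big|\, \text{no bond straddles }[x,y]\big)\le \P(A_x)\big(\P(A_y)+\P(\exists\text{ straddling bond})\big)$, and one checks $\P(\exists\text{ straddling bond})\le \E[\#\text{straddling bonds}]\le c'|x-y|^{2-\alpha}$. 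Everything else — the convergence of the relevant sums, the truncation of the Pareto weights — is routine given $\alpha>2$ and $\gamma>2$.
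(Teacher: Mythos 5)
Your plan is sound in outline but takes a genuinely different route from the paper, and one step of your second-moment argument has a gap worth flagging.

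\smallskip
\emph{Comparison of routes.}
The paper's proof is considerably slicker on both halves. For the one-point lower bound it does not truncate at all: conditionally on the weights, the probability that $x_0,x_0\pm1$ are all cut-points is \emph{exactly} a product of $\e^{-W_xW_y\|x-y\|^{-\alpha}}$, i.e.\ $\e^{-\Sigma}$ for a nonnegative weight-functional $\Sigma$, and a single application of Jensen's inequality gives $\E[\e^{-\Sigma}]\ge \e^{-\E[\Sigma]}=\e^{-\E[W]^2\sum\|x-y\|^{-\alpha}}>0$ since $\alpha>2$. For the density statement the paper then works on the full SFP graph on $\Z$ (a shift-invariant, ergodic measure), invokes the pointwise ergodic theorem to get $M_N/N\to\psi>0$ a.s., and passes to the segment by noting that a cut-point in $G(\Z)$ remains one in the restriction to $\S_N$. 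No covariance computation is needed. Your route (condition locally, truncate the weights, then second moment + Chebyshev) is a perfectly legitimate alternative strategy and would also extend to models without an exact exponential form, but it is substantially heavier.

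\smallskip
\emph{The gap.}
Your covariance estimate is not established by the inequality chain you write. You claim $\P(A_x\cap A_y)\le \P(A_x)\,\P(A_y\mid\text{no straddle})$; but what one actually has from $A_x\subseteq\{\text{no straddle}\}$ is $\P(A_x\cap A_y)\le\P(\{\text{no straddle}\}\cap A_y)=\P(\text{no straddle})\P(A_y\mid\text{no straddle})$, and $\P(\text{no straddle})\ge\P(A_x)$, which goes the wrong way. More importantly, $A_x$ and $A_y$ are both \emph{decreasing} events in the edge indicators and in the weights, so FKG makes them positively correlated, and this positive correlation has two sources: the straddling edges you do account for, and the \emph{shared weights} of sites between $x$ and $y$ (which enter both $\P(A_x\mid W)$ and $\P(A_y\mid W)$), which you do not. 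Counting straddling bonds alone does not control $\mathrm{Cov}(\P(A_x\mid W),\P(A_y\mid W))$. The standard repair is a localization: define $\hat A_x$ by demanding the absence only of crossing edges both of whose endpoints lie within distance $\lfloor|x-y|/3\rfloor$ of $x$ (so $\hat A_x$ is measurable w.r.t.\ a local block of weights and edge variables and $\hat A_x\supseteq A_x$), note that $\hat A_x$ and $\hat A_y$ are then genuinely independent, and bound $\P(\hat A_x\setminus A_x)$ by the expected number of ``long'' crossing edges, which is $O(|x-y|^{2-\alpha})$ by $\alpha>2$. Then $\P(A_x\cap A_y)\le\P(\hat A_x)\P(\hat A_y)\le(\P(A_x)+O(|x-y|^{2-\alpha}))(\P(A_y)+O(|x-y|^{2-\alpha}))$, and the covariance is summable as you want. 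Your sketch contains the right intuition (``decompose the edges into left of $x$, between, right of $y$''), but it omits the weight-sharing issue, and the explicit inequality you write to capture it is not correct.

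\smallskip
\emph{A smaller point.}
The claim ``$\gamma>2$ (so $\tau>2$ in particular)'' is false: with $\alpha>2$ and $\gamma=\alpha(\tau-1)>2$ one can have $\tau\in(1,2]$ (e.g.\ $\alpha=10$, $\tau=1.3$), in which case $\E[W]=\infty$. Then both your truncation argument and the paper's Jensen computation need an extra truncation of $W$ (which works because $\P(\text{no crossing edge}\mid W)$ is a.s.\ strictly positive and bounded, so its expectation is still positive). This does not break the lemma, but the stated implication is wrong and should not be used to justify $\E[W]<\infty$.
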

\begin{proof}
We want to make use of the ergodic theorem, so we start to investigate the infinite SFP random graph on $\Z$. 
The infinite graph $G_N(\Z)$ can be constructed under the measure $\mu:=\bigotimes_{x\in\Z} Law(W_x)\otimes \bigotimes_{x,\,y\in \Z}Law(U_{x,\,y})$, where $U_{x,\,y}$ are i.i.d.~uniform random variables on $[0,1]$. To do so, first sample the weight $W_x$ of each point in $\Z$, then connect each couple of points $x$ and $y$ with $|x-y|>1$ with an edge if $U_{x,y}\leq \P(x\leftrightarrow y\,|\,W_x,W_y)$ (cfr.~\eqref{eq:con_proba} with $|\cdot|$ replacing $\|\cdot\|$) and connect all nearest neighbours as usual. The definition of cut-point and of good cut-point on the infinite graph are just the same we gave for the segment before the lemma. We compute, using Jensen's inequality,
\begin{align*}
\mu\left(x_0\text{ is a good cut-point of $G_N(\Z)$}\right)
&=\mu(x_0,\,x_0-1,\,x_0+1\text{ are cut-points in $G_N(\Z)$})\\
&=\mathbb{E}_\mu\Big[\prod_{x\le x_0-1\atop y\ge x_0+1}\e^{-\frac{W_x W_y}{\|x-y\|^\alpha}}\Big]%=\mathbb{E}\left[E\left[\exp\left(-\sum_{x\le x_0-1\atop y\ge x_0+1}\frac{W_x W_y}{\|x-y\|^\alpha}\right)|(W_z)_{z\in V_N}\right]\right]\\
%&\ge \mathbb{E}\left[\exp\left(-\sum_{x\le x_0-1\atop y\ge x_0+1}\frac{W_x W_y}{\|x-y\|^\alpha}\right)\right]
\ge \exp\Big(-\sum_{x\le x_0-1\atop y\ge x_0+1}\frac{\E[W_0]^2 }{\|x-y\|^\alpha}\Big)\,.
\end{align*}
Note that, since $\alpha>2$,
\begin{equation}\label{eq:alpha_summable}
\sum_{x\le x_0-1\atop y\ge x_0+1}{\|x-y\|^{-\alpha}}
	<\infty\,,
\end{equation}
from which we deduce that
\[
\mu\left(x_0\text{ is a good cut-point of $G_N(\Z)$}\right)=:\psi(\alpha,\tau)>0\,.
\]
Measure $\mu$ is invariant under the shift $x\mapsto x+1$ in $\Z$. Call $M_N$ the number of good cut-points in $\{1,2,\dots,N\}$ in $G_N(\Z)$. 
By the ergodic theorem
\[
\lim_{N\to\infty}\1{\tfrac{M_N}N\ge \tfrac{\psi(\alpha,\,\tau)}{2}}=1 \quad \mu-a.s.
\]
Therefore 
\begin{align}\label{liliac}
 \lim_{N\to\infty}\mu\left(\frac{M_N}{N}\ge \frac{\psi(\alpha,\,\tau)}{2}\right)=1
\end{align}
by the dominated convergence theorem. 

We observe now that an instance of scale-free percolation on $\S_N$ can be obtained as follows: construct $G_N(\Z)$ and then restrict it to the vertices in $\S_N$. Using this construction we see that if a point is a good cut-point in the infinite graph, then it is so also on its restriction to $\S_N$.
The lemma follows then automatically from \eqref{liliac}.
%Set $c:=\psi(\alpha,\,\tau)/12.$ Hence we can almost-surely find that for $N$ large with arbitrarily large probability one has ${\rm diam}(G_N)\ge 6 c N.$
\end{proof}
\begin{remark}
	From Lemma \ref{lem:good_cut} it is possible to deduce that, in the regime where $\alpha>2$ and $\gamma>2$, there exists $c>0$ such that
	\[
	\lim_{N\to\infty}\P\left({\rm diam}(G_N)\le cN\right)=0\,,
	\]
	that is, the diameter of $G_N$ is linear with probability tending to $1$.
\end{remark}
We are now ready to prove the lower bound for the regime under consideration.
\begin{proposition}\label{prop:LBag22}
Let $\alpha>2$ and $\gamma>2$. There exists $c>0$ such that 
\[
\lim_{N\to\infty}\P(\tmix(G_N)\ge c N^2)=1\,.
\]
\end{proposition}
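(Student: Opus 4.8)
The plan is to use the electrical network interpretation of the simple random walk, exactly as in \cite{benjaberger}. The idea is that a good cut-point $x$ (in the sense of Lemma~\ref{lem:good_cut}) forces the walk to pass through the single ``bridge'' edge $\{x-1,x\}$ (or $\{x,x+1\}$) in order to cross from the left part of the graph to the right part, and a positive density of such cut-points means the walk behaves, at a coarse scale, like a walk on a segment of length proportional to $N$, whose mixing time is of order (length)$^2 \asymp N^2$.

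First I would transfer from the torus $\T_N$ to the segment $\S_N$: the torus graph $G_N$ contains, after deleting the single edge $\{N,1\}$ and the long edges wrapping around, a copy of the segment model $G_N(\S_N)$, and for a lower bound on the mixing time one can work with a test set on the torus whose boundary is controlled by cut-point structure on a segment of length $\sim N/2$. Concretely, pick the arc $S = \{1,\dots,\lfloor N/2\rfloor\}$ and use Lemma~\ref{lem:good_cut} applied to the induced segment graph: with probability tending to $1$ there are at least $cN$ good cut-points inside, say, $\{N/4,\dots,N/2\}$. Label them $x_1 < x_2 < \dots < x_m$ with $m \gtrsim N$.

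Next I would use the commute-time identity: for the lazy simple random walk, $\E_a[\tau_b] + \E_b[\tau_a] = 2|E|\,\mathcal R_{\mathrm{eff}}(a \leftrightarrow b)$, where $\mathcal R_{\mathrm{eff}}$ is the effective resistance when each edge $\{u,v\}$ is given unit conductance (so resistance scaled appropriately for the lazy chain; the laziness only changes things by a factor $2$). Take $a$ to the left of all the cut-points and $b$ to the right. Every good cut-point $x_i$ yields a cut-edge that every path from $a$ to $b$ must traverse, and distinct good cut-points (spaced at least $3$ apart by definition, but in fact we can thin them to be spaced so the cut-edges are distinct and the cuts are ``parallel'' obstacles in series) give cut-edges in series. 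By the series law, $\mathcal R_{\mathrm{eff}}(a\leftrightarrow b) \ge \sum_i (\text{resistance of the }i\text{-th bridge edge}) = m \gtrsim N$, since each bridge edge has unit resistance. Meanwhile $|E| = \tfrac12 D_{G_N} \lesssim N\log N$ by Lemmas~\ref{GRA} and~\ref{conc_Dtot}. Hmm — this would give commute time $\lesssim N^2 \log N$ as an \emph{upper} bound, not a lower bound, so the resistance-in-series argument alone overshoots in the wrong direction; what we actually need is the lower bound $\mathcal R_{\mathrm{eff}} \gtrsim N$ combined with $|E| \gtrsim N$ (the trivial lower bound on edges), giving commute time $\gtrsim N^2$, hence $\max_{x,y} \E_x[\tau_y] \gtrsim N^2$, and then one converts a large hitting time into a mixing-time lower bound via $t_{\mathrm{hit}}(G_N) \le c\, \tmix(G_N)$ up to constants (this is the standard bound, e.g.\ $\tmix \gtrsim$ a fraction of $\max_x \E_\pi[\tau_x]$, or one may invoke the relation between hitting times and the spectral gap noted in Remark~\ref{yri}).

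Alternatively, and perhaps more cleanly, I would argue directly with a bottleneck/test-set argument made quantitative by the cut-point geometry: the chain started at a point deep inside the left block $\{1,\dots,x_1\}$ needs time of order (distance to the middle)$^2 \asymp N^2$ just to hit the first cut-point, because the restriction of the chain to the sub-segment between two consecutive cut-points is itself a (lazy, weighted) walk on a graph of diameter $\ge$ the spacing, and the effective-resistance-to-the-boundary is at least the number of bridge edges crossed, which is linear. Packaging this: let $A$ be the set of vertices lying to the left of the middle good cut-point $x_{m/2}$; then $\pi(A)$ is bounded away from $0$ and $1$ (degrees are bounded in expectation and concentrated, so each block carries comparable $\pi$-mass), while the conductance $\Phi(A) = D_{A,A^c}/D_A$ is at most $O(1/N)$ because $D_{A,A^c}$ counts edges crossing $x_{m/2}$, which by the cut-point property is just the two nearest-neighbour bridge edges, an $O(1)$ quantity, against $D_A \gtrsim N$. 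Then $\tmix(G_N) \ge \tfrac{1-\Phi_*}{2\Phi_*}\log 4 \gtrsim N$ from~\eqref{tmixcheeger} — but this only gives $N$, not $N^2$! So the plain Cheeger bound is too weak, and one genuinely needs the resistance/commute-time route (or the ``evolving set''/return-probability refinement) to capture the diffusive $N^2$; the honest argument is the commute-time identity $t_{\mathrm{comm}}(a,b) = 2|E|\,\mathcal R_{\mathrm{eff}}(a\leftrightarrow b) \gtrsim N \cdot N = N^2$ together with the standard fact $\tmix(G_N) \ge c_0\, t_{\mathrm{hit}}(G_N) \ge c_0\, \tfrac12 t_{\mathrm{comm}}(a,b)$ for a suitable universal $c_0>0$ (valid because $t_{\mathrm{hit}} \le 4\,\tmix$ from~\eqref{tmix_thit} run in reverse gives $\tmix \gtrsim t_{\mathrm{hit}}$ only up to the wrong direction — so instead I would cite the known inequality $t_{\mathrm{hit}} \le (2\tmix+1)/\ldots$; more safely, use Aldous--Fill / \citet[Ch.~10--12]{levinperes}: $\max_{x}\E_\pi[\tau_x] \le c\,\tmix$, and $\max_x \E_\pi[\tau_x] \ge \tfrac12 \pi(b)\, t_{\mathrm{comm}}(a,b)$ for well-chosen $a,b$).

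\textbf{Main obstacle.} The crux — and the step I expect to be most delicate — is extracting the \emph{diffusive} $N^2$ rather than merely the ballistic $N$: a single bottleneck gives only $\tmix \gtrsim N$, so one must exploit that there are $\asymp N$ bottlenecks \emph{in series}. The clean way is the effective-resistance lower bound $\mathcal R_{\mathrm{eff}} \gtrsim N$ via the series law over the linearly many unit-resistance bridge edges, converted to a hitting-time lower bound through the commute-time identity (keeping careful track of the laziness factor and of the fact that $|E(G_N)| \gtrsim N$ always holds, which is what makes $2|E|\,\mathcal R_{\mathrm{eff}} \gtrsim N^2$), and finally to a mixing-time lower bound via the standard comparison $\tmix(G_N) \gtrsim \max_{x,y}\E_x[\tau_y]$ up to universal constants (\citet[Chapter~10--12]{levinperes}). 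The remaining bookkeeping — that the good cut-points from Lemma~\ref{lem:good_cut} on the segment really do survive as cut-edges after passing to the torus, and that the $\pi$-mass is spread out enough that the chosen endpoints $a,b$ have $\pi(a),\pi(b)$ bounded below — is routine given Lemmas~\ref{GRA},~\ref{conc_Dtot} and~\ref{lem:good_cut}.
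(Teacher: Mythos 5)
Your plan correctly identifies the two main ingredients — the positive density of good cut-points from Lemma~\ref{lem:good_cut}, and a series-resistance/electrical-network argument giving an effective-resistance (or voltage) lower bound linear in $N$ — and these are indeed the engine of the paper's proof. But there is a genuine gap precisely at the step you flag as ``most delicate'': the conversion from a hitting-time lower bound to a mixing-time lower bound.

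You lean on inequalities of the form $\tmix(G_N) \gtrsim \max_{x,y}\E_x[\tau_y]$ or $\max_x \E_\pi[\tau_x] \le c\,\tmix$. Neither of these is true in general. The relation in the paper, \eqref{tmix_thit}, reads $\tmix(G) \le 4\,t_{\mathrm{hit}}(G)+1$: it bounds mixing \emph{above} by the maximal point-to-point hitting time, not below. There is no reverse inequality for single targets, and you cannot repair this by averaging over the start with $\pi$ either; the lazy walk on the complete graph has $\tmix=O(1)$ but $\E_\pi[\tau_x]\asymp N$ for every $x$. What \emph{is} true — and is the key structural point you are missing — is that the hitting time of a \emph{set of stationary measure bounded away from zero} is controlled by the mixing time. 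The paper sets this up from scratch: it chooses arcs $A,B,C$, rotates so that $\pi(A\cup B)\ge 3/4$, defines $T$ as the hitting time of $A\cup B$ from a point $u$ far from it, and then observes directly from the definition of total-variation mixing that for any $n\ge\tmix$ one has $\pp_x(X_n\in A\cup B)\ge 1/4$, hence $\pp_x(T>s)\le (3/4)^{s/\tmix-1}$ and $\E_x[T]\le c\,\tmix(G_N)$. This $\pi$-mass condition is not cosmetic; it is exactly what makes the hitting-time-to-mixing-time transfer legitimate, and it is the step your sketch replaces with an incorrect comparison.

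Once the correct set-up is in place, the paper's electrical argument is close to yours in spirit but is phrased via voltages rather than the commute-time identity: grounding $A\cup B$, putting unit current into $u=7N/8$, and noting that at least half the current passes each good cut-point on one side, the voltage at the $i$-th such cut-point is at least $i/2$, so $\E_u[T]=\sum_x D_x v(x)\ge\sum_{i\le cN} i \ge c_1 N^2$. Your commute-time route could also be made to work, but note that the commute-time identity $\E_a[\tau_b]+\E_b[\tau_a]=2|E|\,\mathcal R_{\mathrm{eff}}(a,b)$ gives you a bound on a point-to-point hitting time, and you would then be stuck at exactly the same bad transfer; the way out is to lower-bound $\E_u[\tau_{A\cup B}]$ (hitting a large-measure set from $u$), which the voltage formulation does naturally. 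In short: keep your cut-point and series-resistance intuition, but target a set of $\pi$-measure $\ge 3/4$ rather than a point, and derive $\E_u[T]\le c\,\tmix$ directly from the definition of total-variation distance instead of invoking a nonexistent $\tmix\gtrsim t_{\mathrm{hit}}$ inequality.
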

\begin{proof}[Sketch of the proof]
The proof follows the argument in~\citet[Proposition 4.1]{BBY08}, and for  completeness we will sketch here the main points. % 
For simplicity we assume $N$ to be divisible by $8$. We partition $\T_N$ into three sets:
\[
 A:=\left\{1,\,2,\,\ldots,\,\tfrac{N}{2}\right\},\;B:=\left\{\tfrac{N}{2}+1,\,\ldots,\,\tfrac{3N}{4}\right\},\;C:=\left\{\tfrac{3N}{4}+1,\,\ldots,\,N\right\}.
\]
Also call
\[
 K_i:=\left\{(i-1)\tfrac{N}{8}+1,\,\ldots,\,i\tfrac{N}{8}\right\} \qquad i=1,\,2,\,\ldots,\,8.
\]
By Lemma~\ref{lem:good_cut} we can assume that there exists a constant $c>0$ for which, with probability arbitrarily close to 1,  $G_N$ restricted to the segment $K_i$ contains at least $c N$ good cut-points. We can also assume without loss of generality (eventually rotating the cycle) that $\pi(A)\ge \pi(B\cup C)$, $\pi(B)\ge \pi(C)$. This entails that $\pi(A\cup B)\ge 3/4$. For $x\notin A\cup B$, let $T$ be the hitting time of $A\cup B$ for the simple random walk $(X_n)_{n\in \N_0}$ on $G_N$ starting at $x$. We denote its law (given the realization of the graph $G_N$) as $\pp_x$ and the relative expectation as $\ep_x$. For $n\ge \tmix(G_N)$ one obtains
\[
 \frac34-\pp_x\left(S_n\in A\cup B\right)\le\sum_{y\in A\cup B}\left|\pi(y)-P^n(x,\,y)\right|\le 2 \|\pi-P^n(x,\,\cdot)\|_{ {}_{\rm TV}}\le {\frac{1}{2}}.
\]
Hence, for any $x\notin A\cup B$ and $n\ge \tmix(G_N)$, one has $\pp_x(T\le n)\geq \pp_x(S_n\in A\cup B)\geq 1/4$. It follows that for any real $s\geq 0$ one has $\pp_x(T>s)\le (3/4)^{s/\tmix(G_N)-1}$ and thus there exists $c>0$ independent of $N$ such that
\begin{equation}\label{eq:large_hit}
  E_x[T]\le c\tmix(G_N).
 \end{equation}
Call $u:=7N/8$. Using the language of electrical networks~\cite[Chapter 9]{levinperes} we ground the set $A\cup B$ and set a potential in $u$ so that there is a unit of current flowing from $A\cup B$ to $u$. Let $x_1,\,\ldots,\,x_{cN}$ be the good cut points on the side of $u$ with at least $1/2$ the current. In particular, each of them will be crossed by at least half the current. In turn, using the relation between voltage and resistance and that the number of cut edges (resistors of resistance 1 connected in series) is at least the number of good cut points, the voltage $v(x_i)$ at $x_i$ is at least $i/2$. Hence
\[
 \ep_u[T]
 	\ge \sum_{x\notin A\cup B\atop x\text{ good cut point}} \ep_u\Big[\sum_{i=0}^T\1{S_i=x}\Big]
 	=\sum_{x\notin A\cup B\atop x\text{ good cut-point}} D_x\, v(x)
 	\ge \sum_{i=1}^{cN}i=c_1 N^2
\]
for some $c_1>0$.
By~\eqref{eq:large_hit} we derive that there exists a $c>0$ such that $\tmix\ge c N^2,$ thus giving the desired result.
\end{proof}

%%%%%%%%%  %%%%%%%%%%%%%%%%5
%%%%%% APPENDICE
%%%%%%%% %%%%%%%%%%%%%%%%%%%%%%
%\newpage
\appendix
\section{}

In this first part of the appendix we recall some well-known inequalities and also prove a concentration inequality that we will use in different parts of the paper.

\begin{lemma}[Bernstein inequality]\label{lemmabernstein}
	Let $X_i,\,i=1,\,\ldots,\,N$, be independent centered random variables with $|X_i|\le M$ for all $i$ a.s., and define $\sigma^2:=\sum_{i=1}^N \Var(X_i)/N$. For all $u>0$
	\begin{equation}\label{bernstein}
	\P\Big(\sum_{i=1}^N X_i \ge u\Big)
		\le\exp\Big(-\frac{u^2}{2(N\sigma^2+Mu/3)}\Big)\,.
	\end{equation}
\end{lemma}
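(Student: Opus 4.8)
The plan is to run the classical exponential-moment (Chernoff) argument, which is standard but which we record for completeness. Write $S:=\sum_{i=1}^N X_i$ and note that if $\sigma^2=0$ then each $X_i=0$ a.s.\ and there is nothing to prove, so we may assume $N\sigma^2>0$. For any $\lambda>0$, Markov's inequality applied to $\e^{\lambda S}$ together with independence gives
\[
\P(S\ge u)\le \e^{-\lambda u}\,\EE{\e^{\lambda S}}=\e^{-\lambda u}\prod_{i=1}^N \EE{\e^{\lambda X_i}}\,.
\]
Thus everything reduces to a bound on the moment generating function of each $X_i$, followed by an optimisation over $\lambda$.

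First I would bound $\EE{\e^{\lambda X_i}}$. Since $\EE{X_i}=0$ and $|X_i|\le M$ a.s., expanding the exponential series and using $|\EE{X_i^k}|\le M^{k-2}\Var(X_i)$ for $k\ge 2$ together with the elementary inequality $k!\ge 2\cdot 3^{k-2}$ yields, for every $\lambda$ with $0<\lambda<3/M$,
\[
\EE{\e^{\lambda X_i}}\le 1+\frac{\lambda^2\Var(X_i)/2}{1-\lambda M/3}\le \exp\!\left(\frac{\lambda^2\Var(X_i)/2}{1-\lambda M/3}\right),
\]
where the last step is \eqref{exp_ineq}. Taking the product over $i$ and recalling $N\sigma^2=\sum_{i=1}^N\Var(X_i)$ gives $\EE{\e^{\lambda S}}\le \exp\big(\tfrac{\lambda^2 N\sigma^2/2}{1-\lambda M/3}\big)$, hence
\[
\P(S\ge u)\le \exp\!\left(-\lambda u+\frac{\lambda^2 N\sigma^2/2}{1-\lambda M/3}\right),\qquad 0<\lambda<\tfrac{3}{M}\,.
\]

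Finally I would make the (near-)optimal choice $\lambda=\dfrac{u}{N\sigma^2+Mu/3}$, which indeed lies in $(0,3/M)$ because $\lambda M/3=\dfrac{Mu/3}{N\sigma^2+Mu/3}<1$. A short computation shows that with this choice $1-\lambda M/3=\dfrac{N\sigma^2}{N\sigma^2+Mu/3}$, so the second term in the exponent collapses to $\lambda u/2$ and the whole exponent becomes $-\lambda u/2=-\dfrac{u^2}{2(N\sigma^2+Mu/3)}$, which is exactly the claimed inequality \eqref{bernstein}. There is no genuine obstacle here: the statement is a textbook concentration bound and every step is routine. The only mildly delicate point is the power-series estimate for $\EE{\e^{\lambda X_i}}$, where one must track the constraint $\lambda M<3$ — this is precisely the origin of the $Mu/3$ correction — and then check that the chosen $\lambda$ is admissible and produces the stated constant. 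Alternatively one could simply quote a standard reference (e.g.\ Boucheron--Lugosi--Massart) in place of this computation.
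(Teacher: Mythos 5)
Your derivation is correct: the MGF bound via the power-series expansion with $|\E[X_i^k]|\le M^{k-2}\Var(X_i)$, the inequality $k!\ge 2\cdot 3^{k-2}$, the use of \eqref{exp_ineq}, and the explicit choice $\lambda=u/(N\sigma^2+Mu/3)$ all check out and yield \eqref{bernstein} exactly. The paper itself does not prove this lemma — it records Bernstein's inequality as a classical fact in the appendix with no argument — so there is nothing to compare against; your write-up is the standard textbook proof and is sound.
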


\begin{theorem}[Fuk--Nagaev inequality{~\cite[Theorem 1.2]{Nag79},~\cite[Theorem 5.1 (ii)]{quentinnotes}\footnote{We were notified by Quentin Berger of a typo in the statement of Theorem 5.1 (ii). We are reporting here the correct statement.}}]\label{thm:fuknagaev}
	Let $X_1,\dots,X_N$ be i.i.d.~random variables such that \begin{align*}
	\P(X_1>t)=t^{-\gamma}
	\end{align*}
	for some $\gamma>1$ and let $\mu:=\E[X_1]$. Call $S_N:=\sum_{i=1,\dots,N}X_i$ and $M_N:=\max_{i=1,\dots,N}X_i$. Then there exists a  constant $c>0$ such that, for all $y\leq x$,
	\begin{align}\label{fuknagaev}
	\P(S_N-N\mu\geq x\,,\,M_N\leq y)\leq \Big(cN y^{1-\gamma}x^{-1}\Big)^{x/y}\,.
	\end{align} 
	
\end{theorem}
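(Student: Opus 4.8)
This is Nagaev's classical large--deviation bound, so the most economical ``proof'' is to invoke \cite{Nag79} (the streamlined form being \cite{quentinnotes}); what follows is the route I would take for a self--contained argument, which is clean precisely in the range $1<\gamma<2$ that is actually needed in this paper, since \eqref{fuknagaev} is only ever applied — in the proofs of Propositions~\ref{ratiopis} and \ref{ags} — with $1<\gamma<2$.

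The plan is to reduce to an upper--truncated sum and run an exponential Markov (Chernoff) estimate on it. First, on the event $\{M_N\le y\}$ every summand obeys $X_i\le y$, so $S_N=\sum_{i=1}^N \overline X_i$ with $\overline X_i:=X_i\1{X_i\le y}$, and since $\E[\overline X_1]\le\mu$ this gives
\[
\P\big(S_N-N\mu\ge x,\ M_N\le y\big)\ \le\ \P\Big(\sum_{i=1}^N\big(\overline X_i-\E[\overline X_1]\big)\ge x\Big).
\]
Then, using $1+u\le e^u$ from \eqref{exp_ineq} to re--centre the moment generating function, for every $\lambda>0$
\[
\P\Big(\sum_{i=1}^N\big(\overline X_i-\E[\overline X_1]\big)\ge x\Big)\ \le\ \exp\Big(-\lambda x+N\,\E\big[e^{\lambda\overline X_1}-1-\lambda\overline X_1\big]\Big),
\]
so everything reduces to controlling the centred log--MGF $\psi(\lambda):=\E[\,e^{\lambda\overline X_1}-1-\lambda\overline X_1\,]$ of the truncated weight.

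For that I would use that $\phi(u):=e^u-1-u$ satisfies that $u\mapsto\phi(u)/u^2=\sum_{k\ge 2}u^{k-2}/k!$ is nondecreasing on $(0,\infty)$: since $0\le\lambda\overline X_1\le\lambda y$ this yields $\phi(\lambda\overline X_1)\le\phi(\lambda y)\,\overline X_1^2/y^2$, hence $\psi(\lambda)\le (e^{\lambda y}-1-\lambda y)\,\E[\overline X_1^2]/y^2$. Because $\P(X_1>t)=t^{-\gamma}$, an elementary integration of the tail gives $\E[\overline X_1^2]\le c\,y^{2-\gamma}$ when $1<\gamma<2$, so $\psi(\lambda)\le c\,(e^{\lambda y}-1-\lambda y)\,y^{-\gamma}$. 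Inserting this into the Chernoff bound and optimising over $\lambda$ — the minimiser being $\lambda^\ast=\tfrac1y\log\!\big(1+\tfrac{x}{cNy^{1-\gamma}}\big)$ — and then simplifying with $\log(1+w)\ge\log w$ produces, after possibly enlarging the constant $c$,
\[
\P\big(S_N-N\mu\ge x,\ M_N\le y\big)\ \le\ \Big(\tfrac{cNy^{1-\gamma}}{x}\Big)^{x/y},
\]
which is \eqref{fuknagaev}; the constraint $y\le x$ plays no role beyond ensuring we are in the regime where the right--hand side is the informative one.

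The step I expect to be the real obstacle is the moment estimate on the truncated variable once $\gamma\ge 2$: there $\E[\overline X_1^2]$ is only of order $1$ (of order $\log y$ at $\gamma=2$), so the crude bound above delivers a factor $y^{-2}$ instead of the $y^{-\gamma}$ that the statement demands, and the inequality is genuinely stronger than what a finite--variance Bernstein estimate (Lemma~\ref{lemmabernstein}) gives. Recovering it requires keeping the large--jump contribution $\P(X_1>y)=y^{-\gamma}$ explicitly, for instance by truncating at two levels — a lower level handled by a variance/Bernstein argument and a higher one handled by counting how many summands can exceed the lower level — which is exactly the bookkeeping behind the Fuk--Nagaev theorem. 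For the statement in full generality I would therefore defer to \cite{Nag79,quentinnotes} rather than reprove it here.
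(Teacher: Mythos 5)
The paper does not prove Theorem~\ref{thm:fuknagaev} at all: it is stated as a borrowed black box, with the two references in the theorem header doing all the work, and the appendix only proves Proposition~\ref{prop:beb} (the Bernstein-type concentration) from scratch. So there is no ``paper's own proof'' to compare you against, and you are in effect supplying the argument the paper chose to outsource.

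That said, what you wrote is correct in the regime $1<\gamma<2$, and your observation that this is the only regime in which the paper invokes \eqref{fuknagaev} is also correct: Propositions~\ref{ratiopis} and~\ref{ags} apply it to the variables $W_x^{1/\alpha}$, whose tail exponent is exactly $\gamma=\alpha(\tau-1)$, and the whole of Sections~\ref{sec:importante}--\ref{sec:importante2} assumes $1<\gamma<2$ and $1<\tau<2$. Your chain of steps is sound: the reduction to $\overline X_i:=X_i\1{X_i\le y}$ and the inclusion $\{S_N-N\mu\ge x\}\cap\{M_N\le y\}\subseteq\{\sum_i(\overline X_i-\E[\overline X_1])\ge x\}$ is valid because $X_1\ge 1>0$ a.s.\ here, so $\E[\overline X_1]\le\mu$; the Chernoff bound with $\psi(\lambda)=\E[\phi(\lambda\overline X_1)]$ follows from $1+u\le e^u$; the monotonicity of $u\mapsto\phi(u)/u^2$ on $(0,\infty)$ plus $0\le\lambda\overline X_1\le\lambda y$ yields $\psi(\lambda)\le\phi(\lambda y)\E[\overline X_1^2]/y^2$; and $\E[\overline X_1^2]\le c\,y^{2-\gamma}$ for $1<\gamma<2$. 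The optimiser $\lambda^\ast=\tfrac1y\log(1+w)$ with $w=x/(cNy^{1-\gamma})$ gives exponent $\tfrac xy\bigl[1-\log(1+w)(1+\tfrac1w)\bigr]$, and since $\log(1+w)\ge\log w$ and $\log(1+w)/w\ge0$ this is at most $\tfrac xy[1-\log w]=\tfrac xy\log(e/w)$, which is precisely \eqref{fuknagaev} with the enlarged constant $ec$. Your diagnosis of where the argument stalls for $\gamma\ge 2$ (the truncated second moment no longer scales like $y^{2-\gamma}$, and one needs the genuine Fuk--Nagaev bookkeeping with two truncation levels) is also accurate and is exactly why the general statement is worth citing rather than reproving. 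One small caveat worth keeping in mind if you ever do need general $\gamma$ or signed summands: the monotonicity of $\phi(u)/u^2$ is only being used on $u\ge 0$, which is fine here because the $X_i$ are a.s.\ positive, but it would need care otherwise.
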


%The following proposition is used several times throughout the paper.
\begin{proposition}\label{prop:beb}
	For $N\in\N$, let $\{Z_{N,x}\}_{x=1,\dots,N}$ be a collection of $N$ positive independent random variables. Assume that there exists a random variable $A_N$ such that $Z_{N,x}\leq A_N<\infty\,$ a.s.~and such that
	\begin{align}\label{beb}
	\sum_{x=1}^{N}\E[Z_{N,x}^2]
		\ge{A_N^2(\log N)^2}\,.
	\end{align}
	Call  
	$Z_N:=\sum_{x=1}^NZ_{N,x}$ and $U_N:= \sqrt{\sum_{x=1}^{N}\E[Z_{N,x}^2]}\log N$. 
	For a constant $c>0$ not depending on $N$ one has
	\begin{align}\label{dun}
	\P(|Z_N-\E[Z_N]|>U_N)
		&\leq \e^{-c(\log N)^2}
	\end{align}
	so that, $\P$--a.s.~for all $N$ large enough,
	\begin{align}\label{newyork}
	\big|Z_{N}-\E[Z_{N}] \big|
		\leq \sqrt{\mbox{$\sum_{x=1}^{N}$}\E[Z_{N,x}^2]}\log N\,.
	\end{align}
\end{proposition}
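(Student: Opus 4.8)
The plan is to read off \eqref{dun} from Bernstein's inequality (Lemma~\ref{lemmabernstein}) applied to the centred summands, and then to get \eqref{newyork} from \eqref{dun} by the first Borel--Cantelli lemma, since $\exp\{-c(\log N)^2\}=N^{-c\log N}$ is summable in $N$.

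First I would remove the randomness of $A_N$. Write $V_N:=\sum_{x=1}^N\E[Z_{N,x}^2]$, a deterministic quantity, and note that hypothesis \eqref{beb} reads $V_N\ge A_N^2(\log N)^2$ with $V_N$ deterministic, hence $A_N\le \sqrt{V_N}/\log N$ almost surely; thus we may replace $A_N$ by the \emph{deterministic} constant $M:=\sqrt{V_N}/\log N$ and still have $0\le Z_{N,x}\le M$ a.s.\ for every $x$. Now set $X_x:=Z_{N,x}-\E[Z_{N,x}]$: the $X_x$ are independent, centred, and $|X_x|\le M$ a.s.\ (because $Z_{N,x}\in[0,M]$). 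Moreover $\sum_{x=1}^N\Var(X_x)=\sum_{x=1}^N\Var(Z_{N,x})\le V_N$, so in the notation of Lemma~\ref{lemmabernstein} we have $N\sigma^2\le V_N$, while the threshold of interest is $U_N=\sqrt{V_N}\,\log N$.

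Next, I would apply \eqref{bernstein} with $u=U_N$. Since $U_N^2=V_N(\log N)^2$ and, by the very choice of $M$, $MU_N/3=\tfrac13 V_N$, the denominator in \eqref{bernstein} is bounded by $2(V_N+\tfrac13 V_N)=\tfrac83 V_N$, giving $\P\big(\sum_x X_x\ge U_N\big)\le \exp\{-\tfrac38(\log N)^2\}$. Running the same estimate for $(-X_x)_x$ and taking a union bound yields \eqref{dun}, with the factor $2$ from the union bound absorbed into the constant $c$ for $N$ large. Finally $\sum_N\exp\{-c(\log N)^2\}<\infty$, so the first Borel--Cantelli lemma gives $|Z_N-\E[Z_N]|\le U_N$ a.s.\ for all $N$ large, i.e.\ \eqref{newyork}.

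The computation itself is short; the one point that deserves care is the observation that \eqref{beb} forces the boundedness constant $A_N$ to be (a.s.) dominated by the deterministic quantity $\sqrt{V_N}/\log N$, which is exactly what makes Bernstein's inequality --- stated for a fixed constant $M$ --- applicable. The choice $U_N=\sqrt{V_N}\,\log N$ of deviation level is tuned so that the ``variance'' term $N\sigma^2$ and the ``range'' term $MU_N/3$ in the denominator of \eqref{bernstein} are of the same order, which is what produces the clean $(\log N)^2$ exponent.
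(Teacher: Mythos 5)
Your proof is correct and follows essentially the same route as the paper: apply Bernstein's inequality with deviation level $U_N=\sqrt{V_N}\log N$, use \eqref{beb} to show that the ``range'' term $A_N U_N/3$ in the denominator is dominated by $V_N$, and conclude with the first Borel--Cantelli lemma. The one small refinement you add --- observing that \eqref{beb} forces $A_N\le\sqrt{V_N}/\log N$ a.s., so that the a priori random bound $A_N$ can be replaced by the deterministic constant $M=\sqrt{V_N}/\log N$ before invoking Bernstein --- is a welcome clarification that the paper leaves implicit.
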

\begin{proof}
	We compute
	\begin{align*}
	\P(Z_N>\E[Z_N]+U_N)
		&\stackrel{\eqref{bernstein}}{\leq} \exp\Big\{-\frac{U_N^2/2}{\sum_{x=0}^{N-1}\E[Z_{N,x}^2]+A_N U_N /3}\Big\}.
	\end{align*}
	Condition~\eqref{beb} implies that the last exponent is smaller than $-\tfrac 38(\log N)^2$. With the very same method a bound of the same order can be obtained for the probability of the event $\{Z_N<\E[Z_N]-U_N\}$, giving \eqref{dun}. It follows that
	\begin{align*}
	\sum_{N=1}^{\infty} \P(\big|Z_{N}-\E[Z_{N}] \big|
	\leq U_N)<\infty\,,
	\end{align*}
	and we can conclude with the first Borel-Cantelli lemma.
\end{proof}

\section{}
In this part of the Appendix we collect the technical proofs of some properties of our models.

\subsection{Proof of Proposition \ref{coscc}}\label{appe}
	For~\ref{coscc:i}, we notice that $\{\exists\, x\in\T_N:\,W_x\geq N^\alpha(\log N)^2\}\subseteq\{\sG_N\,\mbox{is fully connected}\}$ and calculate
\begin{align*}
\P(\not\exists x\in\T_N:\,W_x\geq N^\alpha(\log N)^2)
	=\P(W_1< N^\alpha(\log N)^2)^N
	\leq \e^{-N^\varepsilon}\,,
\end{align*}
for some $0<\varepsilon<1-\gamma\,$ that does not depend on $N$. Since the right-hand side~is  summable in $N$, the first Borel--Cantelli lemma implies the claim. %We are using here the fact that the graphs $G_N$ are sampled independently for every $N$, although not mentioned explicitly before.

\smallskip

To show \ref{coscc:ii} we uniformly bound the probability that an adge between any $x$ and $y$ is present in $\sG_N$ but not in $G_N$:
\begin{align*}
\P(x\stackrel{\sG_N }{\leftrightarrow}y \cap x\stackrel{G_N }{\not\leftrightarrow}y)
	\leq \P(x\stackrel{G_N }{\not\leftrightarrow}y\,|\, W_xW_y\geq N^\alpha(\log N)^2) 
	\leq \e^{-N^\alpha(\log N)^2/{\|x-y\|}^\alpha}
	\leq  \e^{-(\log N)^2}\,.
\end{align*}
So the probability that there exist $x,y\in \T_N$ such that  $\{x\stackrel{\sG_N }{\leftrightarrow}y\}$ but $\{x\stackrel{G_N }{\not\leftrightarrow}y\}$ is smaller than $N^2\e^{-(\log N)^2}$. This is a summable quantity in $N$, so the first Borel-Cantelli lemma ensures that this is not going to happen for $N$ large enough.
 
\smallskip

For equation \eqref{expecteddegreesimplified} in~\ref{coscc:iii} we simply calculate
\begin{align}
\E[\sD_1\,|\,W_1]
&=\E\Big[\sum_{x=2}^{N}\1{1\leftrightarrow x\mbox { {\small in} }\sG_N}\,\big|\,W_1\Big]
=(N-1)\P\big(W_2>N^\alpha (\log N)^2 W_1^{-1}\,|\,W_1\big)\nonumber
%		=(N-1)N^{-\gamma} W_0^{\tau-1}(\log N)^{-2(\tau-1)}\,.
\end{align}
which immediately gives the desired formula. For \eqref{expecteddegreeoriginal} we first bound (recall that nearest neighbours are always connected)
\begin{align*}
\E[D_1\,|\,W_1]-2
	&=\sum_{y=3}^{N-1}\int_1^\infty 1-\e^{-wW_1\|1-y\|^{-\alpha}}{c\,}w^{-\tau}{\rm d}w\\
	&\leq \sum_{y=3}^{N-1}\Bigg({c\,}\frac{W_1}{\|1-y\|^\alpha}\int_1^{\|1-y\|^\alpha W_1^{-1}\vee 1} w^{1-\tau}{\rm d}w
	+\P\big(W_y>\|1-y\|^\alpha W_1^{-1}\vee 1\big)\Bigg)
\end{align*}
where we have splitted in two the integral and have upper bounded the integrand of the first part with \eqref{exp_ineq} and the integrand of the second part by $1$. A simple calculation shows that, for a fixed $y$, both the summands in the last brackets are bounded by a constant times $\|1-y\|^{-\gamma}W_1^{\tau-1}$, and summing over all $y$'s gives \eqref{expecteddegreeoriginal}.

\smallskip

For~\ref{coscc:iv}, formulas \eqref{degreeconcentrationoriginal} and \eqref{degreeconcentrationsimplified} can be proved in a very similar way, so we just show the first one. Abbreviate $U_N(x):={ \E[D_x\,|\,W_x]}^{ 1/2} \log N$ and use a union bound to get
\begin{align}
\P\big(\exists x\in\T_N:\,	 \big | D_x - \E[D_x\,|\,W_x]\big| > U_N(x)\big)
&\leq N\,\E\Big[\P\Big( \big | D_1 - \E[D_1\,|\,W_1]\big| > U_N(1) \,\big|\,W_1\Big)\Big]\label{eq:previous_U_N}
\end{align}
We observe that $D_1$ is the sum over $x=2,\dots, N$ of the variables $Z_{N,x}:=\mathbbm{1}_{\{1\leftrightarrow x \}}\,$,
%\1{1\leftrightarrow x\mbox { {\small in} }\sG_N}$,  
which under  $\P(\,\cdot\,|\,W_1)$ are just $N-1$ independent Bernoullis. % \mic{with parameter 
%	$p=\P(W_1>k^{\alpha}(\log N)^2/W_0)=k^{-\gamma}(\log N)^{-2(\tau-1)}W_0^{\tau-1}$}. 
Since by~\ref{coscc:iii}
\begin{align*}
\sum_{x\neq 1}\E[Z_{N,x}^2\,|\,W_x]
=\E[D_x\,|\,W_x]\geq N^{\varepsilon}
\end{align*}
for some $0<\varepsilon<1-\gamma$, we can apply \eqref{dun} in Proposition \ref{prop:beb}  with $A_N=1$ to bound~\eqref{eq:previous_U_N}. 
We obtain 
\begin{align*}
\P\big(\exists x\in\T_N:\,	 \big | D_x - \E[D_x\,|\,W_x]\big| > U_N(x)\big)
	\leq N\e^{-c(\log N)^2}
\end{align*}
which is summable in $N$ and allows us to use the first Borel-Cantelli lemma to conclude.

\smallskip

For the second part of~\ref{coscc:v} we just integrate \eqref{expecteddegreesimplified}:
\begin{align*}
\E[\sD_{\sG_N}]
	&=N\E\Big[\E[\sD_1\,|\,W_1]\Big]
	=N\int_1^{\infty} \E[\sD_1\,|\,w] \,{c\,}w^{-\tau}\,{\rm d}w\\
	&={c}(N-1)N^{1-\gamma}(\log N)^{-2(\tau-1)}\int_1^{N^\alpha(\log N)^2}w^{-1}{\rm d}w +N(N-1)\P\big(W_1>N^\alpha(\log N)^2\big)
\end{align*}
and \eqref{ashp} follows.
We go back to the first equation of~\ref{coscc:v}. Using~\ref{coscc:iv} yields, $\P$-almost surely, for $N$ large enough,
\begin{align}\label{sciab}
\sD_{\sG_N}
	=\sum_{x\in\T_N}\sD_x
%&\leq \sum_{x\in\T_N} \E[\sD_x\,|\,W_x]+\sum_{x\in\T_N}\sqrt{ \E[\sD_x\,|\,W_x]}\log N\,.
	&\lesssim \sum_{x\in\T_N} \E[\sD_x\,|\,W_x]+\sum_{x\in\T_N}{ \E[\sD_x\,|\,W_x]}^{1/2}.
\end{align}
We would like to invoke Proposition \ref{prop:beb} with $Z_{N,x}=\E[\sD_x\,|\,W_x]$, which are mutually independent under $\P$, and $A_N=N$. Condition \eqref{beb} is satisfied since, using item \ref{coscc:ii}, for some $0<\varepsilon<1-\gamma$
\begin{align*}
\sum_{x\in\T_N}\E[Z_{N,x}^2]
\geq N \E[(N-1)^2\1{W_1>N^\alpha(\log N)^2}]
\geq N^{2+\varepsilon}
\end{align*}
which is larger than $A_N^2(\log N)^2=N^2(\log N)^2$. Hence we get that, for $N$ large enough,
\begin{align*}
\sum_{x\in\T_N} \E[\sD_x\,|\,W_x]
%	\leq  \E[\sD_{\sG_N}]+\sqrt{N\E\big[\E[\sD_x\,|\,W_x]^2\big]}\log N\,.
	\lesssim \E[\sD_{\sG_N}]+{N^{1/2}\E\big[\E[\sD_1\,|\,W_1]^2\big]^{1/2}}.
\end{align*}
With \eqref{expecteddegreesimplified} at hand, we can calculate
\begin{align*}
\E\big[\E[\sD_0\,|\,W_0]^2\big]
%	\leq \frac{N^{2-2\gamma}}{(\log N)^{4(\tau-1)}}\int_1^{N^\alpha(\log N)^2}W_0^{2(\tau-1)}\,{\rm d}\P+N^2\P(W_0>N^\alpha(\log N)^2)
%\leq N^{2-\gamma}\,,
	\lesssim {N^{2-2\gamma}}\int_1^{N^\alpha(\log N)^2}w^{2(\tau-1)}{c\,}w^{-\tau}\,{\rm d}w+N^2\P(W_1>N^\alpha(\log N)^2)
	\lesssim N^{2-\gamma}\,,
\end{align*}
thus
\begin{align}\label{UBoundSum}
\sum_{x\in\T_N} \E[\sD_x\,|\,W_x]
	\lesssim  \E[\sD_{\sG_N}]+ N^{ {(3-\gamma)}/2}\,.
%	\leq  \E[\sD_{\sG_N}]+ N^{\tfrac {3-\gamma}2}\log N\,.
\end{align}
We are left to deal with the last summand in \eqref{sciab}. We apply once more Proposition \ref{prop:beb}, this time with $Z_{N,x}=\E[\sD_x\,|\,W_x]^{1/2}$ and $A_N=N^{1/2}$. Condition \eqref{beb} is satisfied since $\sum_{x\in\T_N}\E[Z_{N,x}^2]=\E[\sD_{\sG_N}]$
which is larger than $A_N^2(\log N)^2=N(\log N)^2$ by \eqref{ashp}.  Therefore
\begin{align}\label{mari}
\sum_{x\in\T_N}{ \E[\sD_x\,|\,W_x]}^{1/2}
	\lesssim  N\E\big[{\E[\sD_1\,|\,W_1]}^{1/2}\big]+{\E[\sD_{\sG_N}]}^{1/2}
\end{align}
As before we bound
\begin{align*}
\E\big[{\E[\sD_1\,|\,W_1]}^{1/2}\big]
	\lesssim N^{\tfrac{1-\gamma}{2}}\int_1^{N^\alpha(\log N)^2}w^{\tfrac{\tau-1}{2}}{c\,}w^{-\tau}\,{\rm d}w+N^{ 1/2}\P(W_1>N^\alpha(\log N)^2)
	\lesssim N^{\tfrac{1-\gamma}{2}}\,.
%\leq \frac{N^{\tfrac{1-\gamma}{2}}}{(\log N)^{\tau-1}}\int_1^{N^\alpha(\log N)^2}W_0^{\tfrac{\tau-1}{2}}\,{\rm d}\mic{P_W}+N^{\tfrac 12}\P(W_0>N^\alpha(\log N)^2)
%\leq N^{\tfrac{1-\gamma}{2}}\,.
\end{align*}
Putting this last result back into \eqref{mari} and combining it together with \eqref{UBoundSum} into \eqref{sciab}, gives an upper bound of the desired order for $\sD_{\sG_N}$, for $N$ large enough. A lower bound can be obtained in a completely similar way, yielding \eqref{zmb}.

\subsection{Proof of Proposition \ref{prop:VjConcentra}}\label{dib} Recall the notation $Q=(\log N)^{\tau -1}$.
For item \ref{trky} we first compute, for $j=1,\dots,\jmax-1$,
\begin{align*}
\E[|V_j|]
	&= N\,\P(W_1\in[N^{\alpha/2}(\log N)^{j}, N^{\alpha/2}(\log N)^{j+1})\\
	&= N^{1-\gamma/2}Q^{-j}\big(1-Q^{-1}\big)
\end{align*}
and analogously
\begin{align*}
\E[|V_{j^c}|]=N^{1-\gamma/2}Q^{j-2}\big(1-Q^{-1}\big)\,,\qquad
\E[|V_{j}^+|]=N^{1-\gamma/2}Q^{-j-1},\qquad
\E[|V_{j^c}^+|]=N^{1-\gamma/2}Q^{j-3}.
\end{align*}
Now we claim that, for all $j=1,\,\ldots,\,\jmax-1$ and for all $A\in\{V_j,V_{j^c},V_j^+,V_{j^c}^+\}$, $\P$--a.s.~for $N$ large enough,
\begin{align}
&\big||A|-\E[|A|]\big|\leq { \E[|A|] }^{1/2}\log N\,.\label{rgr}%\\
%	&\Big||V_{j^c}|-\E[|V_{j^c}|]\Big|\leq \sqrt{ \E[|V_{j^c}|] }\log N\,.\label{D_large_bounds}
\end{align}
Writing
%
%	For $j=1,\,\ldots,\,\jmax-1$ and $A\in\{V_j,V_{j^c},V_j^+,V_{j^c}^+\}$   we write
 $|A|:=\sum_{x=1}^{N}\mathbbm{1}_{x\in A}$, we apply Proposition \ref{prop:beb} with $Z_{N,x}:=\mathbbm{1}_{x\in A}$ and $A_N=1$. Condition \eqref{beb} is satisfied since since
$$
\sum_{x=1}^{N}\E[Z_{N,x}^2]=\E[|A|]\geq (\log N)^2
$$ 
for all choices of $j$ and $A$, as can be easily checked. 
It follows that
\begin{align*}
\P(\mbox{For some $j$, }\exists A\in\{V_j,V_{j^c},V_j^+,V_{j^c}^+\}:\,|A-\E[|A|]|\geq \E[|A|]^{1/2}\log N) 
\leq 4\jmax \e^{-c(\log N)^2} 
\end{align*}
which is summable in $N$, and by the first Borel-Cantelli lemma \eqref{rgr} follows. To conclude, we put together \eqref{rgr} and the formulas for the expectations of $V_j,V_{j^c},V_j^+,V_{j^c}^+$. The case $j=\jmax$ can be treated in the very same way.

\smallskip

For item~\ref{via}, we just prove that $\sD_{V_j}>2\sD_{V_j^+}$ for any $j=1,\dots,\jmax-1$, since the proof for $\sD_{V_{j^c}}, \sD_{V_{j^c}^+}$ is very similar.
From \eqref{degreeconcentrationsimplified} we know that
\begin{align}
\sD_{V_j}
&=\sum_{x\in\T_N}\sD_x\mathbbm{1}_{x\in V_j}\geq \frac 12 \sum_{x\in\T_N}\E[\sD_x|W_x]\mathbbm{1}_{x\in V_j}\label{jwb}\\
\sD_{V_j^+}
&=\sum_{x\in\T_N}\sD_x \mathbbm{1}_{x\in V_j^+}\leq 2 \sum_{x\in\T_N}\E[\sD_x|W_x]\mathbbm{1}_{x\in V_j^+}\,.\label{itare}
\end{align}
almost surely for $N$ sufficiently large. Note that we neglect the error~$E[\overline D_x|W_x]^{1/2}$ appearing in~\eqref{degreeconcentrationsimplified} thanks to~\eqref{expecteddegreesimplified}.
For the first expression we use Proposition \ref{prop:beb} with $Z_{N,x}=\E[\sD_x|W_x]\mathbbm{1}_{x\in V_j}$. Using  \eqref{expecteddegreesimplified} one can check that, for $N$ large enough, $Z_{N,x}\leq A_N:=N^{1-{\gamma}/{2}}Q^{j}$.  Condition \eqref{beb} is verified since 
$$
\sum_{x=1}^N\E[Z_{N,x}^2]
	\geq N\, \E\Big[\E\big[\sD_1\,|\,W_1=N^{\alpha/2}(\log N)^{j}\big]^2\mathbbm{1}_{1\in V_j}\Big]
	\stackrel{\eqref{expecteddegreesimplified}}{\gtrsim} N^{3-\tfrac 32\gamma}Q^j
$$ 
which is larger than $A_N^2\log N$ %\simeq N^{2-\gamma}(\log N)^{2j(\tau-1)}$ %\leq N^{2-\tfrac 32\gamma}(\log N)^{j(\tau-1)}$ 
for all  $j< \jmax$. Since
\begin{align}\label{blcla}
\E[Z_N]
	%=\E[\sD_{V_j}]
	=\sum_{x=1}^N\E\Big[\E\big[\sD_x\,|\,W_x\big]\mathbbm{1}_{x\in V_j}\Big]
	&={c} N\int_{N^{\alpha/2}(\log N)^j}^{N^{\alpha/2}(\log N)^{j+1}}\E[\sD_1\,|\,W_1=w]\,w^{-\tau}\,{\rm d}w \nonumber\\
	&\stackrel{\eqref{expecteddegreesimplified}}{=}{c}  N^{1-\gamma}(N-1)Q^{-2}\log\log N\,,
\end{align}
which is much larger than $\sqrt{\sum_{x=1}^N\E[Z_{N,x}^2]}$,
\eqref{newyork} implies then that there exists some constant $c_1>0$ such that, $\P$--a.s.~for all $N$ large enough,
\begin{align}\label{pnlno}
\sD_{V_j}
	\stackrel{\eqref{jwb}}{\geq} \frac 12 Z_N\geq c_1\, N^{2-\gamma}Q^{-2}\log\log N\,.
\end{align}
Analogously, for the second expression in \eqref{itare} we use Proposition \ref{prop:beb} by posing $Z_{N,x}=\E[\sD_x|W_x]\mathbbm{1}_{x\in V_j^+}$. Condition \eqref{beb} is again verified since $\sum\E[Z_{N,x}^2]$ and 
$A_N^2\log N$ have the same orders as before.  So, by \eqref{newyork} and calculating $\E[\sD_{V_j^+}]$ as in \eqref{blcla},
\begin{align*}
\sD_{V_j^+}
	\leq 4 \E[\sD_{V_j^+}]
	\leq c_2\, N^{2-\gamma}Q^{-2}
\end{align*} 
for some $c_2>0$, $\P$--a.s.~for $N$ large enough. This together with \eqref{pnlno} concludes the argument.

\section*{Acknowledgments}

The authors would like to thank Noam Berger and Alexandre Stauffer for useful discussions. MS is also indebted to Quentin Berger, black belt of regularly varying functions, for helpful feedback and suggesting the proof of Lemma~\ref{okaramata}.

\smallskip

AC was supported by grant 613.009.102 of the Netherlands Organisation for Scientific Research (NWO), and MS by the French Research Agency (ANR) project ANR-16-CE32-0007-01 (CADENCE) and the MIUR
Departments of Excellence Program Math@Tov, CUP E83C18000100006.
MS acknowledges the hospitality of TU Delft where part of this work was carried out.

%%%%%% OLD CHERNOFF
%\section{Chernoff bound}
%\begin{theorem}[{\citet[Theorem 2.21]{Remco2016}}]\label{chernoff}Let $X_i\sim Be(p_i)$ be $n$ independent Bernoulli distributed random variables, and write $X:=\sum_{i=1}^n X_i$. Then for all $t\ge 0$
 %\begin{align*}
%  P\left(X\ge E[X]+t\right)&\le \exp\left(-\frac{t^2}{2(\E[X]+t/3)}\right),\\
%   P\left(X\le E[X]-t\right)&\le \exp\left(-\frac{t^2}{2\E[X]}\right).
% \end{align*}
%\end{theorem}

\bibliography{biblioCipSal}
\bibliographystyle{abbrvnat}

\end{document}